\theoremstyle{plain}
\newtheorem{theorem}[subsection]{Theorem}
\newtheorem{lemma}[subsection]{Lemma}
\newtheorem{proposition}[subsection]{Proposition}
\newtheorem{corollary}[subsection]{Corollary}
\theoremstyle{definition}
\newtheorem{definition}[subsection]{Definition}
\newtheorem{remark}[subsection]{Remark}
\newtheorem{example}[subsection]{Example}
\newcommand{\defn}{\textbf}
\newcommand{\comp}{\raisebox{0.2mm}{\ensuremath{\scriptstyle{\circ}}}}
\newcommand{\del}{\partial}
\newcommand{\To}{\Rightarrow}
\newcommand{\DefEq}{\coloneq} % {:=}
\newcommand{\EqDef}{\eqcolon} % {=:}
\newcommand{\links}{\lgroup}
\newcommand{\rechts}{\rgroup}
\newcommand{\Centr}{\ensuremath{\mathrm{Centr}}}
\renewcommand{\H}{\ensuremath{\mathrm{H}}}
\renewcommand{\hom}{\ensuremath{\mathrm{Hom}}}
\newcommand{\Ext}{\ensuremath{\mathrm{Ext}}}
\newcommand{\Nat}{\ensuremath{\mathrm{Nat}}}
\newcommand{\Ob}[1]{\ensuremath{|#1|}}
\newcommand{\R}{\ensuremath{\mathcal{R}}}
\newcommand{\op}{\ensuremath{\mathrm{op}}}
\newcommand{\A}{\ensuremath{\mathcal{A}}}
\newcommand{\C}{\ensuremath{\mathcal{C}}}
\newcommand{\F}{\ensuremath{\mathcal{F}}}
\newcommand{\Ell}{\ensuremath{\mathsterling}}
\newcommand{\X}{\ensuremath{\mathcal{X}}}
\newcommand{\Y}{\ensuremath{\mathcal{Y}}}
\newcommand{\T}{\ensuremath{\mathcal{T}}}
\newcommand{\Ab}{\ensuremath{\mathsf{Ab}}}
\newcommand{\ACat}{\ensuremath{\mathsf{ACat}}}
\newcommand{\CatExt}{\ensuremath{\mathsf{Ext}}}
\newcommand{\CExt}{\ensuremath{\mathsf{CExt}}}
\newcommand{\Gp}{\ensuremath{\mathsf{Gp}}}
\newcommand{\Lie}{\ensuremath{\mathsf{Lie}}}
\newcommand{\Mod}{\ensuremath{\mathsf{Mod}}}
\newcommand{\NatCat}{\ensuremath{\mathsf{Nat}}}
\newcommand{\SACat}{\ensuremath{\mathsf{SACat}}}
\newcommand{\ab}{\ensuremath{\mathrm{ab}}}
\newcommand{\CC}{\ensuremath{\mathrm{C}}}
\newcommand{\D}{\ensuremath{\mathrm{D}}}
\renewcommand{\L}{\ensuremath{\mathrm{L}}}
\newcommand{\XX}{\ensuremath{{X}}}
\newcommand{\YY}{\ensuremath{\mathsf{Y}}}
\newcommand{\ZZ}{\ensuremath{\mathbb{Z}}}
\newcommand{\from}{\colon}				% Colon in description of a function
\newcommand{\Lm}{\(}	% open inline math without extra space
\newcommand{\Rm}{\)}	% close inline math without extra space
\newcommand{\LM}{\hskip +.1em\(}	% extra space before inline math
\newcommand{\RM}{\) \hskip +.1em}	% extra space after inline math
\newcommand{\ZNr}[1][]{\mathbb{Z}^{#1}}		% integers [of dimension #1]
\newcommand{\IdMap}[1]{1_{#1}}		% Identity on #1
\newcommand{\Img}[1]{\text{\rm Im}(#1)}	% image of the map #1
\newcommand{\HomFnctr}[3]{\text{\rm Hom}_{#1}(#2,#3)} % Hom functor
\newcommand{\NatTrafos}[2]{\mathrm{Nat}(#1,#2)}	% set of natural transformations
\newcommand{\Ker}[1]{\mathrm{Ker}(#1)}			% Kernel of #1
\newcommand{\CoKer}[1]{\text{\rm Coker}(#1)}	% Cokernel of #1
\newcommand{\Prdct}[2]{#1\hskip -.1em\times\hskip -.1em #2} % #1 product #2
\newcommand{\CPrdct}[2]{#1 + #2}			% #1 coproduct #2
\newcommand{\AbCore}[1]{\mathsf{Ab}(#1)} % AbCore of #1
\newcommand{\AdjUnit}[1][]{\eta_{#1}}					% Unit of an adjunction
\newcommand{\AdjCUnit}[1][]{\varepsilon_{#1}}	% Unit of an adjunction
\newcommand{\Sets}{\mathsf{Set}}	 % Sets
\newcommand{\Grps}{\mathsf{Gp}}		% Category of groups
\newcommand{\AbGrps}{\mathsf{Ab}}	% Abelian Groups
\newcommand{\LModules}[1]{{}_{#1}\mathsf{Mod}} % left #1 modules
\newcommand{\ChnCat}{\mathsf{ch}} % Sketch of the category for chain complexes
\newcommand{\HmlgyKer}[2]{\H^{\mathrm{k}}_{#1}#2}							% Kernel construction of homology
\newcommand{\HmlgyCoKer}[2]{\H^{\mathrm{c}}_{#1}#2}					% CoKernel construction of homology
\newcommand{\CoHmlgyCoKer}[2]{\H_{\mathrm{c}}^{#1}#2}					% CoKernel construction of cohomology
\newcommand{\ThreeCat}{\langle 3\rangle} % 0 <-- 1 <-- 2
\newcommand{\TrsnCat}[1][]{\T_{#1}}	
\newcommand{\ExtFnctr}[4][]{\text{\rm Ext}_{#1}^{#2}(#3,#4)}	% Ext-functor
\newcommand{\LDrvd}[3]{\L_{#2}#1(#3)}													% #1-derived #2 applied to #3
\def\pullback{% with thanks to Valerian Even
 \ar@{-}[]+R+<6pt,-1pt>;[]+RD+<6pt,-6pt>%
 \ar@{-}[]+D+<1pt,-6pt>;[]+RD+<6pt,-6pt>}
\def\pushout{%
 \ar@{-}[]+L+<-6pt,1pt>;[]+LU+<-6pt,6pt>%
 \ar@{-}[]+U+<-1pt,6pt>;[]+LU+<-6pt,6pt>}
\def\splitpullback{%
 \ar@{-}[]+R+<6pt,-.51ex>;[]+RD+<6pt,-6pt>%
 \ar@{-}[]+D+<.51ex,-6pt>;[]+RD+<6pt,-6pt>}
\def\skewpullback{%
 \ar@{-}[]+LD+<-6pt,-6pt>;[]+LDD+<-6pt,-15.5pt>%
 \ar@{-}[]+D+<-1pt,-6pt>;[]+LDD+<-6pt,-15.5pt>}
\begin{document}
\author{George Peschke}
\author{Tim Van~der Linden}

\email{george.peschke@ualberta.ca}
\email{tim.vanderlinden@uclouvain.be}

\address{Dept.~of Mathematical and Statistical Sciences, University of Alberta, Edmonton AB, Canada T6T 2G1}
\address{Institut de Recherche en Math\'ematique et Physique, Universit\'e catholique de Louvain, chemin du cyclotron~2 bte~L7.01.02, 1348 Louvain-la-Neuve, Belgium}

\thanks{We thank the Banff International Research Station for Mathematical Innovation and Discovery (BIRS), Stellenbosch University, and the Institut de Recherche en Math\'ematique et Physique (IRMP) at the Universit\'e catholique de Louvain for their kind hospitality.}
\thanks{The second author is a Research Associate of the Fonds de la Recherche Scientifique--FNRS. He began studying some of the preliminary material on half-exactness of functors during a stay at York University in 2007 supported by Walter Tholen's NSERC research grant.}

\title[The Yoneda isomorphism commutes with homology]{The Yoneda isomorphism \\ commutes with homology}

\begin{abstract}
We show that, for a right exact functor from an abelian category to abelian groups, Yoneda's isomorphism commutes with homology and, hence, with functor derivation. Then we extend this result to semiabelian domains. An interpretation in terms of satellites and higher central extensions follows. As an application, we develop semiabelian (higher) torsion theories and the associated theory of (higher) universal (central) extensions.
\end{abstract}

\date{\today}

\keywords{Yoneda's lemma, semiabelian category, derived functor, extension}

\subjclass[2010]{Primary 18E99, 18G50; Secondary 18E25, 18E40, 20J05}

\maketitle

\section*{Introduction}
Yoneda's lemma establishes for any \LM \Sets\Rm-valued functor \LM {F\from \X\to \Sets}\RM and any object $U$ of $\X$ the bijection
$$
\NatTrafos{ \HomFnctr{\X}{ U }{ - } }{F} \longrightarrow FU,\quad \tau\mapsto \tau(\IdMap{U}).
$$
We refer to this bijection as the \defn{Yoneda isomorphism}~\cite[pp.~59ff.]{MacLane}, and note that it follows without any further assumptions from the structural axioms of categories. It therefore belongs to the very foundations of the theory of categories.

If $\X$ is enriched in the category of abelian groups \LM \AbGrps\Rm, and $F$ is additive and \LM \AbGrps\Rm-valued, then the Yoneda isomorphism is a morphism of abelian groups. This may be verified directly, or seen as a special case of results in~\cite{Kelly:Enriched}.

Extending work of Mac Lane~\cite{MacLane:Duality}, as a critical refinement of \LM \AbGrps\Rm-enriched categories, Buchsbaum~\cite{Buchsbaum:ExactCats} and Grothendieck~\cite{Tohoku} introduced abelian categories to provide a purely axiomatic foundation for homological algebra. The key result of the present paper establishes a \emph{homological} Yoneda isomorphism, Lemma~\ref{Lemma-Yoneda-Commutes-Homology}: For a chain complex~$C$ in an abelian category~$\A$, and a right exact functor $T$ from $\A$ to the category of abelian groups,
$$
\NatTrafos{\H^n \HomFnctr{\A}{ C }{ - } }{T} \xrightarrow{\quad \cong\quad} \H_nTC.
$$
This result belongs to the very foundations of abelian categories as it follows without any further assumptions from their structural axioms, and this is visible in its proof: Yoneda's lemma turns the claim into a tautology upon contemplating the self-dual nature of the concept of `chain complex' and what this means to `homology'.

The homological Yoneda isomorphism immediately implies results about the left derived functors of $T$ which go back as far as Yoneda~\cite{Yoneda-Exact-Sequences} and Hilton--Rees~\cite{Hilton-Rees}, see also~\cite{Hilton-Stammbach}: in particular, the existence of natural isomorphisms
$$
\NatTrafos{\ExtFnctr{n}{M}{-}}{T} \xrightarrow{\cong} \LDrvd{T}{n}{M}.
$$
Moreover, the homological Yoneda isomorphism is compatible with the connecting morphism in the long exact homology sequences associated to a short exact sequence of chain complexes; see Theorem~\ref{Theorem-Yoneda-Connecting-Map}.

The much younger concept of semiabelian categories was introduced in~\cite{Janelidze-Marki-Tholen}. It constitutes a synthesis between previous efforts toward an axiomatic foundation for homological algebra in the absence of enrichment in abelian groups---see, for instance,~\cite{Huq, Gerstenhaber, Orzech}---and recent discoveries in Categorical Algebra, such as the interplay between Barr exactness~\cite{Barr} and Bourn protomodularity~\cite{Bourn1991, Bourn2001}. Examples of semiabelian categories are the categories of groups, loops, rings, associative algebras, Lie algebras, crossed modules, and many others.

We show that the homological Yoneda lemma is also valid for (sequentially) right exact functors from a semiabelian category $\X$ to the category of abelian groups; see~\ref{Lemma-Yoneda-Iso-Commutes-Homology-Semiab}; see~\ref{def:Sequentially(R)EFunctor} for the definition of `sequentially right exact functor'. We view this as additional evidence of the ideal adaptedness of semiabelian categories to their intended purpose.

As an application of the homological Yoneda lemmas~\ref{Lemma-Yoneda-Commutes-Homology} and~\ref{Lemma-Yoneda-Iso-Commutes-Homology-Semiab} we develop a general theory of higher universal central extensions in a suitable semiabelian variety $\X$: A reflector $T$ from $\X$ to a subcategory $\R$ of its abelian core determines a family of object classes \LM \TrsnCat[0]\supseteq \TrsnCat[1]\supseteq \cdots \supseteq \TrsnCat[n]\supseteq \cdots\RM in~$\X$ as follows: An object~$X$ belongs to $\TrsnCat[n]$ if the left derived functors \LM \LDrvd{T}{i}{X}\RM vanish for \LM 0\leq i\leq n\Rm. We identify~\LM \TrsnCat[0]\RM as the torsion class of a (non-abelian) torsion theory. Accordingly, for \LM n\geq 1\Rm, the \LM \TrsnCat[n]\RM could be called \emph{higher torsion classes}. Via the homological Yoneda isomorphism~\ref{Lemma-Yoneda-Iso-Commutes-Homology-Semiab} we prove that $X$ belongs to \LM \TrsnCat[n]\RM if and only if \LM \ExtFnctr{0}{X}{-}=\cdots =\ExtFnctr{n}{X}{-}=0\Rm; see~\ref{theorem:XinT_i<->Ext^i(X,LY)=0}.

Now consider an object $X$ in \LM \TrsnCat[n-1]\Rm. The universal coefficient theorem~\ref{Universal coefficient theorem} establishes a natural equivalence of the functors
\[
\left(\HomFnctr{\R}{\LDrvd{T}{n}{X}}{-}\overset{\cong}{\implies} \ExtFnctr{n}{X}{-}|_\R\right)\from \R \longrightarrow \AbGrps.
\]
When $\R$ is an abelian subvariety of a semi-abelian variety $\X$ satisfying the so-called \emph{Smith is Huq} condition~\ref{SH}, recent work in~\cite{RVdL2} tells us that for $M$ in~$\R$, the group \LM \ExtFnctr{n}{X}{M}\RM is (naturally) isomorphic to the group of equivalence classes of central $n$-extensions under $M$ and over $X$. Hence Yoneda's lemma yields the existence of a universal central $n$-extension under \LM \LDrvd{T}{n}{X}\RM and over $X$; see~\ref{thm:UCEsExst}.

Thus we lift results from~\cite{Peschke-UE} about module categories to a semiabelian setting, and known results on one-fold central extensions~\cite{CVdL} to higher degrees. The outcome is new, even in the categories of groups and Lie algebras. An explicit construction of higher universal central extensions is given in~\ref{Construction}. Finally, we should point out that the definition of universality of a higher central extension is not a direct analogue of the classical one dimensional definition; see Section \ref{sec:UCEs} for details.

\subsection*{Organisation of the Paper}
We have endeavoured to make this paper as accessible as possible. For example, Sections~\ref{sec:Yoneda-Commutes-Homology-SemiAb} and~\ref{sec:UCEs} are written so that they have immediate and meaningful interpretations from both (1) the Barr--Beck view of cotriple derived functors~\cite{Barr-Beck} and (2) the view of Quillen's simplicial model categories via~\cite[II, Theorem 4]{Quillen} and~\cite{VdLinden:Simp}.

In Section~\ref{sec:Yoneda-Iso-Commutes-Homology-AbCase} we prove the homological Yoneda lemma and infer immediate consequences. In Section~\ref{sec:SACats} we collect background on semiabelian categories. In Section~\ref{sec:SequentiallyREFunctors} we establish a deeper link between the (large) $2$-category \LM \ACat\RM of abelian categories and right exact functors, and the (large) $2$-category \LM \SACat\RM of semiabelian categories and sequentially right exact functors: the `abelian core' is a pseudoreflector from the latter to the former; see Theorem~\ref{thm:ACat-Reflective-In-SACat}. In Section~\ref{sec:Yoneda-Commutes-Homology-SemiAb} we extend the results of Section~\ref{sec:Yoneda-Iso-Commutes-Homology-AbCase} to right exact functors from a semiabelian category to the category of abelian groups. In Section~\ref{sec:HigherExtensions} we outline the theory of higher central extensions in semiabelian categories. This enables us to (1) interpret the results of Section~\ref{sec:Yoneda-Commutes-Homology-SemiAb} in terms of satellites and (2) develop a theory of higher universal central extensions in Section~\ref{sec:UCEs}. In Appendix~\ref{sec:HomologyBackground} we present facts about chain complexes in abelian categories and their homology. While this material is elementary, the view offered here helps understand why Yoneda's isomorphism commutes with homology.

\subsection*{Acknowledgements} We are grateful for the referee's comments which have\linebreak helped make the paper clearer and more generally accessible. It is a pleasure to thank George Janelidze for adding his insight; see the passage `Added in proof' at the very end.

\section{Yoneda's Isomorphism Commutes with Homology: Abelian Case}
\label{sec:Yoneda-Iso-Commutes-Homology-AbCase}%

In this section we prove the homological Yoneda lemma~\ref{Lemma-Yoneda-Commutes-Homology}, and show that it is compatible with the connecting morphism in the long exact homology sequence of a short exact sequence of chain complexes: Theorem~\ref{Theorem-Yoneda-Connecting-Map}.

\begin{lemma}[Homological Yoneda lemma]%
\label{Lemma-Yoneda-Commutes-Homology}%
Let \LM T\from \A\to \AbGrps\RM be a right exact functor from an abelian category to the category of abelian groups, and let $C$ be a chain complex in~$\A$. If \LM n\in \ZNr\Rm, there is an isomorphism, natural in $C$ and in $T$,
$$
\YY_n(C)\from \NatTrafos{\H^n\hom_{\A}(C,-)}{T} \overset{\cong}{\longrightarrow} \H_nTC.
$$
\end{lemma}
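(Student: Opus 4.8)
The plan is to reduce the statement directly to the ordinary Yoneda lemma applied degreewise, by exploiting the self-dual description of homology. First I would recall from the appendix that a chain complex $C=(C_\bullet,\del_\bullet)$ in the abelian category $\A$ gives, for each $n$, a factorisation of $\del_n\from C_n\to C_{n-1}$ through its image; writing $Z_n$ for the kernel of $\del_n$ and $B_n$ for the image of $\del_{n+1}$, homology is $\H_n C = \Coker(B_n\rightarrowtail Z_n)$, while simultaneously $\H_n C$ is the kernel of the induced map $\Coker(\del_{n+1})\to \Coker(\del_n)$, i.e.\ the cohomology of the ``cochain complex'' obtained by applying $\Coker$ to consecutive differentials. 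The functor $T$ is right exact, so it preserves cokernels; applying $T$ degreewise to $C$ yields a chain complex $TC$ whose homology $\H_n TC$ can be computed either way.

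The key step is then the following. For a fixed object, the contravariant functor $\hom_\A(-,-)$ turns the chain complex $C$ into a cochain complex $\hom_\A(C,-)$ of $\AbGrps$-valued functors, and $\H^n\hom_\A(C,-)$ is by definition its $n$-th cohomology functor; concretely, $\H^n\hom_\A(C,-) = \Coker\bigl(\hom_\A(C_{n-1},-)\to \hom_\A(\ker\del_n,-)\bigr)$ is not quite right---rather one must be careful and use that $\hom_\A(-,Y)$ is left exact, so it sends the cokernel presentation of $\H_n C$ to a kernel presentation. The cleanest route: present $\H_n C$ in $\A$ as a finite (co)limit diagram built from the $C_i$ and the $\del_i$ using only kernels, cokernels and images; apply $\hom_\A(-,-)$, which sends this to the corresponding limit diagram of representable functors; recognise the resulting functor as $\H^n\hom_\A(C,-)$; and finally apply the ordinary (additive) Yoneda lemma, which identifies $\NatTrafos{\hom_\A(A,-)}{T}$ with $TA$ naturally in $A$ and $T$, and which moreover is exact in the sense that it carries the limit presentation of $\H^n\hom_\A(C,-)$ to the image of that same presentation under $T$. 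Since $T$ is right exact it preserves exactly the colimit part of that presentation, and one checks this is precisely what is needed to land on $\H_n TC$.

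Concretely I would proceed in these steps: (1) recall the two dual descriptions of $\H_n$ from Appendix~\ref{sec:HomologyBackground} and fix a presentation of $\H_n C$ as $\Coker(B_n\to Z_n)$ with $Z_n=\ker\del_n$; (2) apply the contravariant $\hom_\A(-,-)$ and use its left exactness to obtain $\H^n\hom_\A(C,-)$ as the equaliser/kernel of the relevant diagram of representables; (3) invoke the $\Ab$-enriched Yoneda lemma to replace $\NatTrafos{\hom_\A(A,-)}{T}$ by $TA$, naturally, and note that $\Nat(-,T)$ is left exact in its first (contravariant) variable, hence sends the kernel presentation from step (2) to a kernel in $\AbGrps$; (4) observe that this kernel, computed from the images under $T$ of the maps in the presentation, is exactly $\H_n TC$, where right exactness of $T$ is used to guarantee that $T$ of the image factorisation is again an image factorisation, so that ``$T\Coker = \Coker T$'' makes the dual computation of $\H_n TC$ go through; (5) assemble naturality in $C$ and in $T$ from the naturality already built into the Yoneda isomorphism and into the constructions of kernels and cokernels.

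The main obstacle I anticipate is bookkeeping in step (4): one genuinely needs that $T$ being merely \emph{right} exact suffices, i.e.\ that although $T$ need not preserve kernels, it does preserve the particular kernel computing $\H_n TC$ because that kernel is, dually, a cokernel---this is the content of the ``self-dual nature of homology'' remarked upon in the introduction, and making it precise requires carefully matching the kernel-presentation coming out of $\Nat(-,T)$ with the cokernel-presentation of $\H_n TC$. Everything else is a diagram chase with naturality, but this compatibility of the two presentations under a one-sided-exact functor is where the real work lies.
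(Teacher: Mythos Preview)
Your strategy is the paper's strategy---reduce to the ordinary Yoneda isomorphism via continuity of $\Nat(-,T)$ and $\hom_\A(-,-)$, then invoke right exactness of $T$ and the kernel/cokernel interchange for homology---but step~(2) as you describe it does not go through, and the difficulty you flag in step~(4) is really a symptom of this earlier confusion.

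The gap is this: applying $\hom_\A(-,-)$ to a (co)limit presentation of the \emph{object} $\H_n C$ in $\A$ produces (a presentation of) the functor $\hom_\A(\H_n C,-)$, not the functor $\H^n\hom_\A(C,-)$. These two functors are different in general; for instance when $C$ is a projective resolution of an object $M$, the former is the representable $\hom_\A(M,-)$ while the latter is $\Ext^n(M,-)$. So you cannot ``recognise the resulting functor as $\H^n\hom_\A(C,-)$''. You must instead analyse $\H^n\hom_\A(C,-)$ directly as a functor, and the crucial observation---which the paper makes explicit---is that it is a \emph{cokernel} (not a kernel) of a map between \emph{representable} functors. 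Namely, the cokernel description of cohomology gives
\[
\H^n\hom_\A(C,-)\;\cong\;\Coker\bigl(\hom_\A(C_{n-1},-)\longrightarrow \Ker d^{*}_{n+1}\bigr),
\]
and continuity of $\hom_\A$ in its first variable identifies $\Ker d^{*}_{n+1}\cong \hom_\A(\Coker d_{n+1},-)$. Now $\Nat(-,T)$, being continuous in its first variable, sends this cokernel to a kernel; Yoneda evaluates the two representables to $T\Coker d_{n+1}$ and $TC_{n-1}$; right exactness of $T$ gives $T\Coker d_{n+1}\cong \Coker Td_{n+1}$; and one lands on $\Ker(\Coker Td_{n+1}\to TC_{n-1})$, which is exactly the kernel description $\H^{\mathrm{k}}_n TC$. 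The snake interchange then matches this with the cokernel description. So the argument works precisely because the cokernel form of $\H^n\hom_\A(C,-)$ lines up, via $\Nat(-,T)$ and Yoneda, with the kernel form of $\H_n TC$; your step~(2) has these directions reversed and starts from the wrong object.
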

\begin{proof}
As in~\ref{subsec:Homology}, we use here both the kernel and the cokernel constructions of homology. Setting \LM \HomFnctr{}{\cdot}{-}\DefEq \HomFnctr{\A}{\cdot}{-}\Rm, we compute:
\[
\resizebox{\textwidth}{!}
{\mbox{$\begin{aligned}
\NatTrafos{\CoHmlgyCoKer{n}{\HomFnctr{}{C}{-}}}{T}
 & \cong \NatTrafos{\CoKer{\HomFnctr{}{ C_{n-1} }{ - } \to \Ker{d^{\ast}_{n+1}}}}{T} \\
 & \cong \Ker{\NatTrafos{ \Ker{d^{\ast}_{n+1}} }{T} \to \NatTrafos{ \HomFnctr{}{ C_{n-1} }{-} }{ T } } \\
 & \cong \Ker{ \NatTrafos{ \HomFnctr{}{ \CoKer{d_{n+1}} }{ - } }{T} \to \NatTrafos{ \HomFnctr{}{ C_{n-1} }{-} }{ T } } \\
 & \cong \Ker{ T\CoKer{d_{n+1}} \to TC_{n-1} } \\
 & \cong \Ker{ \CoKer{Td_{n+1}} \to TC_{n-1} } \\
 &= \HmlgyKer{n}{TC} \cong \HmlgyCoKer{n}{TC}
\end{aligned}$}}
\]
These isomorphisms exist for the following reasons (in order of appearance):
\begin{enumerate}[(1)]
\item the standard definition of cohomology of a cochain complex;
\item continuity of Nat in the first variable;
\item continuity of Hom in the first variable;
\item the ordinary Yoneda lemma applied twice;
\item right exactness of $T$;
\item the kernel definition of homology of a chain complex;
\item follows via the snake interchange argument in~\ref{subsec:Homology}. \qedhere
\end{enumerate}
\end{proof}

\begin{remark}
Lemma~\ref{Lemma-Yoneda-Commutes-Homology} may be viewed as a generalisation of Yoneda's lemma as follows: Given an object $M$ in $\A$, let \LM K(M,0)\RM be the chain complex with $M$ in position $0$, and zero objects everywhere else. Then
$$
\H^0 \HomFnctr{\A}{ K(M,0) }{ - }\cong \HomFnctr{\A}{ M }{ - } \quad\text{and}\quad \H_0TK(M,0)\cong TM.
$$
In this special case,~\ref{Lemma-Yoneda-Commutes-Homology} is the standard Yoneda lemma.
\end{remark}

\begin{theorem}\label{Theorem-Yoneda-Connecting-Map}
Let \LM T\from {\A\to \AbGrps}\RM be a right exact functor on an abelian category. Then every term-wise split short exact sequence of chain complexes in $\A$
\begin{equation}\label{Short-Exact-Sequence-of-Chain-Complexes}
\xymatrix{0 \ar[r] & A \ar@{{ |>}->}[r]^-{\alpha} & B \ar@{-{ >>}}[r]^-{\beta} & C \ar[r] & 0} \tag{$\star$}
\end{equation}
determines an isomorphism of long exact sequences of abelian groups:
\[
\resizebox{\textwidth}{!}
{\xymatrix@R=6ex@C=2em{
\NatTrafos{\H^{n+1}\HomFnctr{}{C}{-} }{ T } \ar@{}[rd]|-{(*)} \ar@<+10pt>[d]_{\cong}^{\YY_{n+1}(C)} \ar[r]^-{(\delta^{n+1})^\asterisk} &
 \NatTrafos{ \H^{n}\HomFnctr{}{A}{-} }{ T } \ar[r]^-{(\alpha^\asterisk)^\asterisk} \ar[d]_{\YY_n(A)}^{\cong} &
 \NatTrafos{ \H^{n}\HomFnctr{}{B}{-} }{ T } \ar[r]^-{(\beta^\asterisk)^\asterisk} \ar[d]_{\YY_n(B)}^{\cong} &
 \NatTrafos{ \H^{n}\HomFnctr{}{C}{-} }{ T } \ar@<-10pt>[d]_{\YY_n(A)}^{\cong} \\
\cdots \to \H_{n+1}{T}{C} \ar[r]_-{\partial_{n+1}} &
 \H_{n}{T}{A} \ar[r]_-{T\alpha} &
 \H_{n}{T}{B} \ar[r]_-{T\beta} &
 \H_{n}{T}{C}\to \cdots}}
\]
Moreover, this isomorphism is natural with respect to $T$ and morphisms of short exact sequences~\eqref{Short-Exact-Sequence-of-Chain-Complexes}.
\end{theorem}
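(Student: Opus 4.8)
The plan is to verify that every square in the displayed diagram commutes; exactness of the top row then follows formally from exactness of the bottom row (the standard long exact homology sequence of a term-wise split short exact sequence) together with the fact that all the vertical arrows are isomorphisms by Lemma~\ref{Lemma-Yoneda-Commutes-Homology}. Two preliminary remarks set the stage. First, a right exact functor between abelian categories preserves finite coproducts, hence biproducts, hence is additive; so $T$ is additive, and since~\eqref{Short-Exact-Sequence-of-Chain-Complexes} is term-wise split (this is exactly where term-wise splitness is needed, $T$ being only right exact, whereas \emph{every} functor preserves split short exact sequences), applying $T$ yields a term-wise split short exact sequence of chain complexes $0\to TA\to TB\to TC\to 0$ in $\AbGrps$, which carries the bottom long exact sequence with connecting morphism $\partial_\bullet$. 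Second, applying the contravariant functor $\hom_\A(-,-)$ degreewise to~\eqref{Short-Exact-Sequence-of-Chain-Complexes} yields a term-wise split short exact sequence of cochain complexes of functors $0\to\hom_\A(C,-)\to\hom_\A(B,-)\to\hom_\A(A,-)\to 0$; its long exact cohomology sequence has connecting morphism $\delta^\bullet$, and applying $\NatTrafos{-}{T}$ to this cohomology sequence produces exactly the top row.

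Of the two shapes of square, the ones involving the horizontal maps $(\alpha^\asterisk)^\asterisk$ and $(\beta^\asterisk)^\asterisk$ (respectively $T\alpha$, $T\beta$) are precisely the naturality squares of $\YY_n(-)$ along the chain maps $\alpha\from A\to B$ and $\beta\from B\to C$ --- note that $C\mapsto\NatTrafos{\H^n\hom_\A(C,-)}{T}$ is covariant in $C$, both $\hom_\A(-,-)$ and $\NatTrafos{-}{T}$ being contravariant --- so they commute by Lemma~\ref{Lemma-Yoneda-Commutes-Homology}. The substance of the proof is the square $(*)$ comparing the two connecting morphisms.

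To settle $(*)$ I would realise both connecting morphisms through a single honest chain map. Choose a term-wise splitting of~\eqref{Short-Exact-Sequence-of-Chain-Complexes}, that is, degreewise morphisms $r\from B\to A$ and $s\from C\to B$ with $r\alpha=1_A$, $\beta s=1_C$ and $\alpha r+s\beta=1_B$ (these are not chain maps), and let $\kappa$ be the degreewise morphism $r\circ d^B\circ s$, of degree $-1$, that is, $\kappa\from C\to\Sigma A$ where $(\Sigma A)_k=A_{k-1}$. A short computation using $r\alpha=1_A$, $\alpha r+s\beta=1_B$, $\beta s=1_C$, $(d^B)^2=0$ and the fact that $\alpha$ and $\beta$ are chain maps gives $d^A\circ\kappa=-\kappa\circ d^C$, so $\kappa$ is a chain map $C\to\Sigma A$, and the map it induces on homology is the connecting morphism of~\eqref{Short-Exact-Sequence-of-Chain-Complexes}. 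Applying the additive functor $T$, the morphisms $Tr$, $Ts$ split $0\to TA\to TB\to TC\to 0$ and $T\kappa=Tr\circ Td^B\circ Ts$, whence $\H_{n+1}(T\kappa)=\partial_{n+1}\from\H_{n+1}TC\to\H_nTA$. Dually, $\hom_\A(s,-)$ and $\hom_\A(r,-)$ split the short exact sequence of cochain complexes above, and the standard formula for the connecting morphism of a term-wise split short exact sequence of cochain complexes identifies $\delta^{n+1}$ with the map induced by $\hom_\A(\kappa,-)\from\hom_\A(\Sigma A,-)\to\hom_\A(C,-)$, under the degree-shift identification $\H^{n+1}\hom_\A(\Sigma A,-)\cong\H^n\hom_\A(A,-)$. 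Consequently, after the identifications $\H_{n+1}T\Sigma A\cong\H_nTA$ (using $T\Sigma A=\Sigma TA$) and $\H^{n+1}\hom_\A(\Sigma A,-)\cong\H^n\hom_\A(A,-)$, the square $(*)$ is exactly the naturality square of $\YY_{n+1}(-)$ along the chain map $\kappa\from C\to\Sigma A$, hence commutes by Lemma~\ref{Lemma-Yoneda-Commutes-Homology}; in particular the top row is exact and the whole diagram is an isomorphism of long exact sequences.

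Naturality in $T$ is part of Lemma~\ref{Lemma-Yoneda-Commutes-Homology} and is inherited by the whole diagram. Naturality with respect to a morphism of short exact sequences~\eqref{Short-Exact-Sequence-of-Chain-Complexes} follows by combining the naturality of $\YY_\bullet(-)$ in the chain-complex variable with the fact that the connecting morphisms $\delta^\bullet$ and $\partial_\bullet$ are independent of the chosen splitting and natural for such morphisms; the auxiliary data $r$, $s$ serve only to verify $(*)$ and do not appear in the statement. I expect the only real obstacle to be sign-and-index bookkeeping: confirming that the two incarnations of ``connecting morphism $=$ map induced by $\kappa$'' hold with matching sign conventions on the homology side and on the $\hom_\A(-,-)$-cohomology side, and that the naturality of $\YY_\bullet(-)$ in the chain complex from Lemma~\ref{Lemma-Yoneda-Commutes-Homology} applies verbatim to the shifted complex $\Sigma A$ (equivalently, that $\YY_{n+1}(\Sigma A)$ is identified with $\YY_n(A)$ under the degree shift). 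Everything else is formal.
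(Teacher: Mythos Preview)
Your proof is correct, but it takes a genuinely different route from the paper's own argument for the square $(*)$. You realise both connecting morphisms as the maps induced in (co)homology by an honest chain map $\kappa\colon C\to\Sigma A$ built from the chosen splittings, and then reduce $(*)$ to the naturality of $\YY_{\bullet}(-)$ along $\kappa$. This is the standard ``cone/shift'' device and works cleanly here because $T$ and $\hom_\A(-,-)$ are additive, so they send $\kappa$ to the corresponding $\kappa$ on the other side. The paper instead leans on the explicit diagrammatic construction of the connecting morphism developed in Appendix~\ref{sec:HomologyBackground}: there $\partial^{\mathrm{k}}$ is built from a certain pullback together with unique lifting/factorisation steps, and dually $\delta_{\mathrm{c}}$ from a pushout with dual steps. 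The point is that the very chain of isomorphisms used to prove Lemma~\ref{Lemma-Yoneda-Commutes-Homology}---Yoneda turning the cokernel description of $\H^{n}\hom(C,-)$ into the kernel description of $\H_n TC$---carries the pushout-based description of $\delta_{\mathrm{c}}^{n+1}$ over to the pullback-based description of $\partial^{\mathrm{k}}_{n+1}$, since $\Nat(-,T)$, being continuous, preserves all the limit-type ingredients involved. Your approach is more portable and avoids the appendix entirely; the paper's approach is more in keeping with its thesis that the whole result is ``Yoneda turning one description into its dual,'' and never needs to name a splitting or a shift. The sign/index bookkeeping you flag (matching conventions for $\Sigma$ and identifying $\YY_{n+1}(\Sigma A)$ with $\YY_n(A)$) is real but routine.
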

\begin{proof}
To begin let us note that the half exact functors \LM \HomFnctr{}{\cdot}{-}\DefEq\HomFnctr{\A}{\cdot}{-}\RM and $T$ turn the given split exact sequence of chain complexes into split exact sequences of (co\nobreakdash-)chain complexes. In view of Theorem~\ref{Lemma-Yoneda-Commutes-Homology} it only remains to show that the rectangle $(*)$ on the left commutes. The argument is an extension of the proof of Theorem~\ref{Lemma-Yoneda-Commutes-Homology}: While, for every chain complex $U$ in $\A$, Yoneda's lemma turns the (cokernel) description of~\LM \CoHmlgyCoKer{n}{ \HomFnctr{}{ U }{ - } }\RM into the (kernel) description of \LM \HmlgyKer{n}{TU}\Rm, it turns the corresponding description of the connecting morphism \LM (\delta^{n+1}_{\mathrm{c}})^{\ast}\RM associated to the short exact sequence of cochain complexes
$$
\xymatrix@C=1.5em{
0 &
 \HomFnctr{}{ A }{ - } \ar[l] &&
 \HomFnctr{}{ B }{ - } \ar@{-{ >>}}[ll]_-{\alpha^*} &&
 \HomFnctr{}{ C }{ - } \ar@{{ |>}->}[ll]_-{\beta^*} &
 0 \ar[l]
}
$$
into the description of connecting morphism \LM \partial_{n+1}^{\mathrm{k}}\from \HmlgyKer{n+1}{TC}\to \HmlgyKer{n}{TA}\Rm; see~\ref{subsec:Homology}. Equivalently, since the covariant functor \LM \NatTrafos{\cdot}{T}\from (\Ab^\A)^\op\to \Ab\RM preserves limits, it sends the kernel description of \LM (\delta^{n+1}_{\mathrm{c}})^{\op}\RM to the kernel description of \LM \del_{n+1}^{\mathrm{k}}\Rm. Either way, we see that the rectangle in question commutes. Naturality of this morphism of long exact sequences follows from the naturality of the operations involved.
\end{proof}

As an immediate corollary we obtain the classical formula of Yoneda~\cite{Yoneda-Exact-Sequences} and Hilton--Rees~\cite{Hilton-Rees} for left derived functors, plus its compatibility with the connecting homomorphism.

\begin{theorem}\label{Theorem-Yoneda-Derived-Connecting-Map}
Let $\A$ be an abelian category with enough projectives and let
\begin{equation*}
\xymatrix@R=5ex@C=1.5em{
0 \ar[r] &
 K \ar@{{ |>}->}[rr]^-{k} &&
 X \ar@{-{ >>}}[rr]^-{f} &&
 Y \ar[r] &
 0}
\end{equation*}
be a short exact sequence in $\A$. Then a right exact functor \LM T\from{\A\to \AbGrps}\RM yields the isomorphism of long exact sequences of abelian groups below.
\[
\resizebox{\textwidth}{!}
{\xymatrix@R=6ex@C=2em{
\NatTrafos{\ExtFnctr{n+1}{Y}{-} }{ T } \ar@<+10pt>[d]_{\cong}^{\YY_{n+1}(Y)} \ar[r]^-{(\delta^{n+1})^\asterisk} &
 \NatTrafos{ \ExtFnctr{n}{K}{-} }{ T } \ar[r]^-{(k^\asterisk)^\asterisk} \ar[d]_{\YY_n(K)}^{\cong} &
 \NatTrafos{ \ExtFnctr{n}{X}{-} }{ T } \ar[r]^-{(f^\asterisk)^\asterisk} \ar[d]_{\YY_n(X)}^{\cong} &
 \NatTrafos{ \ExtFnctr{n}{Y}{-} }{ T } \ar@<-10pt>[d]_{\YY_n(Y)}^{\cong} \\
\cdots \to \LDrvd{T}{n+1}{Y} \ar[r]_-{\partial_{n+1}} &
 \LDrvd{T}{n}{K} \ar[r]_-{k_\asterisk} &
 \LDrvd{T}{n}{X} \ar[r]_-{f_\asterisk} &
 \LDrvd{T}{n}{Y}\to \cdots}}
\]
\end{theorem}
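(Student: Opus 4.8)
The plan is to deduce the statement from Theorem~\ref{Theorem-Yoneda-Connecting-Map} by replacing the short exact sequence $0\to K\to X\to Y\to 0$ with an associated term-wise split short exact sequence of projective resolutions. Since $\A$ has enough projectives, the horseshoe lemma supplies projective resolutions $P_\bullet\to K$, $Q_\bullet\to X$ and $R_\bullet\to Y$ fitting into a short exact sequence of chain complexes
\[
\xymatrix{0\ar[r] & P_\bullet \ar@{{ |>}->}[r] & Q_\bullet \ar@{-{ >>}}[r] & R_\bullet \ar[r] & 0}
\]
which lies over $0\to K\to X\to Y\to 0$ and is split in each degree; in the horseshoe construction one even has $Q_n=P_n\oplus R_n$ literally.

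Next I would invoke the standard machinery of derived functors. As $P_\bullet$, $Q_\bullet$ and $R_\bullet$ are projective resolutions, there are natural isomorphisms $\ExtFnctr{n}{K}{-}\cong\H^n\HomFnctr{\A}{P_\bullet}{-}$ and $\LDrvd{T}{n}{K}\cong\H_nTP_\bullet$, and likewise for $X$ and $Y$. Under these identifications the horizontal maps $k^\asterisk$, $f^\asterisk$, $k_\asterisk$ and $f_\asterisk$ become the (co)homology maps induced by the chain maps $P_\bullet\to Q_\bullet$ and $Q_\bullet\to R_\bullet$, while the connecting morphisms $\delta^{n+1}$ and $\partial_{n+1}$ become the connecting morphisms of, respectively, the short exact sequence of cochain complexes $0\to\HomFnctr{\A}{R_\bullet}{-}\to\HomFnctr{\A}{Q_\bullet}{-}\to\HomFnctr{\A}{P_\bullet}{-}\to 0$ obtained by applying $\HomFnctr{\A}{-}{-}$ and the short exact sequence of chain complexes $0\to TP_\bullet\to TQ_\bullet\to TR_\bullet\to 0$ obtained by applying $T$. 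Both of these are again split in each degree, because $\HomFnctr{\A}{-}{-}$ and $T$ carry the degreewise splitting of $0\to P_\bullet\to Q_\bullet\to R_\bullet\to 0$ to a degreewise splitting.

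Applying Theorem~\ref{Theorem-Yoneda-Connecting-Map} to the term-wise split short exact sequence $0\to P_\bullet\to Q_\bullet\to R_\bullet\to 0$ and to $T$ then yields an isomorphism between the long exact $\NatTrafos{\H^n\HomFnctr{\A}{-}{-}}{T}$-sequence and the long exact $\H_nT(-)$-sequence of these resolutions, compatibly with all connecting morphisms. Transporting this isomorphism along the identifications of the previous paragraph produces exactly the diagram in the statement, with $\YY_n(Y)$ equal to the composite
\[
\NatTrafos{\ExtFnctr{n}{Y}{-}}{T}\cong\NatTrafos{\H^n\HomFnctr{\A}{R_\bullet}{-}}{T}\xrightarrow{\YY_n(R_\bullet)}\H_nTR_\bullet\cong\LDrvd{T}{n}{Y},
\]
and similarly for $K$ and $X$. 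Naturality with respect to $T$ is inherited from the corresponding naturality in Lemma~\ref{Lemma-Yoneda-Commutes-Homology}.

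The step I expect to be the main obstacle is the bookkeeping of the second paragraph: one must verify that the classical long exact sequences for $\Ext$ and for the left derived functors of $T$ genuinely coincide---connecting morphisms included---with the (co)homology long exact sequences of the chosen short exact sequences of (co)chain complexes, and that the composite defining $\YY_n(Y)$ is independent of the chosen resolution. The latter follows from the comparison theorem for projective resolutions together with the naturality of $\YY_n$ in its complex variable provided by Lemma~\ref{Lemma-Yoneda-Commutes-Homology}. None of this is deep, but it is exactly what turns Theorem~\ref{Theorem-Yoneda-Connecting-Map} into the asserted corollary.
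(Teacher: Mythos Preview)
Your proof is correct and follows exactly the paper's approach: the paper's entire proof reads ``Cover the given short exact sequence by a short exact sequence of projective chain complexes, and apply Theorem~\ref{Theorem-Yoneda-Connecting-Map}.'' Your version simply unpacks this sentence, making explicit the horseshoe lemma, the term-wise splitting, and the identifications needed to match the derived-functor long exact sequences with those of Theorem~\ref{Theorem-Yoneda-Connecting-Map}.
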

\begin{proof}
Cover the given short exact sequence by a short exact sequence of projective chain complexes, and apply Theorem~\ref{Theorem-Yoneda-Connecting-Map}.
\end{proof}

\begin{remark}[Enrichment in \LM \LModules{R}\Rm]
\label{rem:Yoneda-Iso-Commutes-Homology-RMod}%
Given a ring $R$, and a left exact functor \LM T\from \A\to \LModules{R}\Rm, we may equivalently regard $T$ as taking values in the category of (left $R$ and right $\ZNr$)-bimodules. Accordingly, the Yoneda isomorphism
$$
\NatTrafos{ \HomFnctr{\A}{ X }{ - } }{ T } \to TX
$$
is an isomorphism of $(R,\ZNr)$-bimodules. This bimodule compatibility passes through the proofs of the results in the current section.
\end{remark}

\section{Background on Semiabelian Categories}%
\label{sec:SACats}%

%\begin{georgeedit}
Extending work of Mac Lane \cite{MacLane:Duality}, abelian categories were designed in~\cite{Buchsbaum:ExactCats} and~\cite{Tohoku} as an axiomatic framework for classical homological algebra. From the 1960s on, several dissimilar seeming approaches aimed at generalizing this framework so as to include all varieties such as `groups', `Lie algebras', `loops', `non-unitary rings', etc.---see, for instance,~\cite{Huq, Gerstenhaber, Orzech}. In order to unify these early axiom systems, together with the results using them, the much younger concept of semiabelian categories was introduced in~\cite{Janelidze-Marki-Tholen}.
%
%\end{georgeedit}%

Such unification had become possible via recent developments in Categorical Algebra, in particular the concepts of Barr exactness~\cite{Barr} and Bourn protomodularity~\cite{Bourn1991, Bourn2001}. The result is a delicate balance between sacrificing enrichment in abelian groups, while preserving the validity of certain diagram lemmas whose use is essential in homological algebra. Examples include the Short Five Lemma, the \LM (3\times 3)\Rm-Lemma, and the Snake Lemma.

For a nice introductory survey to semiabelian categories we suggest~\cite{Borceux-Semiab, Bourn-Gran-CategoricalFoundations} and for a comprehensive treatment we recommend~\cite{Borceux-Bourn}. In the following we highlight some well-known facts which are essential to our purpose.

\subsection{The Abelian Core of a Semiabelian Category}
\label{subsec:AbCore}%
Whenever an object in a semiabelian category $\X$ admits an internal abelian group structure, this structure is necessarily unique. The collection of all abelian group objects in $\X$ determines a full subcategory \LM \AbCore{\X}\RM of $\X$, called the \defn{abelian core}. It is an abelian subcategory of~$\X$ which is reflective in $\X$. Its unit `abelianisation' consists of regular epimorphisms \LM {\AdjUnit[X]\from X\to \ab_{\X}(X)}\Rm. Using the treatment of cooperating monomorphisms in~\cite[p.~28ff]{Borceux-Bourn}, abelianisation may be constructed via the pushout diagram below.
$$
\xymatrix@R=5ex@C=3em{
\CPrdct{X}{X} \ar@{-{ >>}}[r]^-{\left\links\begin{smallmatrix}\IdMap{X} & 0\\
0&\IdMap{X}\end{smallmatrix}\right\rechts} \ar@{-{ >>}}[d]_-{\left\links\begin{smallmatrix}\IdMap{X}& \IdMap{X}\end{smallmatrix}\right\rechts} &
 \Prdct{X}{X} \ar@{-{ >>}}[d] \\
X \ar@{-{ >>}}[r]_-{\AdjUnit[X]} &
 \ab_{\X}(X) \pushout
}
$$

\begin{example}
\label{exa:AbCore(SAVariety)}
\cite[p.~106]{Freyd} The abelian core of a semiabelian variety~$\X$ is the category \LM \LModules{R}\Rm, where $R$ is the endomorphism ring of the abelianisation of the free object on a single generator.
\end{example}

\begin{lemma}\label{Lemma-YonedaRestriction}
Let \LM T\from \X\to \R\RM be a reflector on a semiabelian category $\X$ onto a full and replete subcategory $\R$ of its abelian core, and let \LM F\from \R\to \AbGrps\RM be an arbitrary functor. Then the isomorphism below is natural in $X$ and $F$.
$$
FT(X) \cong \NatTrafos{ \HomFnctr{\X}{X}{-}|_{\R} }{F}
$$
\end{lemma}
\begin{proof}
We have the natural equivalence
\[
\vartheta=(\vartheta_X^{\ast})_{X}\from \HomFnctr{\X}{TX}{-}|_{\R}\To \HomFnctr{\X}{X}{-}|_{\R},
\]
induced by the reflection unit \LM \vartheta_X\from X\to TX\Rm. Therefore,
\begin{align*}
FTX \cong &\; \NatTrafos{\HomFnctr{\R}{TX}{-}}{ F } \\
 = &\; \NatTrafos{ \HomFnctr{\X}{TX}{-}|_{\R}}{F} \\
 \cong &\; \NatTrafos{ \HomFnctr{\X}{X}{-}|_{\R}}{F}.\qedhere
\end{align*}
\end{proof}

\begin{example}[Groups]
In Lemma~\ref{Lemma-YonedaRestriction}, take \LM \X\DefEq \Grps\RM and \LM F\DefEq \ab|_{\AbGrps}\Rm, the restriction of `abelianisation' on \LM \Grps\RM to abelian groups. Then, for any group~$X$,
\[
\ab(X)\cong\NatTrafos{ \HomFnctr{\AbGrps}{\ab(X)}{-} }{ \IdMap{\AbGrps} } \cong \NatTrafos{\HomFnctr{\Grps}{X}{-}|_{\AbGrps} }{ \ab|_{\Ab}}.
\]
\end{example}

\subsection{Commutators}\label{SH}
To discuss central extensions we require commutators: we compute using the Higgins commutator~\cite{MM-NC}. The characterisation of central extensions on which Definition~\ref{def:CentralExtension} is based works if binary Higgins commutators suffice to express higher centrality. This happens~\cite{RVdL3} whenever the \emph{Smith commutator} of equivalence relations~\cite{Smith} agrees with the \emph{Huq commutator} of normal subobjects~\cite{Huq}. Semiabelian categories satisfying this \defn{Smith is Huq} condition \defn{(SH)} include pointed strongly protomodular varieties~\cite{Borceux-Bourn} and \emph{categories of interest}~\cite{Orzech}. So categories of groups, Lie algebras and associative algebras are examples.

\subsection{Construction of the Higgins commutator}%
\label{Higgins}%
Given two monomorphisms \LM k\from{K\to X}\RM and \LM l\from{L\to X}\Rm, form the short exact sequence below from the comparison map \LM K+L\RM and \LM K\times L\Rm:
$$
\xymatrix@R=5ex@C=3em{
0 \ar[r] &
 K\diamond L \ar@{.{ >>}}[d] \ar@{{ |>}->}[r] &
 \CPrdct{K}{L} \ar@{-{ >>}}[r]^-{\left\links\begin{smallmatrix}\IdMap{K} &
 0 \\
0 &
 \IdMap{L}\end{smallmatrix}\right\rechts} \ar[d]^-{\left\links\begin{smallmatrix}k& l\end{smallmatrix}\right\rechts} &
 \Prdct{K}{L} \ar[r] & 0 \\
& [K,L] \ar@{{ >}.>}[r] & X
}
$$
The image $[K,L]\to X$ of \LM K\diamond L\RM in $X$ is called the \defn{Higgins commutator} of~$K$ and $L$. For groups or Lie algebras this yields the familiar commutator subobject.

An object $X$ semiabelian category $\X$ is abelian precisely when \LM [X,X]=0\Rm, so that \LM \ab_{\X}(X)\cong \frac{X}{[X,X]}\Rm.

\subsection{Birkhoff Subcategories}
\label{subsubsec:BirkhoffSubCats}%
A \defn{Birkhoff subcategory} of~$\X$ is a full and replete reflective subcategory which is closed under subobjects and regular quotients~\cite{Janelidze-Kelly}. A~Birkhoff subcategory of a variety of universal algebras is the same thing as a subvariety in the Birkhoff sense. For example, the abelian core \LM \AbCore{\X}\RM of a semiabelian category $\X$ is always a Birkhoff subcategory.

\section{Sequentially Right Exact Functors}
\label{sec:SequentiallyREFunctors}

As is well known, a functor between abelian categories preserves cokernels exactly when it preserves finite colimits, and so either property can be taken as a definition of `right exact functor'. For functors between semiabelian categories, the two properties differ. Moreover, subobjects need not be normal. So it is not immediately clear how to extend the notion of `right exact functor' to semiabelian categories.

Guided by the objective to facilitate functor derivation, we introduce here the concept of `sequentially right exact functor' between semiabelian categories; see~\ref{def:Sequentially(R)EFunctor} below. As deeper evidence that it serves its purpose well, we offer Theorem~\ref{thm:ACat-Reflective-In-SACat} which asserts that any such functor commutes with abelianisation up to unique natural equivalence. --- This property is not shared by the competitors cokernel preserving, respectively finite colimits preserving functors.

Let us begin this development by recalling the following concepts:
\begin{enumerate}[(1)]
\item A morphism \LM u\from U\to V\RM in a semiabelian category is called \defn{proper} if its image is a kernel.
\item The diagram below is \defn{exact at $V$}
$$
\xymatrix@R=5ex@C=3em{
U \ar[r]^-{u} &
 V \ar[r]^-{v} &
 W
}
$$
if \LM \Ker{v} = \Img{u}\Rm. This implies, in particular, that $u$ is a proper morphism.
\item An \defn{exact sequence} is a sequence of morphisms which is exact in each position. A \defn{short exact sequence} is an exact sequence
\begin{equation*}\label{Short-Exact-Sequence}
\xymatrix{0 \ar[r] & K \ar@{{ |>}->}[r]^-{k} & X \ar@{-{ >>}}[r]^-{f} & Y \ar[r] & 0.}
\end{equation*}
\end{enumerate}

\begin{definition}[Sequentially (right) exact functor]
\label{def:Sequentially(R)EFunctor}%
A functor \LM T\from \X\to \Y\RM between semiabelian categories is called \defn{sequentially exact} if it preserves all short exact sequences. It is called \defn{sequentially right exact} if it turns an exact sequence, as on the left below, into the exact sequence on the right.
\begin{equation*}\label{Right-Exact-Functor}
\xymatrix{
U \ar[r]^-{u} &
 X \ar@{-{ >>}}[r]^-{f} &
 Y \ar[r] &
 0
}
\qquad\qquad
\xymatrix{
TU \ar[r]^-{Tu} &
 TX \ar@{-{ >>}}[r]^-{Tf} &
 TY \ar[r] &
 0
}
\end{equation*}
\end{definition}

A sequentially right exact functor is automatically \defn{proper} in the sense that it sends proper morphisms in $\X$ to proper morphisms in $\Y$. One key feature of sequentially right exact functors is the following lemma; note that preservation of finite colimits need not suffice for it to hold.

\begin{lemma}
\label{lem:SRE->PreservesFiniteProducts}
A sequentially right exact functor between semiabelian categories preserves finite products.
\end{lemma}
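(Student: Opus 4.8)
The plan is to exploit the fact that in a semiabelian category the terminal object is also initial (the category is pointed), so a finite product $X \times Y$ can be reconstructed from the binary coproduct $X + Y$ together with the canonical maps to $X$ and to $Y$. Concretely, there is a short exact sequence
\[
\xymatrix{
0 \ar[r] & X \diamond Y \ar@{{ |>}->}[r] & X + Y \ar@{-{ >>}}[r]^-{\left\links\begin{smallmatrix}\IdMap{X} & 0\\ 0 & \IdMap{Y}\end{smallmatrix}\right\rechts} & X \times Y \ar[r] & 0,
}
\]
expressing $X \times Y$ as a regular quotient of $X + Y$ (this is the same comparison map used in~\ref{Higgins}, and the canonical map $X + Y \to X \times Y$ is always a regular epimorphism in a semiabelian category). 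So the strategy is: first show a sequentially right exact functor $T$ preserves binary coproducts, then show it preserves this particular cokernel, and conclude it preserves binary products. Preservation of the terminal object is immediate since $T$ is pointed (it preserves the short exact sequence $0 \to 0 \to 0 \to 0 \to 0$, hence preserves the zero object), and a functor preserving binary products and the terminal object preserves all finite products.

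First I would check that $T$ preserves binary coproducts. For objects $X$ and $Y$, both coproduct inclusions $X \to X+Y$ and $Y \to X+Y$ are split monomorphisms, hence in particular the sequence $X \to X+Y \to Y$ (using the inclusion of $X$ and the codiagonal-type projection $X+Y \to Y$ killing $X$) is a short exact sequence — indeed it is split. Applying the sequentially exact (a fortiori sequentially right exact functors preserve split short exact sequences, since these are exact sequences with a zero on the right, and the splitting survives any functor) behaviour of $T$, one gets that $TX \to T(X+Y) \to TY$ is split exact, and symmetrically for the other inclusion; a standard argument then identifies $T(X+Y)$ with $TX + TY$ via the comparison map. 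Alternatively, and perhaps more cleanly, one observes that $X + Y$ is itself the pushout of $0 \leftarrow X \to$, no — better to stay with the split exact sequence argument, which only uses that $T$ preserves the zero object and sends split epis with split exact kernel data to the same.

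Next I would feed the displayed short exact sequence through $T$. Since $T$ is sequentially right exact and $X + Y \to X \times Y$ is a regular epimorphism, $T$ sends
\[
\xymatrix{
X \diamond Y \ar[r] & X + Y \ar@{-{ >>}}[r] & X \times Y \ar[r] & 0
}
\]
to an exact sequence
\[
\xymatrix{
T(X\diamond Y) \ar[r] & T(X+Y) \ar@{-{ >>}}[r] & T(X\times Y) \ar[r] & 0,
}
\]
so $T(X \times Y)$ is the cokernel of $T(X \diamond Y) \to T(X+Y)$, i.e. the coequalizer identifying the two composites. Using the identification $T(X+Y) \cong TX + TY$ from the previous step, and the fact that $X \diamond Y$ is by construction the kernel of $X+Y \to X \times Y$ with the composite $X \diamond Y \to X + Y \to X$ and $X \diamond Y \to X + Y \to Y$ both zero, one matches this cokernel with the corresponding presentation of $TX \times TY$ as a quotient of $TX + TY$. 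The comparison morphism $T(X \times Y) \to TX \times TY$ induced by $T(\mathrm{pr}_1)$ and $T(\mathrm{pr}_2)$ is then seen to be the induced map between two quotients of $TX + TY$ by the images of $T(X \diamond Y)$ and $(TX) \diamond (TY)$ respectively, and one checks these images coincide.

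The main obstacle I anticipate is precisely this last matching step: one must verify that, under the isomorphism $T(X+Y) \cong TX+TY$, the image of $T(X\diamond Y)$ inside $TX+TY$ is exactly $(TX)\diamond(TY)$, the kernel of $TX + TY \to TX \times TY$. One containment is formal — the composites $T(X\diamond Y) \to TX+TY \to TX$ and $\to TY$ vanish because $T$ preserves the zero morphism, so the image lands in the kernel $(TX)\diamond(TY)$ — but the reverse containment, that the image is all of it, is what genuinely needs sequential right exactness (and is exactly where mere preservation of finite colimits would fail). The cleanest way to get it is to note that exactness of $T(X\diamond Y) \to T(X+Y) \to T(X\times Y) \to 0$ forces the image of $T(X\diamond Y)$ to be the \emph{whole} kernel of $T(X+Y) \to T(X\times Y)$; so once one knows $T(X+Y) \to T(X \times Y)$ corresponds under the coproduct isomorphism to $TX + TY \to TX \times TY$ (which follows by naturality of the comparison maps, chasing the two projections), its kernel is $(TX) \diamond (TY)$ by definition, and we are done.
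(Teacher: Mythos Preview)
Your argument has a genuine error at the first step. You claim that the sequence
\[
X \xrightarrow{\iota_X} X+Y \xrightarrow{[0,1_Y]} Y
\]
is short exact (split) in a semiabelian category. It is not. Take $\X=\Gp$: the coproduct is the free product $X*Y$, and the kernel of the projection $X*Y\to Y$ is the \emph{normal closure} of $X$ in $X*Y$, which is strictly larger than $X$ whenever both $X$ and $Y$ are nontrivial. In other words, the coproduct inclusion $X\to X+Y$ is a split monomorphism but not a \emph{normal} monomorphism, so no exact sequence of the form you wrote down exists. Consequently you cannot conclude that $T$ preserves binary coproducts, and the rest of the argument---which relies on the identification $T(X+Y)\cong TX+TY$---collapses. (There is also a lurking circularity in your last paragraph: identifying the map $T(X+Y)\to T(X\times Y)$ with $TX+TY\to TX\times TY$ already presupposes that the product comparison $T(X\times Y)\to TX\times TY$ is an isomorphism.)

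The paper bypasses coproducts entirely. It applies the definition of sequential right exactness directly to the genuinely exact sequence
\[
U \xrightarrow{\langle 1_U,0\rangle} U\times Y \xrightarrow{\pi_Y} Y \to 0,
\]
whose left map \emph{is} the kernel of $\pi_Y$. After applying $T$, the map $T\langle 1_U,0\rangle$ is a proper split monomorphism and $T\pi_Y$ a split regular epimorphism; the Short Five Lemma then forces the comparison $T(U\times Y)\to TU\times TY$ to be an isomorphism. This is both shorter and avoids the non-normality issue.
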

\begin{proof}
In Definition~\ref{def:Sequentially(R)EFunctor} let $X$ be the product of $U$ and $Y$, $u=\langle 1_{U},0\rangle$ and $f=\pi_{Y}$. Then $Tu$ is a proper split monomorphism, which implies that $TX\cong TU\times TY$ by the Short Five Lemma.
\end{proof}

\subsection{The $2$-Category \LM \SACat\Rm}
\label{subsec:SACat}%
Here we introduce our work environment: let \LM \SACat\RM be the (large) $2$-category with objects semiabelian categories, morphisms sequentially right exact functors and $2$-cells natural transformations between them. It contains \LM \ACat\Rm, the (large) $2$-category of abelian categories and right exact functors as a full and replete subcategory. The following Theorem~\ref{thm:ACat-Reflective-In-SACat} binds \LM \SACat\RM to \LM \ACat\RM in a manner which is fundamental for our purposes.

\begin{theorem}
\label{thm:ACat-Reflective-In-SACat}
The $2$-category \LM \ACat\RM is pseudo-reflective~\cite{Fiore} in \LM \SACat\RM via the `abelian core' functor
$$
\AbCore{-}\from \SACat \longrightarrow \ACat .
$$
Its unit at a semiabelian category $\X$ is the reflector \LM\ab_{\X}\from {\X \to \AbCore{\X}}\RM as in~\ref{subsec:AbCore}.
\end{theorem}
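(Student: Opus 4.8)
The plan is to verify the universal property of a pseudo-reflection directly, using the $2$-categorical notion from \cite{Fiore}: for every semiabelian category $\X$, every abelian category $\A$, and every sequentially right exact functor $G\from \X\to \A$, the functor $G$ factors as $G'\comp\ab_{\X}$ for a right exact functor $G'\from \AbCore{\X}\to\A$, essentially uniquely; and this factorisation is suitably functorial in the $2$-cells. First I would record the easy half: since $\AbCore{\X}$ is a Birkhoff subcategory of $\X$ (see~\ref{subsubsec:BirkhoffSubCats}), the inclusion $\AbCore{\X}\hookrightarrow\X$ is an honest morphism in $\SACat$, and $\ab_{\X}$ sits over it as the reflection unit from~\ref{subsec:AbCore}; so the assignment $\X\mapsto\AbCore{\X}$ at least has the right shape, and $\ACat$ is visibly full and replete in $\SACat$ as already noted.

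Next I would construct the extension $G'$ of a given $G\from\X\to\A$. The candidate is the restriction $G'\DefEq G|_{\AbCore{\X}}$, so that $G'\comp\ab_{\X}$ receives a natural transformation from $G$ via $G(\AdjUnit[X])\from GX\to G\ab_{\X}(X)=G'\ab_{\X}(X)$. The crux is that this natural transformation is an \emph{isomorphism}: this is exactly the assertion flagged in the prose that a sequentially right exact functor commutes with abelianisation up to unique natural equivalence. To prove it I would apply $G$ to the defining pushout square for $\ab_{\X}(X)$ in~\ref{subsec:AbCore}. Because $G$ is sequentially right exact it preserves finite products by Lemma~\ref{lem:SRE->PreservesFiniteProducts}, hence it sends $\CPrdct{X}{X}$ and $\Prdct{X}{X}$ to $\CPrdct{GX}{GX}$ and $\Prdct{GX}{GX}$ with the expected comparison maps; and being proper and cokernel-preserving along regular epimorphisms (all four maps in the square are regular epis) it sends that pushout of regular epimorphisms to the corresponding pushout in $\A$. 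Since $\A$ is abelian, that pushout computes $G\ab_{\X}(X)\cong\coker\bigl(\langle1,1\rangle-\langle1,0\rangle-\langle0,1\rangle\bigr)$ from $GX$, i.e. the abelianisation $\ab_{\A}(GX)$; but $GX$ is already an object of the abelian category $\A$, so $\ab_{\A}(GX)\cong GX$ and $G(\AdjUnit[X])$ is an isomorphism. Once $G'\comp\ab_{\X}\cong G$ is established, uniqueness of $G'$ up to unique isomorphism is forced: any $H\from\AbCore{\X}\to\A$ with $H\comp\ab_{\X}\cong G$ satisfies $H\cong H\comp\ab_{\X}|_{\AbCore{\X}}\cong G|_{\AbCore{\X}}=G'$, using that $\ab_{\X}$ restricted to $\AbCore{\X}$ is naturally isomorphic to the identity.

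Finally I would address the $2$-dimensional part of the pseudo-reflection: a natural transformation $\tau\from G\To H$ between sequentially right exact functors $\X\to\A$ must induce a unique $\tau'\from G'\To H'$ with $\tau'\comp\ab_{\X}$ corresponding to $\tau$ under the established isomorphisms, and this assignment must respect vertical and horizontal composition. Here $\tau'$ is simply the restriction $\tau|_{\AbCore{\X}}$, and compatibility is a routine check once one traces through the naturality squares of $\tau$ against $\AdjUnit$; pseudo-functoriality of $\AbCore{-}$ and the coherence of the unit $\ab_{\X}$ then follow formally. I expect the main obstacle to be the isomorphism $G(\AdjUnit[X])\colon GX\xrightarrow{\cong} G\ab_{\X}(X)$, specifically justifying that a sequentially right exact $G$ carries the abelianisation pushout to the analogous pushout in $\A$; this needs the properness of $G$ together with the fact that all maps in the square are regular epimorphisms (so that "sequentially right exact" genuinely bites), and it is the only place where the hypothesis is used in an essential, non-formal way. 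The remaining bookkeeping — fullness and repleteness of $\ACat$ in $\SACat$, uniqueness, and $2$-cell coherence — is formal.
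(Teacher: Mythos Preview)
Your overall strategy matches the paper's: reduce the pseudo-reflection to showing that a sequentially right exact $G\from\X\to\A$ satisfies $G(\AdjUnit[X])\from GX\to G\ab_{\X}(X)$ is an isomorphism, by applying $G$ to the abelianisation pushout in~\ref{subsec:AbCore} and comparing with the abelianisation pushout of $GX$ in $\A$. The $2$-cell bookkeeping and uniqueness arguments you sketch are fine.

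The gap is in the key step. You assert that $G$ ``sends $\CPrdct{X}{X}$ and $\Prdct{X}{X}$ to $\CPrdct{GX}{GX}$ and $\Prdct{GX}{GX}$''. Product preservation gives the second, but nothing you have at this point gives $G(\CPrdct{X}{X})\cong\CPrdct{GX}{GX}$: sequential right exactness does \emph{not} a~priori yield preservation of binary coproducts (indeed, the corollary immediately following the theorem establishes finite-colimit preservation only \emph{as a consequence} of~\ref{thm:ACat-Reflective-In-SACat}, so invoking it here would be circular). Without that identification, your subsequent claim that the image pushout ``computes $\ab_{\A}(GX)$'' does not go through, and the displayed cokernel formula does not parse as stated. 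The paper avoids this by never claiming $T(\CPrdct{X}{X})\cong\CPrdct{TX}{TX}$. Instead it (i) argues directly that $T$ applied to the abelianisation pushout is again a pushout, because right exactness makes the induced map between the kernels of the horizontal arrows a regular epimorphism; and (ii) compares the two pushouts via the canonical morphism $\sigma\from\CPrdct{TX}{TX}\to T(\CPrdct{X}{X})$, which admits a left inverse precisely because finite sums and products coincide in the abelian target and $T$ preserves products. That split is what replaces your unjustified coproduct-preservation step; once you insert it, your argument becomes the paper's.
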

\begin{proof}
Checking that for any abelian category $\A$, the functor
\[
\ab_{\X}^{*}=\NatCat(\ab_{\X},\A)\colon \NatCat(\Ab(\X),\A)\to \NatCat(\X,\A)
\]
defined by composition with $\ab_{\X}$ is an equivalence of categories amounts to proving that any sequentially right exact functor \Lm T\from {\X\to \Y}\RM in $\SACat$ commutes with abelianisation up to unique natural isomorphism: $\Ab(T)\comp \ab_{\X}\cong\ab_{\Y}\comp T$.

We know from~\ref{lem:SRE->PreservesFiniteProducts} that $T$ commutes with finite products. So it preserves the structure diagrams for an abelian group object, hence sends abelian group objects in $\X$ to abelian group objects $\Y$.

\emph{Step 1: suppose $T$ takes values in an abelian category.} Given an object $X$ in $\X$, consider the commutative diagram below.
$$
\xymatrix@R=5ex@C=3em{
& T(\CPrdct{X}{X}) \ar@{-{ >>}}[rr]^-{T\left\links\begin{smallmatrix}\IdMap{X} & 0\\
0&\IdMap{X}\end{smallmatrix}\right\rechts} \ar@{-}[d]^-{T\left\links\begin{smallmatrix}\IdMap{X}& \IdMap{X}\end{smallmatrix}\right\rechts} \ar@{-{ >>}}[ld] &&
 T(\Prdct{X}{X}) \ar@{-{ >>}}[dd] \\
\CPrdct{(TX)}{(TX)} \ar@{-{ >>}}[rr]_(.7){\cong} \ar@{-{ >>}}[dd] \ar@<5pt>[ru]^{\sigma} &
 {\ } \ar@{-{ >>}}[d] &
 \Prdct{(TX)}{(TX)} \ar@{-{ >>}}[dd] \ar[ru]_{\cong} \\
& TX \ar@{-}[r] & {\ } \ar@{-{ >>}}[r]^-{\AdjUnit[X]} &
 \pushout T(\ab_{\X}(X)) \\
TX \ar@{-{ >>}}[rr]_-{\AdjUnit[TX]}^-{\cong} \ar@{=}[ru] &&
\pushout \ab_{\Y}{(TX)} \ar[ru]_{\phi}
}
$$
The front face is the abelianisation pushout of \LM TX\RM in $\Y$. The back face results from applying $T$ to the abelianisation pushout of $X$ in $\X$. This is a pushout as well because right exactness of $T$ yields a regular epimorphism between the kernels of the horizontal morphisms. The morphism $\sigma$ is the canonical one. It has a left inverse because finite sums and products coincide in \LM \AbCore{\Y}\Rm. Consequently, $\phi$ is an isomorphism as required.

\emph{Step 2: \Lm T\from \X\to \Y\RM is arbitrary in \LM \SACat\Rm.} Then \LM \ab_{\Y}\comp T\from \X\to \AbCore{\Y}\RM is a functor in \LM \SACat\RM to which we may apply Step 1. Thus the claim follows.
\end{proof}

\begin{corollary}
A functor \LM T\from \X\to \A\RM from a semiabelian category to an abelian category is sequentially right exact if and only if it commutes with finite colimits.
\end{corollary}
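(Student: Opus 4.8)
The plan is to prove the two implications separately, exploiting the fact that an abelian category is a special case of a semiabelian one, so both directions essentially reduce to statements already established.

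First I would handle the direction ``commutes with finite colimits $\implies$ sequentially right exact''. Suppose $T\from\X\to\A$ preserves finite colimits; in particular it preserves cokernels and the initial object, hence zero morphisms. Given an exact sequence $U\xrightarrow{u}X\xrightarrow{f}Y\to 0$ in $\X$, exactness at $Y$ says that $f$ is a regular epimorphism with $\Ker f=\Img u$, i.e.\ $f$ is the cokernel of the inclusion $\ker f\to X$. In a semiabelian category, $f$ being a regular epimorphism means $f=\coker(\ker f\to X)$, and since $\Img u=\Ker f$ the morphism $\ker f\to X$ factors through $u$ via a regular epimorphism $U\to\ker f$ (by regularity of $\X$, as $u$ decomposes as a regular epi onto its image followed by the kernel $\ker f\to X$). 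Therefore $f=\coker u$ in $\X$. Applying $T$, which preserves cokernels, gives $Tf=\coker(Tu)$ in $\A$, and since $\A$ is abelian this is exactly the assertion that $TU\xrightarrow{Tu}TX\xrightarrow{Tf}TY\to 0$ is exact at $TX$ and at $TY$. Hence $T$ is sequentially right exact.

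For the converse, suppose $T\from\X\to\A$ is sequentially right exact. By Lemma~\ref{lem:SRE->PreservesFiniteProducts}, $T$ preserves finite products, and being sequentially right exact it preserves the zero object (the empty product) and sends the unique map $0\to 0$ appropriately, so $T$ preserves all finite products. It remains to show $T$ preserves cokernels (coequalizers of parallel pairs reduce to cokernels of differences once we know $T$ is additive-like on the relevant hom-structure — but more directly, in a semiabelian category every regular epi is a cokernel and finite colimits are built from finite coproducts and coequalizers, and by Theorem~\ref{thm:ACat-Reflective-In-SACat} we have more structure available). Concretely: given $u\from U\to X$ in $\X$, form its cokernel $q\from X\to\coker u$; then $U\xrightarrow{u}X\xrightarrow{q}\coker u\to 0$ is exact, so $TU\xrightarrow{Tu}TX\xrightarrow{Tq}T(\coker u)\to 0$ is exact in $\A$, which means precisely that $Tq=\coker(Tu)$. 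Thus $T$ preserves cokernels; together with preservation of finite products (hence finite coproducts, since these agree with products in no category in general — here we instead argue that $T$ preserves binary coproducts because $X+X'\to\coker(X'\to X+X')\cong X$ type arguments, or more cleanly because a sequentially right exact functor to an abelian category preserves the coproduct as computed via the cokernel presentation). The cleanest route is: finite coproducts in $\A$ coincide with finite products, so once $T$ preserves finite products and cokernels it preserves all finite colimits.

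The main obstacle I anticipate is the bookkeeping in the converse showing that $T$ preserves \emph{coproducts} (not just products and cokernels), since in $\X$ coproducts and products genuinely differ; the trick is that the target $\A$ is abelian, so a binary coproduct $A\oplus A'$ in $\A$ can be detected from the product plus the cokernel data, and the comparison map $TX+TX'\to T(X+X')$ can be analysed by applying $T$ to the split exact sequences $0\to X\to X+X'\to X'\to 0$ (split as objects, not as algebras) — but in fact this sequence need not be split in $\X$. A safer argument, and the one I would actually write, invokes Theorem~\ref{thm:ACat-Reflective-In-SACat}: $T$ factors (up to the natural iso $\ab_\A\comp T\cong T$ since $\A$ is already abelian, so $\ab_\A=\IdMap{}$) and one reduces finite-colimit preservation to the cokernel and finite-product statements proved above, using that every finite colimit in an abelian category is a finite colimit in the underlying sense computable from biproducts and cokernels. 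So the corollary follows by combining Lemma~\ref{lem:SRE->PreservesFiniteProducts}, the elementary cokernel argument, and the observation that biproducts exist in $\A$.
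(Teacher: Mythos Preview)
Your easy direction (finite colimits $\Rightarrow$ sequentially right exact) is correct and is exactly what the paper does, only spelled out in more detail.

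The hard direction has a genuine gap. You claim that for an arbitrary morphism $u\colon U\to X$ the sequence $U\xrightarrow{u}X\xrightarrow{q}\coker u\to 0$ is exact, so that sequential right exactness forces $Tq=\coker(Tu)$. But in a semiabelian category exactness at $X$ means $\Ker q=\Img u$, and this holds only when $u$ is \emph{proper}; in general $\Ker(\coker u)$ is the normal closure of $\Img u$, which can be strictly larger. So your argument only shows that $T$ preserves cokernels of proper morphisms, and it is not clear that this suffices for arbitrary coequalizers or coproducts in $\X$. Your subsequent discussion of coproducts is honest about the difficulty but does not resolve it: the suggested split sequence $0\to X\to X+X'\to X'\to 0$ is not split (or even exact) in $\X$, and ``biproducts plus cokernels build all finite colimits'' is a statement about the \emph{target} $\A$, whereas what you must preserve are colimits formed in the non-abelian \emph{source} $\X$.

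The paper's route, which you gesture at but do not carry out, sidesteps all of this. By Theorem~\ref{thm:ACat-Reflective-In-SACat} one has $T\cong \Ab(T)\comp\ab_{\X}$. The abelianisation $\ab_{\X}$ is a left adjoint, hence preserves \emph{all} colimits; and $\Ab(T)\colon\Ab(\X)\to\A$ is a sequentially right exact functor between \emph{abelian} categories, where the notion coincides with preservation of finite colimits. Composing gives the result with no need to analyse coproducts or non-proper cokernels in $\X$ directly. That factorisation is the missing idea in your converse.
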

\begin{proof}
Suppose $T$ is sequentially right exact. By~\ref{thm:ACat-Reflective-In-SACat}, $T$ is equivalent to the composite \LM \AbCore{T}\comp \ab_{\X}\Rm. The abelianisation functor on $\X$ has a right adjoint, hence commutes with arbitrary colimits. Moreover, the restriction \LM \AbCore{T}\RM of $T$ to the abelian core of $\X$ is sequentially right exact if and only if it commutes with finite colimits.

Conversely, if $T$ commutes with arbitrary finite colimits, then it commutes with cokernels, and this implies the claim.
\end{proof}

\subsection{Adjunctions in \LM \SACat\Rm}
\label{subsec:SACat-Adjunctions}%
Consider an adjoint functor pair in \LM \SACat\Rm, with left adjoint \LM T\from \X\to \Y\RM and right adjoint \LM G\from \Y\to \X\Rm. Then $G$ is automatically sequentially exact, and $T$ sends projectives in $\X$ to projectives in $\Y$. Furthermore, $G$ reflects regular epimorphisms if and only if the adjunction counit \LM \AdjCUnit[Y]\from TGY\to Y\RM is a regular epimorphism for every $Y$ in $\Y$. In this case, whenever $\X$ has enough projectives, so does $\Y$.

Examples of \LM \SACat\Rm-adjunctions in which $G$ reflects regular epimorphisms include all regular epi-reflections to full and replete semi-abelian subcategories of $\X$, as well as all `change of ring' adjunctions between module categories. Every reflector to an abelian subcategory of which the inclusion functor is exact is a member of an \LM \SACat\Rm-adjunction; likewise, such is the reflector to any subvariety of a semi-abelian variety.

Beware: even the category of abelian groups contains full and replete regular epi-reflective subcategories which are not part of an \LM \SACat\Rm-adjunction---for instance, the category of torsion free abelian groups is one such, as it is not abelian, and even fails to be semi-abelian. Examples of adjoint functor pairs between abelian categories which do not belong to \LM \SACat\RM can be found amongst the embeddings of categories of sheaves into categories of presheaves; see, for instance,~\cite[pp.~26ff.]{Weibel}.

\section{Yoneda's Isomorphism Commutes with Homology: \\ Semiabelian Case}
\label{sec:Yoneda-Commutes-Homology-SemiAb}

\subsection{The Homological Yoneda Lemma in Semiabelian Categories}
Here we show that the homological Yoneda lemma holds for (sequentially) right exact functors from a semiabelian category to the category of abelian groups; see~\ref{Lemma-Yoneda-Iso-Commutes-Homology-Semiab}. As an immediate consequence, we see that the abelian based formula of Yoneda~\cite{Yoneda-Exact-Sequences} and Hilton--Rees~\cite{Hilton-Rees} for derived functors is valid for semiabelian domain categories as well.

Let us begin by explaining how we derive functors here. Every object $X$ in a semiabelian category $\X$ with enough projectives has a (semi)simplicial projective resolution \LM P(X)\to X\Rm, and the left derived functors of a right exact \LM T\from \X\to \AbGrps\RM may be defined via the Moore homology
\[
\LDrvd{T}{n}{X} \DefEq \H_n T(P(X)).
\]
Similarly, for an object $A$ in the abelian core \LM \AbCore{\X}\RM of $\X$,
\[
\ExtFnctr{n}{X}{A} \DefEq \H^n \HomFnctr{\X}{P(X)}{A}
\]
is the $n$-th derived functor of the contravariant functor \LM \HomFnctr{\X}{\cdot}{A}\from{\X\to \AbGrps}\Rm. This slightly ad hoc approach to deriving functors is sufficient for our purposes. The reader is therefore free to adopt, for example,
\begin{enumerate}
\item {\em the framework of comonadically defined resolutions} that are available if $\X$ is a semiabelian variety: the comonad comes from the `(underlying set)--(free object)' adjunction;
\item {\em Quillen's framework of simplicial model categories} and choose a cofibrant replacement of a constant simplicial object, in the simplicial model category structure in Quillen's
~\cite[II, Theorem 4]{Quillen} via~\cite{VdLinden:Simp}.
\end{enumerate}
For example, if \LM \X=\Grps\RM and $T$ is abelianisation, then \LM \LDrvd{T}{n}{X} \cong \H_{n+1}(X)\Rm, where the right hand side is classical group homology.

\begin{lemma}[Homological Yoneda lemma: semiabelian case]%
\label{Lemma-Yoneda-Iso-Commutes-Homology-Semiab}%
Let \LM T\from \X\to \AbGrps\RM be a (sequentially) right exact functor from a semiabelian category $\X$ to the category of abelian groups, and let $S$ be a simplicial object in $\X$. Then, for \LM n\geq 0\Rm, there is an isomorphism, which is natural in $S$ and in $T$:
$$
\NatTrafos{\H^n\HomFnctr{\X}{S}{-}|_{\AbCore{\X} }}{T|_{\AbCore{\X}}} \overset{\cong}{\to} \H_nTS.
$$
\end{lemma}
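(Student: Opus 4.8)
The strategy is to reduce the semiabelian statement to the abelian Lemma~\ref{Lemma-Yoneda-Commutes-Homology} by transporting everything along the abelianisation reflector \LM \ab_{\X}\from \X\to\AbCore{\X}\Rm. First I would invoke Theorem~\ref{thm:ACat-Reflective-In-SACat}: since \LM T\from \X\to\AbGrps\RM is sequentially right exact, its restriction \LM T|_{\AbCore{\X}}\from\AbCore{\X}\to\AbGrps\RM is an ordinary right exact functor between abelian categories (by the Corollary following~\ref{thm:ACat-Reflective-In-SACat}), and \LM T\cong T|_{\AbCore{\X}}\comp\ab_{\X}\Rm. Next, for a simplicial object $S$ in $\X$, applying $\ab_{\X}$ degreewise yields a simplicial object \LM \ab_{\X}(S)\RM in the abelian category \LM \AbCore{\X}\Rm, and since the Moore complex functor and $T|_{\AbCore{\X}}$ only see the underlying degreewise data, \LM \H_nTS\cong\H_n\bigl(T|_{\AbCore{\X}}\bigr)\bigl(\ab_{\X}(S)\bigr)\Rm, which we may read as the homology \LM \H_n\bigl(T|_{\AbCore{\X}}\bigr)(C)\RM of the associated (normalised) chain complex $C$ of \LM \ab_{\X}(S)\RM in \LM \AbCore{\X}\Rm.

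On the Nat side, the key identification is that Hom out of $S$ into the abelian core factors through abelianisation: for every $A$ in \LM \AbCore{\X}\Rm, the reflection gives a natural bijection \LM \HomFnctr{\X}{S_k}{A}\cong\HomFnctr{\AbCore{\X}}{\ab_{\X}(S_k)}{A}\Rm, exactly as in Lemma~\ref{Lemma-YonedaRestriction}. Hence, as cochain complexes of functors on \LM \AbCore{\X}\Rm, \LM \HomFnctr{\X}{S}{-}|_{\AbCore{\X}}\cong\HomFnctr{\AbCore{\X}}{\ab_{\X}(S)}{-}\Rm, and therefore \LM \H^n\HomFnctr{\X}{S}{-}|_{\AbCore{\X}}\cong\H^n\HomFnctr{\AbCore{\X}}{C}{-}\Rm as functors \LM \AbCore{\X}\to\AbGrps\Rm (using that normalisation induces a chain homotopy equivalence, so it is harmless to pass to $C$). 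Feeding these two identifications into the abelian homological Yoneda Lemma~\ref{Lemma-Yoneda-Commutes-Homology}, applied to the abelian category \LM \AbCore{\X}\Rm, the chain complex $C$, and the right exact functor \LM T|_{\AbCore{\X}}\Rm, produces the desired isomorphism
\[
\NatTrafos{\H^n\HomFnctr{\X}{S}{-}|_{\AbCore{\X}}}{T|_{\AbCore{\X}}}\;\cong\;\NatTrafos{\H^n\HomFnctr{\AbCore{\X}}{C}{-}}{T|_{\AbCore{\X}}}\;\overset{\cong}{\longrightarrow}\;\H_n\bigl(T|_{\AbCore{\X}}\bigr)(C)\;\cong\;\H_nTS.
\]

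Naturality in $S$ is immediate since every step is functorial in the simplicial object, and naturality in $T$ follows from naturality in $T|_{\AbCore{\X}}$ in Lemma~\ref{Lemma-Yoneda-Commutes-Homology} together with the fact that the equivalence \LM T\mapsto T|_{\AbCore{\X}}\comp\ab_{\X}\RM of Theorem~\ref{thm:ACat-Reflective-In-SACat} is itself natural. The main obstacle I anticipate is purely bookkeeping rather than conceptual: one must check carefully that the two reductions --- replacing $T$ by \LM T|_{\AbCore{\X}}\comp\ab_{\X}\RM, and replacing the simplicial object $S$ by the (normalised) chain complex $C$ of \LM \ab_{\X}(S)\RM --- are compatible on the nose with the homology and $\NatCat$ functors, i.e.\ that no subtlety arises from the Dold--Kan passage or from the fact that \LM \HomFnctr{\X}{S}{-}|_{\AbCore{\X}}\RM is only equivalent, not equal, to \LM \HomFnctr{\AbCore{\X}}{C}{-}\Rm. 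Since \LM \NatCat(-,T|_{\AbCore{\X}})\RM sends natural equivalences of functors to isomorphisms and sends the chain-homotopy equivalence between the Moore complex and the unnormalised one to an isomorphism on homology, this obstacle dissolves; everything else is an application of results already established.
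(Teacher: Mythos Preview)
Your proposal is correct and follows essentially the same route as the paper: factor $T$ through $\ab_{\X}$ via Theorem~\ref{thm:ACat-Reflective-In-SACat}, use the reflection to identify $\HomFnctr{\X}{S}{-}|_{\AbCore{\X}}$ with $\HomFnctr{\AbCore{\X}}{\ab_{\X}(S)}{-}$, and then invoke the abelian Lemma~\ref{Lemma-Yoneda-Commutes-Homology}. The only cosmetic difference is that the paper works directly with the \emph{unnormalised} chain complex of $\ab_{\X}(S)$, thereby sidestepping the Dold--Kan bookkeeping you flag as a potential obstacle.
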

\begin{proof}
Let $C$ denote the unnormalised chain complex associated to \LM \ab_{\X}(S)\Rm. Then
\begin{gather*}
\H^{n}\HomFnctr{\X}{S}{A} \cong \H^{n}\HomFnctr{\Ab(\X)}{\ab_{\X} (S)}{A} \cong \H^{n}\HomFnctr{\AbCore{\X}}{C}{A} \\
\H_{n}T(S)\cong\H_{n}T(\ab_{\X}(S)) \cong \H_{n}T|_{\AbCore{\X}}(C)
\end{gather*}
Critical here is that, up to a natural isomorphism, $T$ factors through the abelian core of $\X$; see~\ref{thm:ACat-Reflective-In-SACat}. Now the claim follows from Theorem~\ref{Lemma-Yoneda-Commutes-Homology}.
\end{proof}

\begin{theorem}\label{Theorem-Yoneda-Derived-Semiab}
Let \LM T\from \X\to \AbGrps\RM be a right exact functor on a semiabelian category with enough projectives. Then, for $X$ in $\X$ and \LM n\geq 0\Rm, there is an isomorphism
\[
\YY_{n}\from \NatTrafos{\ExtFnctr{n}{X}{-}}{T|_{\AbCore{\X}}} \overset{\cong}{\to} \LDrvd{T}{n}{X} ,
\]
which is natural in $X$ and in $T$.
\end{theorem}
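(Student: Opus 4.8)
The plan is to read this off from the simplicial homological Yoneda lemma, Lemma~\ref{Lemma-Yoneda-Iso-Commutes-Homology-Semiab}, applied to a simplicial projective resolution of $X$. Since $\X$ has enough projectives, fix a (semi)simplicial projective resolution \LM P(X)\to X\Rm. By the definitions recalled above, \LM \LDrvd{T}{n}{X}=\H_{n}T(P(X))\RM and \LM \ExtFnctr{n}{X}{-}\RM is the functor \LM \H^{n}\HomFnctr{\X}{P(X)}{-}|_{\AbCore{\X}}\RM on \LM \AbCore{\X}\Rm. Hence Lemma~\ref{Lemma-Yoneda-Iso-Commutes-Homology-Semiab} applied to the simplicial object \LM S\DefEq P(X)\RM produces at once an isomorphism
\[
\NatTrafos{\ExtFnctr{n}{X}{-}}{T|_{\AbCore{\X}}}\overset{\cong}{\longrightarrow}\LDrvd{T}{n}{X},
\]
which we take to be \LM \YY_{n}\Rm.

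What is left to do is purely a matter of well-definedness and naturality. For independence of the chosen resolution, recall that any two simplicial projective resolutions of $X$ are connected by a morphism over $X$, unique up to simplicial homotopy (the comparison theorem for projective resolutions), and that both \LM \H^{n}\HomFnctr{\X}{-}{A}\RM and \LM \H_{n}T(-)\RM are invariant under simplicial homotopy of morphisms of simplicial objects; together with the naturality of the isomorphism of Lemma~\ref{Lemma-Yoneda-Iso-Commutes-Homology-Semiab} in its simplicial variable, this shows that the two choices give the same \LM \YY_{n}\Rm. For naturality in $X$: a morphism \LM f\from X\to X'\RM lifts to a morphism \LM P(f)\from P(X)\to P(X')\RM of simplicial projective resolutions, unique up to homotopy, inducing the functorial maps on \LM \ExtFnctr{n}{-}{A}\RM and on \LM \LDrvd{T}{n}{-}\Rm; substituting \LM P(f)\RM into the naturality of Lemma~\ref{Lemma-Yoneda-Iso-Commutes-Homology-Semiab} gives the desired commuting square, and homotopy invariance makes it independent of the chosen lift. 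Naturality in $T$ is inherited directly from the naturality in $T$ already built into Lemma~\ref{Lemma-Yoneda-Iso-Commutes-Homology-Semiab}.

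In effect there is little new content here: all the genuine work---in particular the use of sequential right exactness, via Theorem~\ref{thm:ACat-Reflective-In-SACat}, to make $T$ factor up to natural isomorphism through the abelian core so that \LM \HomFnctr{\X}{S}{-}|_{\AbCore{\X}}\RM and \LM TS\RM can be computed against each other---is already packaged inside Lemma~\ref{Lemma-Yoneda-Iso-Commutes-Homology-Semiab}. The only mildly delicate point, and the main obstacle, is the standard homotopy-theoretic bookkeeping with simplicial projective resolutions needed to upgrade the pointwise isomorphism displayed above to one natural in $X$.
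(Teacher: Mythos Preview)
Your proof is correct and follows essentially the same route as the paper: apply Lemma~\ref{Lemma-Yoneda-Iso-Commutes-Homology-Semiab} to a simplicial projective resolution of $X$. The only cosmetic difference is that the paper singles out the case $n=0$ and handles it via Lemma~\ref{Lemma-YonedaRestriction}, whereas you treat all $n\geq 0$ uniformly through Lemma~\ref{Lemma-Yoneda-Iso-Commutes-Homology-Semiab} (which is legitimate, since that lemma is stated for $n\geq 0$); you also spell out the standard resolution-independence and naturality bookkeeping that the paper leaves implicit.
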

\begin{proof}
For \LM n=0\Rm, this follows from Lemma~\ref{Lemma-YonedaRestriction}. For \LM n\geq 1\Rm, apply Theorem~\ref{Lemma-Yoneda-Iso-Commutes-Homology-Semiab} to a simplicial projective resolution of $X$.
\end{proof}

\begin{remark}
The isomorphism in Theorem~\ref{Theorem-Yoneda-Derived-Semiab} actually expresses a local self-adjointness property of the derivation operator on suitable functors,
\[
\NatTrafos{\ExtFnctr{n}{X}{-} }{T|_{\AbCore{\X}}}\cong \NatTrafos{\HomFnctr{\X}{ X }{ - } }{\L_nT}.
\]
\end{remark}

In Section~\ref{sec:UCEs} we will require the following consequences of~\ref{Lemma-Yoneda-Iso-Commutes-Homology-Semiab} and~\ref{Theorem-Yoneda-Derived-Semiab}.

\begin{corollary}\label{Cory-Yoneda-Derived-SemiAb-RMod}
Let \LM T\from \X\to \LModules{R}\RM be a right exact functor. If $\X$ has enough projectives then, for \LM n\geq 0\RM and $X$ in $\X$, there is an isomorphism of left $R$-modules
\[
\YY_{n}\from\NatTrafos{\ExtFnctr{n}{X}{-}}{T|_{\AbCore{\X}}} \overset{\cong}{\to} \LDrvd{T}{n}{X},
\]
which is natural in $X$ and in $T$.
\end{corollary}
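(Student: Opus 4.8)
The plan is to reduce everything to Theorem~\ref{Theorem-Yoneda-Derived-Semiab} by tracking the extra $R$-module structure, exactly in the spirit of Remark~\ref{rem:Yoneda-Iso-Commutes-Homology-RMod}. Regard a functor $T\from\X\to\LModules{R}$ equivalently as one taking values in $(R,\ZNr)$-bimodules, and let $U\from\LModules{R}\to\AbGrps$ be the forgetful functor. Since $U$ creates all limits and colimits, and $U$ sends short exact sequences to short exact sequences, the composite $U\comp T\from\X\to\AbGrps$ is again (sequentially) right exact, so Theorem~\ref{Theorem-Yoneda-Derived-Semiab} provides a natural isomorphism of abelian groups
$$
\YY_n\from \NatTrafos{\ExtFnctr{n}{X}{-}}{(U\comp T)|_{\AbCore{\X}}}\overset{\cong}{\to}\L_n(U\comp T)(X).
$$
It remains to promote this to an isomorphism of left $R$-modules, natural in $X$ and in $T$.

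Next I would put the $R$-module structures on the two sides. On the right, $\L_n(U\comp T)(X)=\H_n(U\comp T)(P(X))$ is the $n$-th homology of the Moore complex of $T(P(X))$, which is a simplicial object of $\LModules{R}$; since $U$ creates kernels and cokernels, this homology is computed in $\LModules{R}$, endowing $\L_nT(X)$ with a canonical left $R$-module structure for which $U(\L_nT(X))=\L_n(U\comp T)(X)$. On the left, $\NatTrafos{F}{T|_{\AbCore{\X}}}$ carries the pointwise left $R$-action $(r\cdot\tau)_A\DefEq r\cdot(\tau_A)$; this is well defined, i.e.\ $r\cdot\tau$ is again a natural transformation, precisely because the $R$-actions on the groups $T(A)$ are natural in $A$, being the component structure maps of a functor into $\LModules{R}$. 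With these structures in place, one checks that the bijection $\YY_n$ is $R$-linear by revisiting the chains of isomorphisms constituting the proofs of Lemmas~\ref{Lemma-Yoneda-Commutes-Homology} and~\ref{Lemma-Yoneda-Iso-Commutes-Homology-Semiab} and of Theorem~\ref{Theorem-Yoneda-Derived-Semiab}: continuity of $\mathrm{Nat}$ and of $\mathrm{Hom}$ in the first variable, right exactness of $T$, and the snake interchange of~\ref{subsec:Homology} are all assembled from $\ZNr$-linear, natural constructions and hence respect the pointwise $R$-actions; the only link calling for an explicit remark is the ordinary Yoneda bijection $\NatTrafos{\HomFnctr{}{A}{-}}{T}\cong TA$, $\tau\mapsto\tau_A(\IdMap{A})$, which is $R$-linear since $(r\cdot\tau)_A(\IdMap{A})=r\cdot(\tau_A(\IdMap{A}))$ by definition of the pointwise action. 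Hence $\YY_n$ is an isomorphism of left $R$-modules.

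Finally, naturality in $X$ and in $T$ is inherited from Theorem~\ref{Theorem-Yoneda-Derived-Semiab} together with the observation that the transition maps occurring there are $R$-linear for the structures just described. The one point deserving slightly more care is the abelianisation factorisation of Theorem~\ref{thm:ACat-Reflective-In-SACat} invoked inside~\ref{Lemma-Yoneda-Iso-Commutes-Homology-Semiab}: we need $\AbCore{T}\comp\ab_{\X}\cong T$ as an isomorphism of functors valued in $\LModules{R}$, not merely in $\AbGrps$. This holds because $\AbCore{\LModules{R}}=\LModules{R}$, so $\AbCore{T}$ is again $\LModules{R}$-valued, and the comparison isomorphism of~\ref{thm:ACat-Reflective-In-SACat} is built entirely from the universal property of abelianisation and from preservation of finite products, constructions indifferent to the ambient $R$-enrichment. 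I expect this bookkeeping---confirming that the pointwise $R$-action is natural, that it is preserved by every link in the chain, and that the~\ref{thm:ACat-Reflective-In-SACat} factorisation upgrades---to be the only real work; there is no new conceptual obstacle beyond what already appears in Remark~\ref{rem:Yoneda-Iso-Commutes-Homology-RMod}.
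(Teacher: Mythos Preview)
Your proof is correct and follows the same approach as the paper, which simply invokes Theorem~\ref{Theorem-Yoneda-Derived-Semiab} for the underlying abelian-group isomorphism and then defers to Remark~\ref{rem:Yoneda-Iso-Commutes-Homology-RMod} for the $R$-module enrichment. You have unpacked in detail what that remark is asserting, including the pointwise $R$-action on $\mathrm{Nat}$, the $R$-linearity of the ordinary Yoneda bijection, and the compatibility of the abelianisation factorisation from Theorem~\ref{thm:ACat-Reflective-In-SACat} with the $R$-module structure; this is useful elaboration but not a different route.
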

\begin{proof}
An isomorphism of right $\ZZ$-modules comes from~\ref{Theorem-Yoneda-Derived-Semiab}. For the additional enrichment in left $R$-modules, see Remark~\ref{rem:Yoneda-Iso-Commutes-Homology-RMod}.
\end{proof}

We omit the (rather lengthy) proof of the next result which is not used in what follows.

\begin{corollary}
\label{Cory-Yoneda-Derived-SemiAb-Adjoint}%
Let \LM T\from \X\to \R\RM be a (right exact) reflector from a semiabelian variety $\X$ to a full and replete subcategory $\R$ of its abelian core. Then for \LM n\geq 0\RM and $X$ in $\X$, there is an isomorphism of $\R$-objects
\[
\YY_{n}\from \NatTrafos{\ExtFnctr{n}{X}{-}}{T|_{\AbCore{\X}}}|_{\R} \overset{\cong}{\to} \LDrvd{T}{n}{X},
\]
which is natural in $X$ and in $T$.
\end{corollary}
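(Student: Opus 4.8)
The plan is to bootstrap from Corollary~\ref{Cory-Yoneda-Derived-SemiAb-RMod}. Since $\X$ is a semiabelian variety, its abelian core is $\AbCore{\X}=\LModules{R}$, where $R$ is the endomorphism ring of the abelianisation of the free object on a single generator (Example~\ref{exa:AbCore(SAVariety)}), and $\X$ has enough projectives. Regarding the reflector $T$ as a functor into $\LModules{R}$, Corollary~\ref{Cory-Yoneda-Derived-SemiAb-RMod} already provides an isomorphism of left $R$-modules
\[
\YY_{n}\from\NatTrafos{\ExtFnctr{n}{X}{-}}{T|_{\AbCore{\X}}}\overset{\cong}{\longrightarrow}\LDrvd{T}{n}{X},
\]
natural in $X$ and in $T$. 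So two things remain: first, that shrinking the index category of $\NatTrafos{}{}$ from $\AbCore{\X}$ down to $\R$ does not change this group; and second, that both sides are then objects of $\R$, with $\YY_{n}$ an $\R$-morphism.

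For the second point I would first observe that right exactness of $T$ forces $\R$ to be an abelian subcategory of $\AbCore{\X}$ whose inclusion is exact. Being reflective, $\R$ is closed in $\AbCore{\X}$ under all limits, in particular under kernels and finite products. For cokernels, let $f\from A\to B$ be a morphism of $\R$ and let $q\from B\to C$ be its cokernel in $\AbCore{\X}$; applying $T$ to the exact sequence $A\to B\to C\to 0$ and using that the reflection units $\eta_{A}$ and $\eta_{B}$ are invertible (as $A$, $B$ lie in $\R$), functoriality of cokernels identifies $\eta_{C}\from C\to TC$ as an isomorphism, so $C\cong TC\in\R$ and hence $C\in\R$ by repleteness. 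Therefore $\LDrvd{T}{n}{X}=\H_{n}T(P(X))$, being built from the kernels, images and cokernels of a complex with terms in $\R$, is an object of $\R$; and since the inclusion $\R\hookrightarrow\AbCore{\X}=\LModules{R}$ is exact, the underlying $R$-module of this $\R$-object is precisely the one occurring in Corollary~\ref{Cory-Yoneda-Derived-SemiAb-RMod}.

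For the first point, the key remark is that the inclusion $\R\hookrightarrow\AbCore{\X}$ is itself reflective, with reflector $T|_{\AbCore{\X}}$ --- the same $\hom$-set computation as in the proof of Lemma~\ref{Lemma-YonedaRestriction}. Restriction along this inclusion gives a homomorphism
\[
\NatTrafos{\ExtFnctr{n}{X}{-}}{T|_{\AbCore{\X}}}\longrightarrow\NatTrafos{\ExtFnctr{n}{X}{-}|_{\R}}{T|_{\R}},
\]
which I claim is an $R$-linear bijection, natural in $X$ and $T$. Injectivity and surjectivity both come from naturality with respect to the reflection unit $\eta_{A}\from A\to TA$, viewed as a morphism of $\AbCore{\X}$: since $T\eta_{A}$ is invertible (idempotency of the reflection), every natural transformation $\alpha$ on the left satisfies $\alpha_{A}=(T\eta_{A})^{-1}\circ\alpha_{TA}\circ\ExtFnctr{n}{X}{\eta_{A}}$ for all $A$ in $\AbCore{\X}$, so $\alpha$ is determined by its restriction to $\R$, and conversely the same formula extends any $\beta$ defined on $\R$ to a natural transformation on $\AbCore{\X}$ restricting back to $\beta$; $R$-linearity and naturality in $X$ and $T$ are immediate. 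Composing this bijection with $\YY_{n}$ yields the desired $R$-linear isomorphism
\[
\NatTrafos{\ExtFnctr{n}{X}{-}}{T|_{\AbCore{\X}}}|_{\R}\overset{\cong}{\longrightarrow}\LDrvd{T}{n}{X};
\]
as the target lies in $\R$ and $\R$ is a full, replete subcategory of $\LModules{R}$, the source lies in $\R$ too and the isomorphism is one of $\R$-objects, naturally in $X$ and $T$.

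I expect the main obstacle to be the verification, in the third paragraph, that restriction along the reflective inclusion $\R\hookrightarrow\AbCore{\X}$ is a bijection on natural transformations into functors factoring through $\R$: although the extension formula is forced, spelling out that it is natural, that it is mutually inverse to restriction, and that everything is compatible with the $R$-action and with variation in $X$ and $T$, is where the length of the proof comes from. A subsidiary point requiring care is that the homology computing $\LDrvd{T}{n}{X}$ may legitimately be formed inside $\R$ --- i.e.\ that the inclusions $\R\hookrightarrow\AbCore{\X}\hookrightarrow\X$ preserve the kernels and cokernels involved --- so that the $\R$-object structure on $\LDrvd{T}{n}{X}$ is unambiguous and matches the $R$-module structure inherited from Corollary~\ref{Cory-Yoneda-Derived-SemiAb-RMod}.
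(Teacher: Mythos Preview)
The paper actually omits the proof of this corollary entirely, remarking only that it is ``rather lengthy'' and ``not used in what follows.'' So there is no paper proof to compare against; you are filling a gap the authors deliberately left.

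Your strategy is sound and is almost certainly what the authors had in mind. The three ingredients you isolate are the right ones: (a) the $R$-module isomorphism already supplied by Corollary~\ref{Cory-Yoneda-Derived-SemiAb-RMod}, using that $\AbCore{\X}=\LModules{R}$ for a semiabelian variety; (b) the observation that sequential right exactness of the reflector forces $\R$ to be an exact abelian subcategory of $\AbCore{\X}$, so that $\LDrvd{T}{n}{X}$ computed in $\R$ agrees with the computation in $\LModules{R}$ and genuinely lies in $\R$; and (c) the restriction--extension argument along the reflective inclusion $\R\hookrightarrow\AbCore{\X}$, which shows that natural transformations into a functor factoring through $\R$ are determined by, and recoverable from, their values on $\R$. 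Your extension formula $\alpha_{A}=(T\eta_{A})^{-1}\comp\alpha_{TA}\comp\ExtFnctr{n}{X}{\eta_{A}}$ is correct, and the naturality check you anticipate as the lengthy part is exactly the routine but page-consuming verification the authors presumably wished to avoid. The final repleteness step, transporting the $\R$-object structure along the $R$-linear isomorphism, is legitimate.

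One small point of care: the precise meaning of the trailing $|_{\R}$ in the statement is not made explicit in the paper, and your reading (restricting the indexing category of $\NatTrafos{}{}$ to $\R$) is one reasonable interpretation; another is simply ``regarded as an object of $\R$.'' Your argument in fact establishes both, since you show the restricted and unrestricted $\NatTrafos{}{}$ coincide as $R$-modules and then identify the result with an object of $\R$.
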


\subsection{A Long Exact Homology Sequence, Semiabelian Case}\label{LES-SemiAbelian}
Given a semiabelian category $\X$ with enough projectives, let \LM T\from {\X\to \AbGrps}\RM be a right exact functor, and consider a short exact sequence \LM 0 \to K\to X\to Y\to 0\RM in $\X$. If $\X$ is abelian, there is the familiar associated long exact sequence of derived functors. However, the example of \LM \X=\Grps\RM shows that, for general $\X$, the relationship between the derived functors of $K$, $X$, and $Y$ can be much more complicated.

On the other hand, we can say more in the special case where $T$ is the reflector from $\X$ onto a reflective subcategory $\R$ of the abelian core of $\X$. Given an \defn{extension (over $Y$)}, that is, a regular epimorphism $f\colon{X\to Y}$, choose a kernel~$K$ and divide out the Higgins commutator (\ref{Higgins}) to obtain the central extension
$$
\xymatrix@R=5ex@C=3em{
A\DefEq K/[K,X] \ar@{{ |>}->}[r] &
 X/[K,X] \ar@{-{ >>}}[r] &
 Y.
}
$$
Then construct \LM T_1(f)\RM via the pushout of this central extension along the reflection unit \LM A\to T(A)\EqDef\Ell_0(f)\Rm. This yields a reflector \LM T_1\from {\CatExt(\X)\to \CExt_{\R}(\X)}\Rm, where \LM \CatExt(\X)\RM is the category of regular epimorphisms in $\X$, and \LM \CExt_{\R}(\X)\RM is the full subcategory of regular epimorphisms with central kernel, and whose kernel object is in~$\R$. In this setting the derived functors of \LM T_1\RM are defined (up to a unique natural isomorphism). Applied to $f$, the functor \Lm \L_{n}T_1\RM gives a central extension of the form
$$
\xymatrix@R=5ex@C=3em{
\Ell_n(f) \ar@{{ |>}->}[r]^{\cong} &
 \Ell_n(f) \ar@{-{ >>}}[r]^-{\LDrvd{T_1}{n}{f}} &
 0.
}
$$
Via a result in~\cite{Everaert-Gran-TT}, the proof given in Everaert~\cite{Tomasthesis}, see also~\cite[2.6]{GVdL2}, establishes a long exact sequence in~$\R$:
\[
\xymatrix@R=.3ex{
{\cdots} \ar[r] &
 \LDrvd{T}{n+1}{Y} \ar[r]^-{\delta^{n+1}_{f}} &
 \Ell_n(f) \ar[r]^-{\gamma^{n}_{f}} &
 \LDrvd{T}{n}{X} \ar[r]^-{f_*} &
 \LDrvd{T}{n}{Y} \ar[r] & \cdots \\
{\cdots} \ar[r] &
 \LDrvd{T}{1}{Y} \ar[r]_-{\delta^{1}_{f}} &
 \Ell_0(f) \ar[r]_-{\gamma^{0}_{f}} &
 \LDrvd{T}{0}{X} \ar[r]_-{f_*} &
 \LDrvd{T}{0}{Y} \ar[r] &
 0. }
\]
A key issue here (which will appear again in Remark~\ref{Remark-Terminology}) is the coincidence of central extensions in the above sense with the central extensions relative to the reflector $T$ defined via categorical Galois theory in~\cite{Everaert-Gran-TT}.

\subsection{A Long Exact Cohomology Sequence}
A similar result holds for cohomology. Let \LM P(f)\to f\RM be a simplicial projective resolution of $f$, and write
\[
\mathcal{E}\!\mathit{xt}_{1}^{n}(f,A)\DefEq \H^{n}\HomFnctr{\X}{ \Ell_0(P(f)) }{ A }
\]
for the cohomology of $f$ with coefficients in an abelian object $A$ of $\X$. Using results in~\cite{EGVdL}, we find:

\begin{proposition}\label{LES-Cohomology}
Let \LM f\from {X\to Y}\RM be a regular epimorphism in a semiabelian category $\X$ with enough projectives, and let $A$ be an abelian object. Then the sequence of abelian groups below is exact:
\[
\resizebox{\textwidth}{!}
{\xymatrix@C=3em@R=1ex{
{\cdots} &
 \Ext^{n+1}(Y,A) \ar[l] &
 \mathcal{E}\!\mathit{xt}_{1}^{n}(f,A) \ar[l]_-{\varepsilon^{n+1}_{f}} &
 \Ext^{n}(X,A) \ar[l]_-{\varphi^{n}_{f}} &
 \Ext^{n}(Y,A) \ar[l]_-{\Ext^{n}(f,A)} &
 \cdots \ar[l] \\
{\cdots} &
 \Ext^{1}(Y,A) \ar[l] &
 \mathcal{E}\!\mathit{xt}_{1}^{0}(f,A) \ar[l]^-{\varepsilon^{1}_{f}} &
 \Ext^{0}(X,A) \ar[l]^-{\varphi^{0}_{f}} &
 \Ext^{0}(Y,A) \ar[l]^-{\Ext^{0}(f,A)} &
 0 \ar[l]}}
\]
\end{proposition}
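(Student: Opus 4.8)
The plan is to dualise the argument that produced the long exact homology sequence in~\ref{LES-SemiAbelian}, feeding it through the homological Yoneda isomorphism of Theorem~\ref{Theorem-Yoneda-Derived-Semiab}. The key observation is that cohomology $\Ext^{n}(X,A)$ and $\mathcal{E}\!\mathit{xt}_{1}^{n}(f,A)$ are not primitive objects here: by Theorem~\ref{Theorem-Yoneda-Derived-Semiab}, for a reflector $T$ onto the abelian core (or any right exact functor) one has $\NatTrafos{\Ext^{n}(X,-)}{T|_{\AbCore{\X}}}\cong\LDrvd{T}{n}{X}$, and the results in~\cite{EGVdL} give the analogous identification $\NatTrafos{\mathcal{E}\!\mathit{xt}_{1}^{n}(f,-)}{T|_{\AbCore{\X}}}\cong\Ell_n(f)$ together with the compatibility of these isomorphisms with the maps $\varepsilon^{n+1}_f$, $\varphi^{n}_f$ and $\Ext^{n}(f,A)$ on one side and $\delta^{n+1}_f$, $\gamma^{n}_f$ and $f_*$ on the other. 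Thus, for a fixed abelian $A$, it suffices to choose the reflector $T$ onto $\AbCore{\X}$ (or, if one wants to land in $\AbGrps$, the functor $\HomFnctr{\X}{-}{A}$ composed appropriately) so that the stated cohomology sequence is obtained by applying the contravariant, limit-preserving functor $\NatTrafos{-}{T|_{\AbCore{\X}}}$ to the long exact sequence in $\R$ from~\ref{LES-SemiAbelian}.

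Concretely, I would proceed as follows. First, fix a simplicial projective resolution $P(f)\to f$ and recall from~\ref{LES-SemiAbelian} that applying $\L_\bullet T_1$ to $f$ produces the objects $\Ell_n(f)=\H_n T_1(P(f))$ sitting in the long exact sequence
\[
\cdots \to \LDrvd{T}{n+1}{Y} \xrightarrow{\delta^{n+1}_f} \Ell_n(f) \xrightarrow{\gamma^{n}_f} \LDrvd{T}{n}{X} \xrightarrow{f_*} \LDrvd{T}{n}{Y} \to \cdots
\]
in $\R$. Second, invoke Theorem~\ref{Theorem-Yoneda-Derived-Semiab} and its cohomological counterpart (this is where~\cite{EGVdL} enters) to rewrite each term of this sequence as $\NatTrafos{-}{T|_{\AbCore{\X}}}$ of a representing functor: $\LDrvd{T}{n}{X}\cong\NatTrafos{\Ext^{n}(X,-)}{T|_{\AbCore{\X}}}$ and $\Ell_n(f)\cong\NatTrafos{\mathcal{E}\!\mathit{xt}_{1}^{n}(f,-)}{T|_{\AbCore{\X}}}$. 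Third, since $\NatTrafos{-}{T|_{\AbCore{\X}}}$ is additive and sends colimits of functors to limits, and since a sequence of functors whose every $\NatTrafos{-}{T}$-image is exact for a cogenerating family of targets $T$ is itself exact, one transports exactness across. Finally, specialising to $T=\HomFnctr{\R}{-}{A}$ for a fixed abelian object $A$ (equivalently, evaluating at $A$ after using the Yoneda lemma~\ref{Lemma-YonedaRestriction}) collapses each $\NatTrafos{\Ext^{n}(X,-)}{\HomFnctr{\R}{-}{A}}$ to $\Ext^{n}(X,A)$ and each $\NatTrafos{\mathcal{E}\!\mathit{xt}_{1}^{n}(f,-)}{\HomFnctr{\R}{-}{A}}$ to $\mathcal{E}\!\mathit{xt}_{1}^{n}(f,A)$, yielding precisely the exact sequence in the statement, with the connecting maps $\varepsilon^{n+1}_f$, $\varphi^{n}_f$ identified as the $\NatCat$-images of $\delta^{n+1}_f$, $\gamma^{n}_f$.

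The main obstacle is the second step: establishing that the objects $\mathcal{E}\!\mathit{xt}_{1}^{n}(f,A)=\H^{n}\HomFnctr{\X}{\Ell_0(P(f))}{A}$ genuinely represent the comonadic derived functors $\Ell_n(f)=\H_n T_1(P(f))$ via a homological Yoneda isomorphism, compatibly with the connecting morphisms. For the object-level bijection this is a direct transcription of Lemma~\ref{Lemma-Yoneda-Iso-Commutes-Homology-Semiab} applied to the simplicial object $\Ell_0(P(f))$ in $\X$ together with the functor $T_1$; the genuine subtlety is that $T_1$ lives on $\CatExt(\X)$ rather than on $\X$, so one must check that the snake/connecting-morphism argument of Theorem~\ref{Theorem-Yoneda-Connecting-Map} applies to the relevant short exact sequence of (co)chain complexes coming from the resolution $P(f)\to f$ and its kernel part—and this is exactly the content borrowed from~\cite{EGVdL}. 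I would state this as an auxiliary lemma (or cite~\cite{EGVdL} for it directly) and treat the rest as a formal consequence of Theorem~\ref{Theorem-Yoneda-Derived-Semiab} and the exactness of the homology sequence in~\ref{LES-SemiAbelian}. One routine point to verify along the way is naturality in $A$, which is immediate once the representability is phrased as a natural isomorphism of functors of $A$.
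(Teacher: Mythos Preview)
Your strategy is more circuitous than needed and contains a genuine gap. The paper's route (indicated by ``Using results in~\cite{EGVdL}, we find'') is the direct one, essentially spelled out in the proof of the very next result, Theorem~\ref{Theorem-Duality-NonAbelian}: choose a simplicial projective resolution $p\colon P\to Q$ of $f$; applying the reflector yields a regular epimorphism of abelian simplicial objects, hence a short exact sequence of chain complexes whose kernel term computes the $\Ell_n(f)$. Applying $\HomFnctr{\X}{-}{A}$ to the same resolution data produces a short exact sequence of cochain complexes (this is where~\cite{EGVdL} is invoked), and its long exact cohomology sequence is exactly the one claimed. No Yoneda transfer is involved.

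The gap in your approach is the ``transport exactness across'' step. You want to argue: since $\NatTrafos{-}{T|_{\AbCore{\X}}}$ applied to the cohomology functor-sequence yields the (exact) homology sequence for every right exact $T$, the functor-sequence itself must be pointwise exact. But right exact functors $T\colon \X\to\AbGrps$ are not shown to form a class that detects pointwise exactness of sequences of $\AbGrps$-valued functors on $\AbCore{\X}$, and the natural probe for the value at a fixed $A$---namely $\HomFnctr{\R}{-}{A}$, your proposed specialisation---is contravariant, hence not even a candidate for the covariant $T$ in Theorem~\ref{Theorem-Yoneda-Derived-Semiab}, let alone a right exact one. The alleged collapse $\NatTrafos{\Ext^{n}(X,-)}{\HomFnctr{\R}{-}{A}}\cong\Ext^{n}(X,A)$ therefore does not typecheck.

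You do brush against the correct argument in your final paragraph---the short exact sequence of (co)chain complexes coming from $P(f)$---but you treat it as an auxiliary lemma in service of the Yoneda transfer. In fact, once that short exact sequence is in hand, the long exact cohomology sequence is immediate and the Yoneda machinery plays no role in Proposition~\ref{LES-Cohomology} itself; it only enters in Theorem~\ref{Theorem-Duality-NonAbelian}, where the two long exact sequences are compared.
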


\begin{theorem}\label{Theorem-Duality-NonAbelian}
Given a semiabelian variety $\X$, consider a reflective subcategory~$\R$ of its abelian core with reflector \LM T\from \X\to \R\Rm. Then a regular epimorphism~\LM f\from{X\to Y}\RM in~$\X$ has the associated commutative ladder of $\R$-objects below.
\[
\resizebox{\textwidth}{!}
{\xymatrix@R=8ex@C=1.5em{
\NatTrafos{ \ExtFnctr{n+1}{Y}{-} }{ T|_{\Ab(\X)} }|_{\R} \ar@<+10pt>[d]_{\cong}^{\YY_{n+1}(Y)} \ar[r]^-{(\varepsilon^{n}_{f})^\asterisk} &
 \NatTrafos{ \ExtFnctr[1]{n}{f}{-} }{ T|_{\Ab(\X)}}|_{\R} \ar[r]^{(\varphi^{n-1}_{f})^{\asterisk}} \ar[d]^{\cong} &
 \NatTrafos{ \ExtFnctr{n}{X}{-} }{ T|_{\Ab(\X)} }|_{\R} \ar[r]^-{(f^\asterisk)^\asterisk} \ar[d]_{\YY_{n}(X)}^{\cong} &
 \NatTrafos{ \ExtFnctr{n}{Y}{-} }{ T|_{\Ab(\X)} }|_{\R} \ar@<-10pt>[d]_{\YY_{n}(Y)}^{\cong} \\
\cdots \to \LDrvd{T}{n+1}{Y} \ar[r]_-{\delta^{n+1}_{f}} &
 \Ell_n(f) \ar[r]_-{\gamma^{n}_f} &
 \LDrvd{T}{n}{X} \ar[r]_-{\LDrvd{T}{n}{f}} &
 \LDrvd{T}{n}{Y}\to \cdots}}
\]
\end{theorem}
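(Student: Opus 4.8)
The plan is to build the commutative ladder by applying the contravariant functor $\NatTrafos{\cdot}{T|_{\Ab(\X)}}$ to the long exact cohomology sequence of Proposition~\ref{LES-Cohomology} (with variable abelian coefficients), and then to identify each resulting term and connecting morphism with the corresponding entry of the long exact homology sequence of~\ref{LES-SemiAbelian} via the appropriate instance of the homological Yoneda isomorphism. The top row is obtained by noting that, for each fixed $n$, Proposition~\ref{LES-Cohomology} supplies a short exact sequence of cochain complexes in the coefficient variable $A$ running over $\AbCore{\X}$ --- namely the sequence built from $\HomFnctr{\X}{P(f)}{-}$, $\HomFnctr{\X}{\Ell_0(P(f))}{-}$, and (the kernel/cokernel description of) $\HomFnctr{\X}{P(Y)}{-}$ --- so that Theorem~\ref{Lemma-Yoneda-Commutes-Homology}, applied levelwise and naturally, turns the image under $\NatTrafos{\cdot}{T|_{\Ab(\X)}}$ of this long exact cohomology sequence into a long exact sequence of $\ZZ$-modules; the $\R$-enrichment is then inserted as in Remark~\ref{rem:Yoneda-Iso-Commutes-Homology-RMod} and Corollary~\ref{Cory-Yoneda-Derived-SemiAb-RMod}.

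First I would fix the chain-complex input: let $P(f)\to f$ be a simplicial projective resolution of the regular epimorphism $f$, so that $P(Y)$, $\Ell_0(P(f))$, and the kernel data of $f$ all arise from $P(f)$ in a coherent way, and the sequences of~\ref{LES-SemiAbelian} and~\ref{LES-Cohomology} become the homology and cohomology of genuine (co)chain complexes obtained from $P(f)$ by applying $T$, respectively $\HomFnctr{\X}{\cdot}{A}$. Second, I would invoke Theorem~\ref{Lemma-Yoneda-Iso-Commutes-Homology-Semiab} (equivalently Theorem~\ref{Theorem-Yoneda-Derived-Semiab}) to get the outer vertical isomorphisms $\YY_n(X)$, $\YY_n(Y)$, and the middle vertical isomorphism at the $\Ell_0(P(f))$-term; the latter is exactly the statement that $\NatTrafos{\ExtFnctr[1]{n}{f}{-}}{T|_{\Ab(\X)}}$ is isomorphic to $\Ell_n(f)$, which follows by the same argument applied to the chain complex $\HomFnctr{\X}{\Ell_0(P(f))}{-}$ together with the identification $\L_nT_1(f)=\Ell_n(f)$ and the homological Yoneda isomorphism. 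Third, I would verify that the three types of square commute: the squares involving $(f^\asterisk)^\asterisk$ versus $\LDrvd{T}{n}{f}$ commute by naturality of $\YY$ in the chain-complex variable (this is contained in the naturality clause of~\ref{Lemma-Yoneda-Iso-Commutes-Homology-Semiab}), and the square involving the connecting morphism $(\varepsilon^{n}_{f})^\asterisk$ versus $\delta^{n+1}_{f}$ commutes by the connecting-morphism compatibility established in Theorem~\ref{Theorem-Yoneda-Connecting-Map} --- transported to the semiabelian setting through the abelian-core factorisation of Theorem~\ref{thm:ACat-Reflective-In-SACat} --- while the square with $(\varphi^{n-1}_{f})^\asterisk$ versus $\gamma^n_f$ commutes by naturality of the homological Yoneda isomorphism with respect to the morphism of cochain complexes $\HomFnctr{\X}{P(Y)}{-}\to\HomFnctr{\X}{\Ell_0(P(f))}{-}$ induced by the canonical comparison $\Ell_0(P(f))\to P(Y)$.

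The main obstacle I anticipate is the middle column: one must check that the cohomology $\mathcal{E}\!\mathit{xt}_{1}^{n}(f,A)=\H^{n}\HomFnctr{\X}{\Ell_0(P(f))}{A}$, as $A$ ranges over $\AbCore{\X}$, genuinely is the cohomology of a cochain complex of representable functors to which the homological Yoneda lemma~\ref{Lemma-Yoneda-Commutes-Homology} applies, and that the connecting morphisms $\varepsilon^n_f$ and $\varphi^n_f$ of Proposition~\ref{LES-Cohomology} are precisely the connecting and comparison morphisms of a short exact sequence of cochain complexes in the coefficient variable; once this is pinned down --- using that $P(f)$, being a resolution of an extension, furnishes such a short exact sequence termwise-split in each simplicial degree --- the identification $\NatTrafos{\ExtFnctr[1]{n}{f}{-}}{T|_{\Ab(\X)}}\cong\Ell_n(f)$ and the commutativity of all squares reduce to Theorem~\ref{Theorem-Yoneda-Connecting-Map} applied to this short exact sequence of cochain complexes, exactly as in the proof of Theorem~\ref{Theorem-Yoneda-Derived-Connecting-Map} but with the abelian-core reduction of~\ref{thm:ACat-Reflective-In-SACat} interposed. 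The $\R$-linearity of every map in the ladder then follows formally from Corollary~\ref{Cory-Yoneda-Derived-SemiAb-RMod} together with Corollary~\ref{Cory-Yoneda-Derived-SemiAb-Adjoint}, since all the structural isomorphisms and connecting morphisms were built out of the enriched versions of the constructions used above.
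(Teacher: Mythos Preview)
Your proposal is correct and follows essentially the same route as the paper's proof: take a simplicial projective resolution $p\colon P\to Q$ of $f$, apply $T$ (equivalently, pass through the abelian-core factorisation of Theorem~\ref{thm:ACat-Reflective-In-SACat}) to obtain a termwise-split short exact sequence of chain complexes, and then invoke Theorem~\ref{Theorem-Yoneda-Connecting-Map}; the one nontrivial identification---that the homology of the kernel chain complex computes $\Ell_n(f)=\L_nT_1(f)$---is exactly what the paper imports from~\cite{Tomasthesis}, and you have correctly flagged it as the main point to pin down.
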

\begin{proof}
Apply $T$ to a simplicial projective resolution \LM p\from P\to Q\RM of $f$ to obtain a regular epimorphism of abelian simplicial objects. There is an associated short exact sequence of chain complexes to which we apply Theorem~\ref{Theorem-Yoneda-Connecting-Map}. With the identification of the homology of the kernel chain complex as the derived functors of \LM T_1\RM in~\cite{Tomasthesis} the proof is complete.
\end{proof}

\section{Cohomology, Higher Dimensional Central Extensions,\\ and Satellites} %
\label{sec:HigherExtensions}%

In an abelian environment, Yoneda~\cite{Yoneda-Exact-Sequences} achieved an interpretation of the groups \LM \Ext^n(X,A)\RM in terms of congruence classes of $n$-step extensions:
$$
A \longrightarrow X_n \to \cdots \to X_1 \longrightarrow X.
$$
Here we explain a semiabelian analogue of Yoneda's interpretation of Ext; see~\cite{RVdL2} for details. It is based on categorical Galois theory and the concept of higher central extension. In the case of groups, an alternative interpretation of the Ext groups in terms of \emph{crossed extensions} is available via work of ~\cite{Holt, Huebschmann}. However, it is not clear at this time whether a crossed interpretation of extensions extends to semiabelian categories.

Then we use Yoneda's view of Ext to relate the homological Yoneda lemma to the satellites view of functor derivation.

This section also provides background needed in the following section on universal central extensions.

\subsection{Extensions}
\label{subsec:Extensions}
Let $\ThreeCat$ denote the category: \LM 0 \rightarrow 1 \rightarrow 2\Rm. For \LM n\geq 1\Rm, the category \LM \ThreeCat^n\RM has the initial object \LM i_n\DefEq (0,\dots,0)\RM and the terminal object \LM t_n\DefEq (2,\dots ,2)\Rm. Moreover, it has an embedding \LM \alpha_{e,i}\from \ThreeCat\to \ThreeCat^n\RM parallel to the $i$-th coordinate axis, for each object $e$ whose $i$-th coordinate is $0$.

Now, given objects $X$ and $A$ in $\X$, an \defn{$n$-extension}~\cite{EGVdL, EGoeVdL} \defn{under $A$ and over~$X$} in $\X$ is a functor \LM E\from \ThreeCat^n\to \X\RM which sends \LM i_n\RM to $A$, \Lm t_n\RM to $X$, so that each composite below is a short exact sequence:
$$
\xymatrix@R=5ex@C=3em{
\ThreeCat \ar[r]^-{\alpha_{e,i}} &
 \ThreeCat^n \ar[r]^-{E} &
 \X
}
$$
For example, a $1$-extension under $A$ and over $X$ is just a short exact sequence \LM A=E_0 \to E_1 \to E_2=X\Rm. A $2$-extension under $A$ and over $X$ is a \emph{$3\times 3$ diagram}, in which each row and column is short exact:
$$
\xymatrix@R=5ex@C=3em{
A=E_{0,0} \ar@{{ |>}->}[r] \ar@{{ |>}->}[d] &
 E_{1,0} \ar@{-{ >>}}[r] \ar@{{ |>}->}[d] &
 E_{2,0} \ar@{{ |>}->}[d] \\
E_{0,1} \ar@{{ |>}->}[r] \ar@{-{ >>}}[d] &
 E_{1,1} \ar@{-{ >>}}[r] \ar@{-{ >>}}[d] &
 E_{2,1} \ar@{-{ >>}}[d] \\
E_{0,2} \ar@{{ |>}->}[r] &
 E_{1,2} \ar@{-{ >>}}[r] &
 X=E_{2,2}
}
$$

\subsection{Centrality for Higher Extensions}
\label{subsec:Centrality}
Following the ideas in~\cite{Janelidze:Hopf-talk, Janelidze:Double}, higher central extensions were originally defined through categorical Galois theory in~\cite{EGVdL} with the aim of extending the Brown--Ellis--Hopf formulae~\cite{Brown-Ellis} to a categorical context. Here we use an alternate approach based on the work of Rodelo and Van~der Linden~\cite{RVdL3}, valid in semiabelian categories with enough projectives which satisfy the \emph{Smith is Huq} condition~\ref{SH}. It expresses the centrality condition from categorical Galois theory in terms of Higgins commutator (\ref{Higgins}) properties of a given extension diagram: For a subset \LM R\subseteq n\DefEq \{ 0, \dots , n-1 \} \Rm, let
$$
E(R) \DefEq E_{\varepsilon_0,\dots ,\varepsilon_{n-1}}
\qquad\text{where}\qquad
\varepsilon_i \DefEq
\begin{cases}
1 & \text{if \ $i\in R$} \\
0 & \text{if \ $i\not\in R$.}
\end{cases}
$$

\begin{definition}
\label{def:CentralExtension}%
An $n$-extension $E$ is \defn{central (with respect to abelianisation)} if, for each \LM R\subseteq n\Rm, the Higgins commutator (\ref{Higgins}) of the monomorphisms \LM {E(R)\to E(n) \leftarrow E(S)}\RM vanishes; here $S$ is the complement of $R$ in $n$.
\end{definition}

So, for example, choosing \LM R\DefEq \emptyset\Rm, we see that \LM A=E(\emptyset)\RM is central in \LM E(n)\RM and, hence, in \LM E(R)\Rm, for every \LM R\subseteq n\Rm. In particular, it is an abelian object.

\subsection{Groups of Central $n$-Extensions}
\label{subsec:Groups-n-Extensions}%
As in Yoneda's theory of equivalence classes of $n$-step extensions, we now define a central $n$-extension $E$, under $A$ and over~$X$, to be \defn{congruent} to another such extension $E'$ if there is a natural transformation \LM \tau\from E\to E'\RM which is the identity on $A$ and on $X$.

`Congruence' generates an equivalence relation on central $n$-extensions under~$A$ and over $X$; let \LM \Centr^{n}(X,A)\RM denote the resulting set of equivalence classes. The main results of~\cite{RVdL2, RVdL3}, based on torsor theory, combined provide a natural isomorphism %
\label{iso:CExt=Ext}%
$$
\phi\from \Ext^{n}(X,A) \overset{\cong}{\longrightarrow} \Centr^{n}(X,A)
$$
whenever $\X$ is a semiabelian category with enough projectives satisfying the \emph{Smith is Huq} condition (see~\ref{SH}).

\begin{remark}\label{Rem:CubicalViewpoint}
The term `extension' has been used with different meanings in previous publications related to this article. Beginning with \cite{Janelidze:Double} and later \cite{Donadze-Inassaridze-Porter,EGVdL}, the goal was to find a higher analogue for a regular epimorphism with central kernel. The outcome was termed `$n$-fold central extension'. Subsequently, work toward defining congruence classes of higher central extensions which correspond to cohomology required a refinement of the concept. Such a refinement is proposed in~\cite{RVdL2}. Accordingly, we now use the term `$n$-fold central extension' exclusively for a diagram as defined in \ref{def:CentralExtension}. Such a diagram contains a cube of regular epimorphisms, namely the restriction of \LM E\from \ThreeCat^n\to \X\RM to $\{1,2\}^{n}$. For clarity, following \cite{RVdL2}, we now refer to this cube of regular epimorphisms as an \defn{$n$-cubical extension}.

Thus, with terminology as explained, an $n$-fold extension as defined in \ref{def:CentralExtension} is central  if and only if its cube of regular epimorphisms is central in the sense of categorical Galois theory (with respect to abelianisation).
\end{remark}

\subsection{Satellites}
\label{subsec:Satellites}%
Now let $\X$ be a semiabelian category with enough projectives that satisfies \emph{Smith is Huq}~\eqref{SH}.

Here is a satellites view of the relationship between homology and cohomology established by the Yoneda isomorphism; compare ~\cite{Yoneda-Exact-Sequences, Mitchell:Categories, Janelidze-Satellites, Guitart-Bril} in the abelian case. Sending a central $n$-extension under $A$ and over $X$ to the pair \LM (A,X)\RM yields the span
\begin{equation}\label{D}\tag{$\dagger$}
\xymatrix@C=4em{
\X &
 \CExt^{n}(\X) \ar[l]_-{\text{eval}_{t_n}}^-{\CC} \ar[r]^-{\text{eval}_{i_n}}_-{\D} &
 \AbCore{\X},
}
\end{equation}
where \LM \CExt^n(\X)\RM denotes the category of $n$-fold central extensions in $\X$. The connected components of the fibre of this span form the functor
\[
\Centr^{n}(\cdot,-)\colon \X^{\op}\times \Ab(\X)\to \Ab\colon (X,A)\mapsto \Centr^{n}(X,A),
\]
and these are naturally equivalent to cohomology in the guise of \LM \Ext^n(X,A)\RM by~\cite{RVdL2}. On the other hand, as shown in~\cite{GVdL2}, the $n$-th derived functor $\L_{n}\ab_{\X}\colon\X\to \Ab(\X)$ of the abelianisation functor is the right Kan extension of $\D$ along $\CC$.

We shall now connect this viewpoint with the homological Yoneda isomorphism. Let the example of the category of groups suffice to explain the essence.

\begin{example}[Groups]
Let \LM \ab\from {\Gp\to \Ab}\RM be the abelianisation functor. For a group $X$ and \LM n\geq 1\Rm, the homological Yoneda isomorphism~\ref{Theorem-Yoneda-Derived-Semiab} specialises to
\[
\Nat(\Centr^{n}(X,-),1_\Ab)\cong \H_{n+1}X.
\]
Explicitly, any element $x$ of $\H_{n+1}X$ corresponds to a collection of group homomorphisms, natural in $A$:
\[
\varphi=(\varphi_{A}\colon \Centr^{n}(X,A)\to A)_{A\in\Ob{\Ab}}.
\]
How does this result relate to the satellites interpretation of homology? Corollary~4.10 in~\cite{GVdL2} tells us that the Kan extension $\H_{n+1}X$ can be calculated as the limit of the (large) diagram \LM \D\colon \CExt^n_X(\Gp)\to \Ab\Rm, see~\eqref{D} above. Following Theorem~V.1 in~\cite{MacLane}, this limit may in turn by computed in such a way that an element of its underlying set is given by a cone from the one-point set $*$ to $\D$. That is to say, an element of $\H_{n+1}X$ is determined by a (compatible) choice $\lambda=(\lambda_F)_F$ of elements~$\lambda_F$ of~$\D(F)$, one for each $n$-extension $F$ over $X$, such that $\D(f)(\lambda_F)=\lambda_G$ whenever $f\colon {F\to G}$ is a morphism of $n$-extensions over~$X$. On the other hand, a natural transformation such as~$\varphi$ above is given by the choice of an element $\varphi_A([F])\in A$ for each equivalence class $[F]$ of a central extension~$F$ under~$A$ and over $X$, again in the suitably compatible way. Now of course $A=\D(F)$, so to see that the two types of choice correspond to one another, it suffices that the equivalence classes in $\Centr^{n}(X,A)$ are compatible with the requirements on~$\lambda$. This is clear, because two central extensions $F$ and $G$ under $A$ and over $X$ are congruent when there exists a zigzag between them of which all arrows restrict to $1_{A}$ on the initial object and $1_{X}$ on the terminal object.
\end{example}

\begin{example}[Lie algebras]\label{Example-Lie}
A similar result holds for Lie algebras over a ring~$R$. Such a Lie algebra is abelian if and only if its Lie bracket (= Higgins commutator) is trivial, so ${}_{R}\Lie$ contains ${}_R\Mod=\Ab({}_{R}\Lie)$ as a Birkhoff subcategory. We again take $T$ equal to the abelianisation functor and obtain an isomorphism
\[
\Nat(\Centr^{n}(X,-),\ab_{{}_{R}\Lie})\cong \H_{n+1}X.
\]
\end{example}

\section{Universal Central Extensions}
\label{sec:UCEs}%

Let $\X$ be a semiabelian category with enough projectives, and let \LM T\from \X\to \R\RM be a reflector onto a subcategory which is contained in the abelian core of $\X$. A universal (central) extension over an object $X$ is a universal element (in the sense of~\cite[Section~III]{MacLane}) of an Ext functor
\[
\ExtFnctr{n}{X}{-}|_\R\from \R \to \AbGrps.
\]
In the present section we investigate under which circumstances such a universal extension over an object exists and, when it does, how to construct it.

We need to address the fact that there is a fundamental difference between the definition of universality for $1$-extensions and that for higher extensions: a central $1$-extension over $X$ is universal if it is an initial object in the category of all central $1$-extensions over $X$. On the other hand, in~\ref{Classical} we show that a double central extension over an object~$X$ which is initial amongst such is necessarily the zero double extension over \LM {X=0}\Rm. Our approach avoids this problem by defining universality of a central $n$-extension $U$ in terms of a property of the cohomology class \LM [U]\RM which it represents: \Lm [U]\RM is a representing object for \LM \ExtFnctr{n}{X}{-}|_\R\Rm. For \LM n\geq 1\Rm, we show that a universal $n$-fold central extension $U$ over $X$ and under \LM \LDrvd{T}{n}{X}\RM exists as soon as \LM \LDrvd{T}{n-1}{X}=\cdots=\LDrvd{T}{1}{X}=\LDrvd{T}{0}{X}=0\Rm.

\subsection{Orthogonal Pairs of Subcategories} In a pointed category $\C$ we take \LM \HomFnctr{\C}{X}{F}=0\RM to mean that $X$ is \defn{left orthogonal} to $F$, and that $F$ is \defn{right orthogonal} to $X$. We write \LM X\bot F\RM and \LM X\in {}^{\bot}F\RM or, equivalently, \Lm X^{\bot}\ni F\Rm. We always have $\bot\bot\bot=\bot$.

A pair of full subcategories \LM (\T,\F)\RM of $\C$ is called an \defn{orthogonal pair} if the following orthogonal complement relations hold:
\[
\T = {}^{\bot}\F \quad\text{and}\quad \T^{\bot}=\F.
\]
In a category of modules over a ring, an orthogonal pair of subcategories always forms a torsion theory. In a semiabelian category, \LM (\T,\F)\RM is a torsion theory precisely when $\F$ is reflective in $\C$~\cite{Bourn-Gran-Torsion}.

\subsection{Orthogonal Pairs in a Semiabelian Category}\label{Setting-Orthogonal-Pairs}
Now let $\X$ be a semiabelian category with enough projectives, and let \LM T\from \X\to \R\RM be a reflector onto a subcategory which is contained in the abelian core of $\X$. Then $T$ determines an orthogonal pair \LM (\T,\F)\RM in $\X$ by setting
\[
\T\DefEq{}^{\bot}\R \quad\text{and}\quad \F\coloneq \T^{\bot}.
\]
This is so because \LM \HomFnctr{\X}{X}{M} \cong \HomFnctr{\R}{TX}{M}\RM for all $M$ in $\R$.

\subsection{Acyclicity Classes of Order $n$}
For \LM n\geq 0\Rm, an object $X$ is called \defn{\LM n\Rm-acyclic (with respect to $T$)} if and only if
\[
\LDrvd{T}{0}{X} = \LDrvd{T}{1}{X} = \cdots = \LDrvd{T}{n}{X} = 0.
\]
Let \LM \TrsnCat[n]\RM be the full subcategory of $\X$ containing the $n$-acyclic objects.

\begin{theorem}\label{theorem:XinT_i<->Ext^i(X,LY)=0}
Let \LM T\from \X\to \R\RM be a reflector from a semiabelian variety~$\X$ to a full and replete subcategory $\R$ of its abelian core $\Ab(\X)$. Then an object $X$ of~$\X$ belongs to \LM \TrsnCat[n]\RM if and only if
\[
\ExtFnctr{0}{X}{M} = \cdots = \ExtFnctr{n}{X}{M} = 0\qquad \text{for each $M$ in $\R$.}
\]
\end{theorem}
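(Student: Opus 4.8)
The plan is to prove the two implications separately. The workhorse throughout is Corollary~\ref{Cory-Yoneda-Derived-SemiAb-Adjoint}, i.e.\ the homological Yoneda isomorphism
\[
\NatTrafos{\ExtFnctr{i}{X}{-}}{T|_{\AbCore{\X}}}|_{\R}\;\cong\;\LDrvd{T}{i}{X},
\]
natural in $X$; the \emph{only if} direction additionally uses the universal coefficient theorem~\ref{Universal coefficient theorem}, which supplies the one ingredient that Yoneda's isomorphism alone cannot give.

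For the \emph{if} direction no induction is needed. Assume $\ExtFnctr{i}{X}{M}=0$ for every $M$ in $\R$ and every $0\leq i\leq n$. Then each $\ExtFnctr{i}{X}{-}$ restricts to the constant zero functor on $\R$, so the group $\NatTrafos{\ExtFnctr{i}{X}{-}}{T|_{\AbCore{\X}}}|_{\R}$ is trivial, the zero functor admitting exactly one natural transformation into any functor. By Corollary~\ref{Cory-Yoneda-Derived-SemiAb-Adjoint} this forces $\LDrvd{T}{i}{X}\cong 0$ for $0\leq i\leq n$, that is, $X\in\TrsnCat[n]$.

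For the \emph{only if} direction I would induct on $n$. The base case $n=0$ is a direct computation: right exactness of $T$ identifies $\LDrvd{T}{0}{X}$ with $TX$, while $\H^{0}$ of the cosimplicial abelian group $\HomFnctr{\X}{P(X)}{M}$ equals $\HomFnctr{\X}{X}{M}\cong\HomFnctr{\R}{TX}{M}$ because $P(X)\to X$ is a resolution and $T$ is left adjoint to the inclusion $\R\hookrightarrow\X$; hence $X\in\TrsnCat[0]$ forces $TX=0$ and therefore $\ExtFnctr{0}{X}{M}=0$ for all $M$ in $\R$. For the inductive step, assume $X\in\TrsnCat[n]$. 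Then $X\in\TrsnCat[n-1]$, so by the inductive hypothesis $\ExtFnctr{i}{X}{M}=0$ for all $M$ in $\R$ and $0\leq i\leq n-1$, and only $\ExtFnctr{n}{X}{-}|_{\R}$ remains to be killed. Since $X\in\TrsnCat[n-1]$, the universal coefficient theorem~\ref{Universal coefficient theorem} yields a natural isomorphism $\HomFnctr{\R}{\LDrvd{T}{n}{X}}{-}|_{\R}\cong\ExtFnctr{n}{X}{-}|_{\R}$; and $X\in\TrsnCat[n]$ forces $\LDrvd{T}{n}{X}=0$, so the left-hand side is the zero functor, completing the induction.

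The genuine obstacle is the \emph{only if} direction, and the reason deserves emphasis. From $\LDrvd{T}{i}{X}=0$ the homological Yoneda isomorphism only tells us that $\NatTrafos{\ExtFnctr{i}{X}{-}}{T|_{\AbCore{\X}}}|_{\R}$ vanishes, and one cannot conclude from the absence of non-zero natural transformations into the reflector that the functor $\ExtFnctr{i}{X}{-}|_{\R}$ is itself zero: even an exact functor may admit no non-zero map into the identity without vanishing, as $-\otimes\ZZ[1/p]$ on $\AbGrps$ shows. What makes the argument work is that, once the lower derived functors vanish, $\ExtFnctr{n}{X}{-}|_{\R}$ becomes representable, by $\LDrvd{T}{n}{X}$; this is precisely~\ref{Universal coefficient theorem}, whose proof in turn rests on the classical universal coefficient short exact sequence applied to the chain complex in $\R$ associated to $T(P(X))$ for a simplicial projective resolution $P(X)\to X$ — a complex whose homology is $\LDrvd{T}{\bullet}{X}$ and hence vanishes in the relevant degrees since $X\in\TrsnCat[n]$.
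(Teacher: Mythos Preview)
Your proof is correct, though both halves take a somewhat different route from the paper's.

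For the \emph{only if} direction, the paper does not induct or invoke Theorem~\ref{Universal coefficient theorem}: it applies Lemma~\ref{lem:AbelianUCT} directly to the normalised chain complex $C$ in $\R$ associated to $TP$ for a simplicial projective resolution $P\to X$. Since $X\in\TrsnCat[n]$ means $\H_i C=\LDrvd{T}{i}{X}=0$ for $0\leq i\leq n$, part~(ii) of Lemma~\ref{lem:AbelianUCT} (with $n+1$ in place of~$n$) gives $\H^i(C,M)=\ExtFnctr{i}{X}{M}=0$ for all $M\in\R$ and $0\leq i\leq n$ in one stroke. Your induction via the universal coefficient theorem is valid but circuitous, since~\ref{Universal coefficient theorem} is itself proved from~\ref{lem:AbelianUCT}; the paper simply cuts out the middleman.

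For the \emph{if} direction there is a point to flag. You invoke Corollary~\ref{Cory-Yoneda-Derived-SemiAb-Adjoint} and read the trailing $|_{\R}$ as restricting the domain of both functors to~$\R$, so that the vanishing of $\ExtFnctr{i}{X}{-}$ on~$\R$ trivially kills the Nat. But the paper explicitly omits the proof of~\ref{Cory-Yoneda-Derived-SemiAb-Adjoint} and declares it ``not used in what follows''; its own argument rests on Corollary~\ref{Cory-Yoneda-Derived-SemiAb-RMod}, where the natural transformations are taken over all of $\AbCore{\X}$. The hypothesis only zeroes $\ExtFnctr{i}{X}{M}$ for $M\in\R$, so one must still explain why every $\tau\colon\ExtFnctr{i}{X}{-}\Rightarrow T|_{\AbCore{\X}}$ vanishes at an arbitrary $A\in\AbCore{\X}$. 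The paper handles this with the naturality square along the reflection unit $\vartheta_A\colon A\to TA$: since $TA\in\R$ the corner $\ExtFnctr{i}{X}{TA}$ is zero, while $T\vartheta_A$ is an isomorphism, forcing $\tau_A=0$. Your shortcut is legitimate only if~\ref{Cory-Yoneda-Derived-SemiAb-Adjoint} really delivers the Nat over~$\R$; the paper's reflection-unit argument is what makes the passage from~$\R$ to $\AbCore{\X}$ honest without leaning on that unproved corollary.
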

\begin{proof}
Suppose $X$ belongs to \LM \TrsnCat[n]\Rm, and let \LM P\to X\RM be a simplicial projective resolution of $X$. Then the normalised chain complex $C$ associated to \LM TP\RM is exact in positions \LM 0\leq i\leq n\Rm, augmented by $0$, and positionwise projective in $\R$; see~\ref{subsec:SACat-Adjunctions}. Thus the claim follows from Lemma~\ref{lem:AbelianUCT}.

Now suppose \LM \ExtFnctr{i}{X}{M}=0\RM for all $M$ in $\R$, and \LM 0\leq i\leq n\Rm. To see that \LM \LDrvd{T}{i}{X}\RM vanishes for all \LM 0\leq i\leq n\Rm, we use the higher Yoneda isomorphism  (Theorem~\ref{Cory-Yoneda-Derived-SemiAb-RMod}). Indeed, the abelian group objects of the semiabelian variety $\XX$ have underlying abelian groups, and so:
\[
\NatTrafos{\ExtFnctr{i}{X}{-}}{T|_{\Ab(\X)}} \overset{\cong}{\longrightarrow} \LDrvd{T}{i}{X}.
\]
We claim that the term on the left contains only the zero-transformation. Indeed, if \LM {\tau\from
\ExtFnctr{i}{X}{-}\To T|_{\AbCore{\X}} }\RM is a natural transformation, consider its effect on an arbitrary~$A$ in \LM\Ab(\X)\Rm. The reflection unit \LM \vartheta_{A}\from A\to TA\RM gives the commutative diagram
\[
\xymatrix@R=5ex@C=3em{
\ExtFnctr{i}{X}{A} \ar[r]^-{\tau_A} \ar[d]_{(\vartheta_{A})_\asterisk} &
 TA \ar[d]^{\vartheta_{TA} = T\vartheta_A}_{\cong} \\
\ExtFnctr{i}{X}{TA} \ar[r]_-{\tau_{TA}} &
 T^2A
}
\]
By assumption \LM \ExtFnctr{i}{X}{TA}=0\Rm. So \LM \tau_A=0\Rm, and \LM \LDrvd{T}{i}{X}=0\RM follows.
\end{proof}

\begin{theorem}[Universal coefficient theorem]\label{Universal coefficient theorem}
Let \LM T\from \X\to \R\RM be a reflector from a semiabelian variety $\X$ to a full and replete subcategory of its abelian core. If $X$ in $\X$ is \LM (n-1)\Rm-acyclic, then the functors
\[
\HomFnctr{\R}{\LDrvd{T}{n}{X}}{-}\ ,\ \ExtFnctr{n}{X}{-}|_\R\from \R \to \AbGrps
\]
are naturally equivalent.
\end{theorem}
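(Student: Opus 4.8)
The plan is to reduce the statement to the abelian universal coefficient theorem by resolving $X$ and transporting the computation into the abelian core; the hypothesis of $(n-1)$-acyclicity is exactly what makes that reduction work.

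First I would choose a simplicial projective resolution $P\to X$ in $\X$. By Theorem~\ref{thm:ACat-Reflective-In-SACat} the functor $T$ factors, up to a unique natural isomorphism, through the abelian core, so $TP$ may be regarded as a simplicial object inside $\R\subseteq\Ab(\X)$; write $C\DefEq\N(TP)$ for the associated normalised chain complex. By the definition of the derived functors, $\H_nC\cong\LDrvd{T}{n}{X}$. On the coefficient side, for $A$ in $\R$ the reflection adjunction gives $\hom_\X(P_i,A)\cong\hom_\R(TP_i,A)$, naturally in $A$ and compatibly with the simplicial structure; since Moore and normalised complexes are chain homotopy equivalent, this yields $\ExtFnctr{n}{X}{A}=\H^n\hom_\X(P,A)\cong\H^n\hom_\R(C,A)$, natural in $A\in\R$. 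So it suffices to produce a natural isomorphism $\H^n\hom_\R(C,A)\cong\hom_\R(\H_nC,A)$.

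Next I would bring in acyclicity. Since $X$ is $(n-1)$-acyclic, the complex $C$ is exact in degrees $0,\dots,n-1$ and augmented by $0$; and since $T$ carries projectives of $\X$ to (relative) projectives of $\R$ while each $C_i$ is a retract of $TP_i$, every $C_i$ is projective relative to $\R$ (see~\ref{subsec:SACat-Adjunctions}). This is precisely the hypothesis of the abelian universal coefficient theorem, Lemma~\ref{lem:AbelianUCT}: for a complex of relative projectives, exact below degree $n$ and augmented by $0$, the evaluation map $\H^n\hom_\R(C,A)\to\hom_\R(\H_nC,A)$ is an isomorphism, natural in $A$. Combining this with the identifications of the previous paragraph gives the asserted natural equivalence $\hom_\R(\LDrvd{T}{n}{X},-)\cong\ExtFnctr{n}{X}{-}|_\R$; the case $n=0$ degenerates to the reflection isomorphism of Lemma~\ref{Lemma-YonedaRestriction}.

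The main obstacle is the abelian universal coefficient theorem itself, not the semiabelian bookkeeping: the usual dimension-shifting argument has to be run with \emph{relative} projectives --- projectives for the class of regular epimorphisms of $\R$ inherited from $\X$ --- and one must verify that the short exact sequences of cycles and boundaries extracted from $C$ stay in that admissible class, so that the comparison map is forced to be invertible. The standing hypotheses ($\X$ a semiabelian variety, $\Ab(\X)$ a genuine module category by Example~\ref{exa:AbCore(SAVariety)}, and $\R$ reflective inside it as in~\ref{subsec:SACat-Adjunctions}) are exactly what allows this. A more formal variant is also available: the homological Yoneda isomorphism~\ref{Theorem-Yoneda-Derived-Semiab} pairs $\LDrvd{T}{n}{X}$ with a canonical class in $\ExtFnctr{n}{X}{\LDrvd{T}{n}{X}}$, which induces a natural transformation $\hom_\R(\LDrvd{T}{n}{X},-)\To\ExtFnctr{n}{X}{-}|_\R$, and $(n-1)$-acyclicity is precisely the condition making it invertible; but the chain-level argument above is the most transparent.
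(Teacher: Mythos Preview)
Your argument is correct and follows essentially the same route as the paper: resolve $X$ simplicially, push through $T$ to obtain a positionwise projective chain complex in $\R$ that is augmented by $0$ and exact in degrees $0,\dots,n-1$, then invoke Lemma~\ref{lem:AbelianUCT}. Your extra caution about ``relative'' projectivity is unnecessary here---under the standing hypotheses and via~\ref{subsec:SACat-Adjunctions}, the $C_i$ are genuine projectives in $\R$, so Lemma~\ref{lem:AbelianUCT} applies as stated---but this does no harm to the argument.
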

\begin{proof}
Let \LM P\to X\RM be a simplicial projective resolution of $X$ in $\X$. Then the chain complex $C$ associated to \LM TP\RM is augmented by $0$, positionwise projective in $\R$, and exact in positions \LM 0\leq i\leq n-1\Rm. It satisfies \LM \LDrvd{T}{n}{X}=\H_nC\RM and \LM \ExtFnctr{n}{X}{M}=\H^n(C,M)\Rm. Now the claim follows from Lemma~\ref{lem:AbelianUCT}.
\end{proof}

\begin{definition}
Let $\R$ be a reflective subcategory of a semiabelian category~$\X$ which is contained in \LM \AbCore{\X}\Rm. For $X$ in $\X$, and $M$ in $\R$, an element~$U$ of \LM \ExtFnctr{n}{X}{M}\RM is called a \defn{$\R$-universal} if the natural transformation
\[
\upsilon^{U}\from \HomFnctr{\R}{ M }{ - }\To \Ext^{n}(X,-)|_{\R}\from \R \longrightarrow \AbGrps
\]
determined by \LM \upsilon^{U}_{N}(f\colon M\to N) \DefEq\ExtFnctr{n}{X}{f}(U)\RM is a natural equivalence.
\end{definition}

See Section~III of~\cite{MacLane} for a detailed account of the relationship between universal elements and representable functors.

\begin{theorem}[Existence of universal central extensions]%
\label{thm:UCEsExst}%
Let $\X$ be a semiabelian variety satisfying the \emph{Smith is Huq} condition~\ref{SH}, and let $\R$ be a full and replete reflective subcategory of its abelian core. Then every object $X$ in~\LM \TrsnCat[n-1]\RM has a universal $\R$-central $n$-extension under \LM \LDrvd{T}{n}{X}\RM and over $X$.
\end{theorem}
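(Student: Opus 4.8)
The plan is to assemble three ingredients already at hand: the universal coefficient theorem~\ref{Universal coefficient theorem}, the ordinary Yoneda lemma, and the natural isomorphism $\phi\colon\ExtFnctr{n}{X}{A}\overset{\cong}{\to}\Centr^{n}(X,A)$ recorded in~\ref{iso:CExt=Ext}. Throughout, write $M\DefEq\LDrvd{T}{n}{X}$, an object of~$\R$; recall also that a variety always has enough (regular) projectives, so~$\X$ meets the standing hypotheses of all the cited results.

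First I would exploit the acyclicity hypothesis. Since $X$ lies in $\TrsnCat[n-1]$ it is $(n-1)$-acyclic, so Theorem~\ref{Universal coefficient theorem} supplies a natural equivalence of functors $\R\to\AbGrps$,
\[
\HomFnctr{\R}{M}{-}\overset{\cong}{\To}\ExtFnctr{n}{X}{-}|_{\R}.
\]
Next I would apply the ordinary Yoneda lemma to the functor $\ExtFnctr{n}{X}{-}|_{\R}\colon\R\to\AbGrps$ at the object $M$: the assignment $U\mapsto\upsilon^{U}$, where $\upsilon^{U}_{N}(f)\DefEq\ExtFnctr{n}{X}{f}(U)$, is a bijection from $\ExtFnctr{n}{X}{M}$ onto the class of natural transformations $\HomFnctr{\R}{M}{-}\To\ExtFnctr{n}{X}{-}|_{\R}$, with inverse $\upsilon\mapsto\upsilon_{M}(\IdMap{M})$. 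Feeding the equivalence above into this bijection produces an element $U\in\ExtFnctr{n}{X}{M}$ for which $\upsilon^{U}$ is a natural equivalence; that is, $U$ is $\R$-universal in the sense of the definition preceding the statement.

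It remains to realise the cohomology class $U$ by an honest central $n$-extension. Here I would invoke that $\X$ is a semiabelian variety with enough projectives satisfying \emph{Smith is Huq}, so that the natural isomorphism $\phi\colon\ExtFnctr{n}{X}{M}\overset{\cong}{\to}\Centr^{n}(X,M)$ of~\ref{iso:CExt=Ext} is available. By construction $\Centr^{n}(X,M)$ is a set of congruence classes of central $n$-extensions, so one may select a central $n$-extension~$E$ under $M=\LDrvd{T}{n}{X}$ and over $X$ with $[E]=\phi(U)$. Naturality of $\phi$ in the coefficient variable identifies the natural transformation $\HomFnctr{\R}{M}{-}\To\Centr^{n}(X,-)|_{\R}$ determined by $[E]$ with $\phi\comp\upsilon^{U}$, which is a natural equivalence; hence $E$ represents the $\R$-universal class $U$, and is the sought universal $\R$-central $n$-extension under $\LDrvd{T}{n}{X}$ and over $X$.

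The delicate point is bookkeeping rather than a genuine obstacle: one must check that the equivalence delivered by the universal coefficient theorem is corepresented by a single element of $\ExtFnctr{n}{X}{M}$ (this is exactly Yoneda), and that $\phi$ is natural in its second argument so that $\R$-universality transports through it. The substantive work---Theorem~\ref{Universal coefficient theorem} and the identification $\ExtFnctr{n}{X}{-}\cong\Centr^{n}(X,-)$ of~\cite{RVdL2, RVdL3}---is already in place, so the argument amounts to combining these facts.
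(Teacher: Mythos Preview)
Your proposal is correct and follows essentially the same route as the paper: the paper's proof simply takes the element of $\Centr^{n}(X,\LDrvd{T}{n}{X})\cong\Ext^{n}(X,\LDrvd{T}{n}{X})\cong\HomFnctr{\R}{\LDrvd{T}{n}{X}}{\LDrvd{T}{n}{X}}$ corresponding to $\IdMap{\LDrvd{T}{n}{X}}$, which is exactly the Yoneda element you extract from the universal coefficient equivalence. Your write-up is more explicit about the Yoneda bookkeeping and the naturality of $\phi$, but the underlying argument is identical.
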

\begin{proof}
Let \LM T\from \X\to \R\RM be a reflector. The element
$$
U \in \Centr^{n}(X,\LDrvd{T}{n}{X}) \overset{\ref{iso:CExt=Ext}}{\cong} \Ext^n(X,\LDrvd{T}{n}{X}) \overset{\ref{Universal coefficient theorem}}{\cong} \HomFnctr{\R}{\LDrvd{T}{n}{X}}{\LDrvd{T}{n}{X}}
$$
corresponding to \LM \IdMap{\LDrvd{T}{n}{X}}\RM is a universal $\R$-central $n$-extension.
\end{proof}

\begin{remark}
We say that an $\R$-central $n$-extension is \defn{universal} if it represents a universal $\R$-central $n$-extension class.
\end{remark}

\begin{remark}\label{Remark-Terminology}
Like in the one-dimensional case (see the last sentence of~\ref{LES-SemiAbelian}), the terminology ``$\R$-central $n$-extension'' used here agrees with the concept of ``$n$-extension in $\X$, central with respect to the reflective subcategory $\R$'' from categorical Galois theory. Indeed, in~\cite{Everaert-Gran-TT} the latter are characterised as central $n$-extensions $E$ in the sense of~\ref{subsec:Groups-n-Extensions} of which the initial object $\D(E)$ lies in~$\R$.
\end{remark}

\begin{example}
Let us interpret Theorem~\ref{thm:UCEsExst} in the case where \LM \X=\Grps\RM is the category of groups, and the reflector \LM T\from \Grps\to \AbGrps\EqDef\R \RM is abelianisation: If \LM n\geq 1\Rm, then every group $X$ with \LM \H_{n}(X,\ZNr)=\dots=\H_{1}(X,\ZNr)=0\RM has an \LM\AbGrps\Rm-universal central $n$-extension $U$. (Note the dimension shift here: \LM \LDrvd{\ab_{\Gp}}{n}{X} \cong \H_{n+1}(X, \ZNr)\Rm.) --- In other words, $U$ is a congruence class of $n$-central extensions under \LM \H_{n+1}(X,\ZNr)\RM and over $X$, and for every $n$-central extension $E$ under an abelian group $A$ and over~$X$ there exists a unique morphism \LM f\from \H_{n+1}(X,\ZNr)\to A\RM such that \LM f_*(U)\RM represents the congruence class of~$E$.

In \LM \Grps\RM the representability of \LM \H^{n+1}(X,\ZNr)\RM is already assured if \LM \H_n(X,\ZNr)\RM is torsion free; this follows from the universal coefficient theorem for chain complexes of free $\ZNr$-modules. It means that the general theory of universal $\R$-central extensions developed here provides sufficient existence conditions which may be stronger than necessary in particular semiabelian varieties.

The interpretation of Theorem~\ref{thm:UCEsExst} in the case where $\X$ is the category of Lie algebras over some commutative unital ring $R$ is similar.
\end{example}

\subsection{How to Construct Universal Central Extensions}\label{Construction}
In what follows we assume that $\R$ is a Birkhoff subcategory of $\Ab(\X)$---in other words, $\R$ is an abelian subvariety of $\X$. For any object $X$ of~$\X$ we write the kernel of the (regular epimorphic) unit ${X\to TX}$ of the reflection from~$\X$ to~$\R$ as a commutator $[X,X]_{\R}$.

Furthermore, we adopt the cubical view on $n$-fold extensions of~\cite{EGVdL} recalled in Remark~\ref{Rem:CubicalViewpoint}. An \defn{$n$-fold presentation} of an object $X$ is an $n$-fold cubic extension over $X$, see \ref{Rem:CubicalViewpoint}, in which all regular epimorphisms have a projective domain. It may be obtained from the truncation of a simplicial projective resolution in position \LM (n-1)\Rm.

We are going to prove that, given an $(n-1)$-acyclic object $X$, its universal $n$-fold central extension may be constructed as follows:
\begin{enumerate}
\item consider an $n$-fold presentation $F$ of $X$;
\item centralise $F$ to obtain the $n$-cubic central extension  \LM T_{n}F\Rm;
\item take commutators pointwise to obtain the $n$-fold arrow  \LM [T_{n}F,T_{n}F]_{\R}\Rm;
\item take kernels to obtain a  \LM \langle 3\rangle ^n\Rm-diagram $U$.
\end{enumerate}
It turns out that $U$ is weakly initial among all $n$-fold central extension over~$X$. Moreover, its direction is \LM \LDrvd{T}{n}{X}\Rm. It follows that the cohomology class represented by $U$ is indeed a universal central extension of $X$. To see this, the main difficulty lies in proving the following key lemma.

\begin{lemma}\label{Key Lemma}
\LM [T_{n}F,T_{n}F]_{\R}\RM is an $n$-cubic extension.
\end{lemma}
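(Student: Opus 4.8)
The statement to prove is that the pointwise commutator object $[T_nF, T_nF]_\R$, built from the centralised $n$-cubic extension $T_nF$, is again an $n$-cubic extension, i.e.\ each of its faces in the relevant directions is a regular epimorphism. The plan is to argue by induction on $n$, using at each stage the explicit description of centralisation via iterated Higgins commutators from~\ref{def:CentralExtension} together with the fact (\ref{thm:ACat-Reflective-In-SACat}) that the reflector $T$ commutes with abelianisation, hence with the relevant colimit constructions, up to natural isomorphism.

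First I would fix notation: write $F$ for the chosen $n$-fold presentation of $X$, viewed as a functor $\ThreeCat^n\to\X$ restricted to the cube $\{1,2\}^n$, and $T_nF$ for its centralisation. I would recall from~\ref{Rem:CubicalViewpoint} and~\ref{subsec:Centrality} exactly how $T_nF$ is obtained from $F$: it is the largest central quotient, built by dividing out, in each direction, the appropriate Higgins commutators so that the conditions of Definition~\ref{def:CentralExtension} hold. The key structural input is that each face map of $T_nF$ is a regular epimorphism (being a quotient of a face map of $F$, which is one by hypothesis), and that the kernel object $A = T_nF(\emptyset)$ is abelian and central in $T_nF(n)$. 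Then $[T_nF, T_nF]_\R$ is the $\langle 3\rangle^n$-diagram obtained by replacing each object $T_nF(R)$ by the commutator subobject $[T_nF(R), T_nF(R)]_\R = \ker(T_nF(R)\to T\,T_nF(R))$, with face maps the restrictions.

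The main steps are: (1) show the face maps of $[T_nF,T_nF]_\R$ are regular epimorphisms; (2) show the induced sequences along each coordinate axis are short exact, not merely right exact. For step (1), the point is that for a regular epimorphism $g\colon P\to Q$ between objects of $\X$, the induced map $[P,P]_\R\to[Q,Q]_\R$ is again a regular epimorphism: this is because the reflection $T$ preserves regular epimorphisms (being a left adjoint, hence right exact), so applying the snake lemma / $(3\times 3)$-lemma to the comparison of the two short exact sequences $0\to[P,P]_\R\to P\to TP\to 0$ and $0\to[Q,Q]_\R\to Q\to TQ\to 0$ yields a regular epimorphism on kernels. This handles every face of the commutator cube. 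For step (2) I would use that $T_nF$ is a central extension, so that along each axis the kernel inclusion is a \emph{normal} (proper) monomorphism with abelian, central cokernel-kernel, and then verify that intersecting with $[-,-]_\R$ preserves this — here the Smith-is-Huq hypothesis~\ref{SH} and the commutator calculus of~\cite{MM-NC, RVdL3} enter, guaranteeing that the relevant Higgins commutators behave well under the pointwise operation.

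The hard part will be step (2): controlling exactness, not just right-exactness, of the sequences obtained by restricting to the commutator subobjects along each coordinate direction. Passing to $[-,-]_\R$ pointwise is a priori only a right-exact operation (it is a kernel of a reflection, and reflections need not preserve kernels), so the genuine content is to show that no homology is created — equivalently, that the connecting maps in the relevant long exact sequences vanish. I expect this to follow from the centrality of $T_nF$ combined with the characterisation of higher centrality in terms of vanishing Higgins commutators (Definition~\ref{def:CentralExtension}): centrality forces the commutator subobjects to split off compatibly across the cube, which is exactly what is needed for the restricted sequences to stay short exact. Assembling this coherently across all $2^n$ subsets $R\subseteq n$, using the inductive hypothesis for the sub-cubes $\{1,2\}^{n-1}$, is the technical heart of the argument.
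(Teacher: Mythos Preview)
Your proposal has a genuine gap at its foundation: you have misidentified what an $n$-cubic extension is. For $n\geq 2$ it is \emph{not} sufficient that every face map be a regular epimorphism and that the axial short sequences be exact. The definition from~\cite{EGVdL} is inductive: an $n$-cubic extension is a morphism of $(n-1)$-cubic extensions whose induced comparison arrow to the pullback is itself an $(n-1)$-cubic extension. Already for $n=2$ this demands that the map from the initial corner to the pullback of the opposite span be a regular epimorphism---a condition neither your step~(1) nor your step~(2) touches. Your step~(1) does correctly show that the pointwise commutator of a regular epimorphism is again a regular epimorphism (this is the regular-pushout property of the Birkhoff reflection square), but passing from ``all face maps are regular epimorphisms'' to ``the cube is an $n$-cubic extension'' requires comparing $[-,-]_{\R}$ with pullbacks, and commutators do not preserve pullbacks.

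More seriously, your plan never makes essential use of the hypothesis that $X$ is $(n-1)$-acyclic, and without it the lemma fails. The paper organises the entire argument around this hypothesis. One first proves the preceding Proposition: for an $n$-presentation $C$ of an $(n-1)$-acyclic object, the reflection unit $C\to TC$ is an $(n+1)$-cubic extension over~$0$. Its proof factors $C\to TC$ as $C\to T_nC\to TC$; the first factor is an $(n+1)$-cubic extension by the strong $\mathcal{E}$-Birkhoff property~\cite[Lemma~4.3]{EGVdL}, and the second becomes one by the induction hypothesis \emph{once} acyclicity and the Hopf formula~\cite[Remark~5.12]{EverHopf} identify $T_nC$ with the trivialisation of~$C$. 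The Key Lemma is then a one-liner: $[T_nF,T_nF]_{\R}$ is the kernel, in the new direction, of the $(n+1)$-cubic extension just produced, and by~\cite[Proposition~3.9]{EGVdL} such a kernel is automatically an $n$-cubic extension. Your step~(1) is in effect the $n=1$ shadow of this Proposition; the substance of the lemma lies in lifting it to higher~$n$, and that lift is exactly where acyclicity is consumed.
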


It is an immediate consequence of the following general result which relates higher acyclicity with properties of higher extensions.

\begin{proposition}
Let $C$ be an $n$-presentation of an $(n-1)$-acyclic object $X$ in~$\X$. Then the diagram $C\to TC$ in $\X$ is an  \LM (n+1)\Rm-cubic extension over $0$.
\end{proposition}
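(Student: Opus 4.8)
The plan is to prove the statement by induction on $n$, the base case $n=1$ being the observation that a $1$-presentation of a $0$-acyclic object $X$ (i.e.\ an object with $TX=0$) is a projective presentation $0 \to R \to P \to X \to 0$ with $TP \twoheadrightarrow TX = 0$, so that the comparison $C \to TC$, which here is the single regular epimorphism $P \to TP$, together with $X \to 0$, assembles into a square of regular epimorphisms over $0$; that this square is a ($2$-cubic) extension---i.e.\ that the induced map $P \to TP \times_0 X = TP \times X$ is a regular epimorphism, equivalently that $P \twoheadrightarrow TP$ has kernel mapping onto $R = \ker(P \to X)$---is exactly the statement that $\L_0 T(X) = TX$ and $\L_1 T(X)$ vanish in the relevant sense, which is part of the hypothesis that $X$ is $0$-acyclic. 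More precisely, I would phrase the whole argument in the language of the Moore complex: an $n$-presentation $C$ of $X$ corresponds (up to the usual dictionary of~\ref{Rem:CubicalViewpoint}) to the $(n-1)$-truncation of a simplicial projective resolution $P \to X$, and the diagram $C \to TC$ corresponds to the induced map of truncated simplicial objects $P \to TP$ together with the augmentation to $X \to TX$; being an $(n+1)$-cubic extension over $0$ translates into the exactness of the associated (normalised) chain complex in the appropriate range together with its augmentation being $0$.

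The key steps, in order, are: (1) fix the dictionary between $n$-fold cubic extensions over $X$ and $(n-1)$-truncated simplicial projective resolutions, so that ``$C \to TC$ is an $(n+1)$-cubic extension over $0$'' becomes a statement about the exactness of a chain complex; (2) let $P \to X$ be a simplicial projective resolution whose $(n-1)$-truncation is $C$, apply $T$, and let $D$ be the normalised chain complex of $TP$, augmented by $TX$; (3) use $(n-1)$-acyclicity of $X$, i.e.\ $\L_0 T(X) = \dots = \L_{n-1}T(X) = 0$ together with $\L_0 T(X) = TX$, to conclude that $D$ (augmented by $0$) is exact in degrees $0$ through $n-1$; (4) translate this exactness back, via the dictionary of step (1) and the fact---which uses~\ref{thm:ACat-Reflective-In-SACat}, that $T$ commutes with abelianisation, and the Dold--Kan correspondence in $\AbCore{\X}$---into the assertion that the cube $C \to TC$ is an $(n+1)$-cubic extension over $0$. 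Step (3) is where the hypothesis is consumed: $\L_i T(X) = \H_i(TP)$, so vanishing of $\L_i T$ for $i \le n-1$ is precisely exactness of $TP$ (equivalently of $D$) in those degrees, and the augmentation $\L_0 T(X) = TX$ must be seen to vanish as well, which is the $0$-acyclicity at the bottom.

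The main obstacle I expect is step (4), the bookkeeping that turns ``exact chain complex in a range'' into ``$(n+1)$-cubic extension'', that is, verifying that \emph{every} face of the $(n+1)$-cube of regular epimorphisms built from $C \to TC$ is itself an extension (each comparison map to the relevant fibre product is a regular epimorphism) and not merely the ``outermost'' one; this is the higher-dimensional incarnation of the Kan/acyclicity condition and is the technical heart of~\cite{EGVdL}. I would handle it by an auxiliary induction on the dimension of the faces, using at the inductive step that the kernel functors and the commutator $[-,-]_{\R}$ behave well under the short exact sequences supplied by the truncated resolution, and invoking the Short Five Lemma together with properness of $T$ (sequentially right exact functors are proper, as noted after Definition~\ref{def:Sequentially(R)EFunctor}) to propagate the regular-epimorphism property from lower faces to higher ones. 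The remaining ingredients---that $C$ is positionwise projective and that $TC$ is positionwise in $\R$---are immediate from~\ref{subsec:SACat-Adjunctions} and the definition of an $n$-presentation, and feed directly into Lemma~\ref{lem:AbelianUCT}-style arguments already used in~\ref{theorem:XinT_i<->Ext^i(X,LY)=0} and~\ref{Universal coefficient theorem}.
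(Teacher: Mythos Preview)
Your approach is genuinely different from the paper's, and the gap you yourself flag at step~(4) is real and not closed by the tools you propose.

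The paper never passes through chain complexes or Dold--Kan. Instead it factors the unit as $C \to T_nC \to TC$, where $T_nC$ is the $n$-fold centralisation. The first arrow is an $(n+1)$-cubic extension for \emph{any} $n$-cubic extension $C$, with no acyclicity hypothesis, by the strong $\mathcal{E}$-Birkhoff property~\cite[Definition~2.5, Lemma~4.3]{EGVdL}. Acyclicity enters only for the second arrow: by the Hopf formula~\cite[Remark~5.12]{EverHopf}, the hypothesis forces $T_nC$ to coincide with the trivialisation of $C$, after which the induction hypothesis handles $T_nC\to TC$. Closure of $(n+1)$-cubic extensions under composition finishes the argument.

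Your proposed dictionary---exactness of the Moore complex of $TP$ in a range $\Longleftrightarrow$ $C\to TC$ is an $(n+1)$-cubic extension---is where the plan breaks. The cubic-extension condition demands that certain comparison maps to pullbacks, computed in the \emph{semiabelian} category $\X$ and involving the non-abelian vertices of $C$, be regular epimorphisms; this is not a condition that lives in $\AbCore{\X}$ and is not detected by a normalised chain complex there. Neither the Short Five Lemma (which yields isomorphisms, not surjections onto pullbacks) nor properness of $T$ manufactures these regular epimorphisms; what does is precisely the strong $\mathcal{E}$-Birkhoff machinery you omit. Your base case is also slightly off: $0$-acyclicity means only $\L_0T(X)=0$, not $\L_1T(X)=0$, and in fact the double-extension property for $n=1$ requires no derived-functor vanishing at all---it holds because the naturality square of a Birkhoff reflection at a regular epimorphism is a pushout of regular epimorphisms, hence automatically a regular pushout in any semiabelian category.
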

\begin{proof}
The object $X$ being $0$-acyclic means precisely that $C\to TC$ is an $(n+1)$-fold arrow over $0=TX$.

We prove that $C\to TC$ is an $(n+1)$-cubic extension by induction on $n$. For $n=1$ the diagram $C\to TC$ is a double cubic extension in $\X$ by sequential right exactness of~$T$ and the fact that in any semi-abelian category, all pushout squares of regular epimorphisms are regular pushouts.

Now suppose that the property holds for all $1\leq k < n$ and let $C$ be an $n$-cubic extension. The strong $\mathcal{E}$-Birkhoff property~\cite[Definition~2.5, Lemma~4.3]{EGVdL} tells us that $C\to T_{n}C$ is an ${(n+1)}$-cubic extension. On the other hand, by the Hopf formula, $n$-acyclicity of $C$ means precisely that $T_{n}C$ is the trivialisation of~$C$: see Remark~5.12 in~\cite{EverHopf}. The induction hypothesis, however, tells us that then $T_{n}C\to TC$ is an $(n+1)$-cubic extension. The result now follows because ${(n+1)}$-cubic extensions are closed under composition.
\end{proof}

\begin{proof}[Proof of Lemma~\ref{Key Lemma}]
By Proposition~3.9 in~\cite{EGVdL}, it suffices to note that $[C,C]_{\R}$ is the kernel of $C\to TC$.
\end{proof}

This easily gives us:

\begin{proposition}
$U$ is weakly initial among the $n$-fold central extensions over~$X$.
\end{proposition}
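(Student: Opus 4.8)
The plan is to obtain the required morphism by lifting an $n$-fold presentation of $X$ into the given extension, pushing the resulting comparison map through the centralisation reflector $T_{n}$, and then restricting to $\R$-commutators. So let $E$ be an arbitrary $n$-fold central extension over $X$; the task is to exhibit a morphism $U\to E$ of $n$-fold central extensions over $X$. First note that, since $X$ is $(n-1)$-acyclic, it is in particular $0$-acyclic, so $TX\cong\LDrvd{T}{0}{X}=0$ and hence $[X,X]_{\R}$, the kernel of the unit $X\to TX$, equals $X$. Therefore the terminal vertex of the $n$-cubic diagram $[T_{n}F,T_{n}F]_{\R}$, and so also that of the $\ThreeCat^{n}$-diagram $U$, is $X$ itself; this is exactly what makes $U$ an $n$-fold central extension \emph{over} $X$, its direction being $\LDrvd{T}{n}{X}$. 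Write $E'$ for the $n$-cubic central extension obtained by restricting $E$ to the cube $\{1,2\}^{n}$; by Remark~\ref{Rem:CubicalViewpoint} it is again central, and $E$ is recovered from $E'$ by forming kernels, since every $\ThreeCat$-slice of an $n$-fold extension is short exact.

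Next I would invoke the comparison theorem for higher presentations (cf.~\cite{EGVdL}): since the non-terminal vertices of the $n$-fold presentation $F$ are projective, one may build, by induction over the vertices of the cube --- starting at the terminal vertex $X$, where the map is $1_{X}$, and lifting at each further vertex the already-defined part along the relevant regular epimorphism of $E'$ --- a morphism $\psi\from F\to E'$ of $n$-cubic extensions lying over $1_{X}$. Because $E'$ is central and $T_{n}$ is the centralisation reflector from $n$-cubic extensions over $X$ onto the central ones, $\psi$ factors through the reflection unit $F\to T_{n}F$, yielding a morphism $T_{n}F\to E'$ over $1_{X}$.

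Finally, since $[-,-]_{\R}$ is a subfunctor of the identity, restricting $T_{n}F\to E'$ along the inclusion $[T_{n}F,T_{n}F]_{\R}\hookrightarrow T_{n}F$ gives a morphism of $n$-cubic diagrams $[T_{n}F,T_{n}F]_{\R}\to E'$ which is $1_{X}$ at the terminal vertex. Both $U$ and $E$ are obtained from the $n$-cubic extensions $[T_{n}F,T_{n}F]_{\R}$ and $E'$ by forming kernels --- for $U$ this is step~(4) of the construction, for $E$ it was noted above --- so functoriality of kernels turns this cube morphism into a morphism $U\to E$ of $\ThreeCat^{n}$-diagrams, which is then a morphism of $n$-fold central extensions over $X$. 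I expect the delicate step to be the comparison map $\psi$: one must ensure that the successive lifts can be chosen compatibly with \emph{all} the maps of the cube, and this is precisely where the defining property of an $n$-cubic extension --- regularity of the comparison maps into the limits of punctured sub-cubes --- is used. Everything else is routine functoriality, built on the Key Lemma~\ref{Key Lemma}, which guarantees that $[T_{n}F,T_{n}F]_{\R}$ really is an $n$-cubic extension and hence that $U$ is a genuine $n$-fold central extension.
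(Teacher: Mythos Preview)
Your argument is correct and follows essentially the same line as the paper's own proof: $F$ is weakly initial among cubic extensions because it is a presentation, $T_nF$ is weakly initial among central cubic extensions because $T_n$ is a reflector, and the subobject $[T_nF,T_nF]_{\R}$ inherits weak initiality by composition with the inclusion, the latter being the identity at the terminal vertex since $TX=0$. The only quibble is your parenthetical remark that the direction of $U$ is $\LDrvd{T}{n}{X}$: this is proved in the \emph{next} proposition and is not needed here, so you should not invoke it.
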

\begin{proof}
Since $F$ is a weakly initial cubic extension, its centralisation $T_{n}F$ is weakly initial central. The $n$-fold arrow \LM [T_{n}F,T_{n}F]_{\R}\Rm, being a sub--cubic extension of $T_{n}F$ by Lemma~\ref{Key Lemma}, is also weakly initial central. Since \LM T_{n}F\to T(T_{n}F)=T(F)\RM is a cubic extension over~$0$, the extension $U$ is over $X$.
\end{proof}

\begin{proposition}
The direction of $U$ is isomorphic to \LM \LDrvd{T}{n}{X}\Rm. Therefore, the congruence class represented by $U$ is a universal central $n$-extension of~$X$.
\end{proposition}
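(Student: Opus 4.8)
The direction of $U$ is its value $U(i_{n})$ at the initial object $i_{n}$ of $\ThreeCat^{n}$, and by the construction of $U$ this is the total (iterated) kernel of the $n$-cube $[T_{n}F,T_{n}F]_{\R}$, which really is an $n$-cubic extension by Lemma~\ref{Key Lemma}, so the total kernel is a well-defined object of $\R$. The plan is to identify this total kernel with $\LDrvd{T}{n}{X}$ by a Brown--Ellis--Hopf-type computation. Since for any $Y$ the object $[Y,Y]_{\R}$ is by definition the kernel of the reflection unit $Y\to TY$, and since $T(T_{n}F)=T(F)$, forming kernels levelwise produces a short exact sequence of $n$-cubes $0\to [T_{n}F,T_{n}F]_{\R}\to T_{n}F\to T(F)\to 0$ whose right-hand leg, read as an $(n+1)$-cube, is precisely the $(n+1)$-cubic extension over $0$ appearing in the proof of the previous Proposition. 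Feeding this into the exactness machinery for $n$-cubes and using that $X$ is $(n-1)$-acyclic to kill the relevant comparison terms through degree $n$, one finds that the total kernel of $[T_{n}F,T_{n}F]_{\R}$ computes $\LDrvd{T}{n}{X}$ via the higher Hopf formula expressing $\H_{n}T(P)$ in terms of the $(n-1)$-truncation $F$ of a simplicial projective resolution $P$ of $X$; compare~\cite{EGVdL, EverHopf, GVdL2}. This establishes $U(i_{n})\cong\LDrvd{T}{n}{X}$.

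For the second assertion I would argue as follows. The diagram $U$ is a central $n$-extension over $X$ whose direction is $\LDrvd{T}{n}{X}$, so it represents a class $[U]\in\Centr^{n}(X,\LDrvd{T}{n}{X})$, which corresponds to a class in $\ExtFnctr{n}{X}{\LDrvd{T}{n}{X}}$ under the isomorphism~\ref{iso:CExt=Ext} of~\cite{RVdL2}. Since $X$ is $(n-1)$-acyclic, the universal coefficient theorem~\ref{Universal coefficient theorem} supplies a natural equivalence $\ExtFnctr{n}{X}{-}|_{\R}\cong\HomFnctr{\R}{\LDrvd{T}{n}{X}}{-}$, and the task reduces to showing that $[U]$ corresponds under it to the identity of $\LDrvd{T}{n}{X}$; equivalently, that the natural transformation $\upsilon^{[U]}\from \HomFnctr{\R}{\LDrvd{T}{n}{X}}{-}\To\ExtFnctr{n}{X}{-}|_{\R}$, $f\mapsto f_{*}[U]$, is a natural equivalence. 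One half of this is cheap: by the previous Proposition $U$ is weakly initial among the $n$-fold central extensions over $X$, so every such extension $E$ receives a morphism from $U$, and reading off its effect on directions exhibits $\upsilon^{[U]}$ as componentwise surjective --- that is, the endomorphism $g$ of $\LDrvd{T}{n}{X}$ corresponding to $[U]$ is a split monomorphism in $\R$.

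The step I expect to be the main obstacle is upgrading ``split monomorphism'' to ``isomorphism'': weak initiality alone does not suffice, since a proper subvariety $\R$ of $\Ab(\X)$ may well contain objects with split-but-not-invertible endomorphisms. The key point is that the identification $U(i_{n})\cong\LDrvd{T}{n}{X}$ obtained above is not abstract. The total-kernel (Hopf) description of $\LDrvd{T}{n}{X}$ and the homological description $\LDrvd{T}{n}{X}\cong\H_{n}T(P)$ that underlies~\ref{Universal coefficient theorem} must be matched so as to be mutually compatible, whereupon $[U]$ becomes tautologically the characteristic class of the presentation $F$, i.e.\ the universal element; carrying this out amounts to re-tracing a piece of the Hopf-formula theory of~\cite{EGVdL, EverHopf} together with the torsor correspondence of~\cite{RVdL2} in a compatible way, which forces $g=\IdMap{\LDrvd{T}{n}{X}}$. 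With $\upsilon^{[U]}$ thus a natural equivalence, $[U]$ is an $\R$-universal element of $\ExtFnctr{n}{X}{-}|_{\R}$ in the sense of~\cite[Section~III]{MacLane}, and therefore, by definition, $U$ represents a universal $\R$-central $n$-extension of $X$, consistently with the independent existence statement~\ref{thm:UCEsExst}.
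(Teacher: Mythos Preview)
Your first part is essentially the paper's argument, but the paper is considerably more direct: rather than invoking ``exactness machinery for $n$-cubes'' abstractly, it simply computes the direction of $U$. Since $T_{n}F$ is obtained from $F$ by dividing the object $T_{n}[F]$ out of $F(n)$, the direction of $T_{n}F$ is $\D(F)/T_{n}[F]$; restricting to the commutator subextension $[T_{n}F,T_{n}F]_{\R}$ then restricts the direction to
\[
\frac{\D(F)\cap [F(n),F(n)]_{\R}}{T_{n}[F]},
\]
and this is literally the Hopf formula for $\L_{n}T(X)$ from~\cite[Theorem~8.1]{EGVdL}. So there is no need for an inductive exactness argument here; the identification is a one-line match of formulas.

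For the second part you are considerably more careful than the paper, whose entire argument reads: ``the needed inverse to $\upsilon^{U}$ is determined by the weak universality of $U$.'' Your observation is correct that weak initiality of $U$ only makes $\upsilon^{U}$ componentwise surjective, which under the universal coefficient isomorphism translates into the corresponding endomorphism $g$ of $\L_{n}T(X)$ being merely split monic; and in a general abelian $\R$ that is not enough. The paper does not spell out why the map it writes down is a \emph{two-sided} inverse, and your diagnosis---that one must match the Hopf-formula identification of $\D(U)$ with the homological identification $\L_{n}T(X)\cong\H_{n}T(P)$ underlying Theorem~\ref{Universal coefficient theorem}, so that $[U]$ becomes by construction the universal element---is exactly what is needed to close the argument. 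In other words: the paper's first paragraph does not merely assert that $\D(U)$ is \emph{isomorphic} to $\L_{n}T(X)$, it exhibits $\D(U)$ \emph{as} the Hopf-formula quotient, and the Hopf formula theorem in~\cite{EGVdL} identifies this quotient with $\H_{n}T(P)$ in a way that is compatible with the torsor correspondence of~\cite{RVdL2}. Once this compatibility is made explicit, $g=1$ follows and the paper's one-liner is justified; your proposal to trace through these identifications is the right completion, though you also stop short of actually carrying it out.
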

\begin{proof}
Recall from~\cite{EGVdL} that the centralisation $T_{n}F$ of $F$ is induced by dividing a certain object $T_{n}[F]$ out of the object~$F(n)$. (If $\R=\Ab(\X)$, then $T_{n}[F]$ is the join of the commutators in Definition~\ref{def:CentralExtension}, but in general this object is larger; see~\cite{Everaert-Gran-TT} for an explicit description in the general case.) As a consequence, the direction of~$T_{n}F$ is $\D(F)/T_{n}[F]$, the quotient of the direction of $F$ by this same object $T_{n}[F]$. The restriction of $T_{n}F$ to the commutator $[T_{n}F,T_{n}F]_{\R}$ restricts the direction to
\[
\frac{\D(F)\cap [F(n),F(n)]_{\R}}{T_{n}[F]},
\]
which is precisely the Hopf formula for $\L_{n}T(X)$; see Theorem~8.1 in~\cite{EGVdL}.

Concerning the second part of the statement, the needed inverse to
\[
\upsilon^{U}\from \HomFnctr{\R}{ \L_{n}T(X) }{ - }\To \Ext^{n}(X,-)|_{\R}\from \R \longrightarrow \AbGrps
\]
is determined by the weak universality of $U$.
\end{proof}

Thus we proved:

\begin{theorem}
Suppose $\R$ is an abelian subvariety of semiabelian variety $\X$ satisfying the \emph{Smith is Huq} condition~\ref{SH}. Given an $(n-1)$-acyclic object $X$, its universal $n$-fold central extension $[U]$ may be constructed as follows:
\begin{enumerate}
\item consider an $n$-fold presentation $F$ of $X$;
\item centralise $F$ to obtain the weakly initial $n$-cubic central extension $T_{n}F$;
\item take commutators pointwise to obtain the (weakly initial) $n$-cubic central extension \LM [T_{n}F,T_{n}F]_{\R}\Rm;
\item take kernels to obtain a \LM \langle 3\rangle ^n\Rm-diagram $U$, which represents an $n$-fold universal central cohomology class $[U]$.
\end{enumerate}
\end{theorem}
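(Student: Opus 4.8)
The plan is to assemble this statement from the chain of results just established, checking only that the four construction steps produce exactly what is claimed. Steps (1) and (2) are essentially a recollection of constructions from~\cite{EGVdL}, as recalled in Remark~\ref{Rem:CubicalViewpoint}: an $n$-fold presentation $F$ of $X$ is obtained by truncating a simplicial projective resolution $P\to X$ in position $n-1$ and passing to the associated $n$-cube of regular epimorphisms with projective domains; being built from projectives, $F$ is weakly initial in the category of $n$-cubic extensions over $X$. Applying the centralisation functor $T_n$ then yields the $n$-cubic central extension $T_nF$, and since centralisation is a reflection, weak initiality of $F$ passes to weak initiality of $T_nF$ among $n$-cubic central extensions over $X$.

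For step (3), the crucial input is Lemma~\ref{Key Lemma}, which tells us that $[T_nF,T_nF]_\R$ is again an $n$-cubic extension. Granting this, it is a sub--cubic extension of $T_nF$, hence central, and it inherits weak initiality, as recorded in the proposition asserting that $U$ is weakly initial. For step (4) I would take pointwise kernels to obtain the $\langle 3\rangle^n$-diagram $U$: the same proposition shows $U$ is weakly initial among all $n$-fold central extensions over $X$, and $(n-1)$-acyclicity of $X$ enters precisely through the fact that $T_nF\to T(T_nF)=T(F)$ is an $n$-cubic extension over $0$. The proposition computing directions then identifies the direction of $U$ with $\LDrvd{T}{n}{X}$ via the Hopf-formula description of the centralising object $T_n[F]$. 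Finally, weak universality of $U$ supplies the inverse to $\upsilon^{U}\from\HomFnctr{\R}{\LDrvd{T}{n}{X}}{-}\To\Ext^n(X,-)|_\R$, so that $[U]$ is a universal $\R$-central $n$-extension class in the sense defined before Theorem~\ref{thm:UCEsExst}. Reading these facts off in order reproduces the four bullet points verbatim.

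The genuine obstacle is Lemma~\ref{Key Lemma}; everything else is bookkeeping. Its proof reduces, via Proposition~3.9 of~\cite{EGVdL}, to identifying $[C,C]_\R$ as the kernel of $C\to TC$ for an $n$-presentation $C$, together with the proposition asserting that $C\to TC$ is an $(n+1)$-cubic extension over $0$. That proposition is the technical heart: one argues by induction on $n$, using sequential right exactness of $T$ and the regular-pushout property of semiabelian categories in the base case, and in the inductive step combining the strong $\mathcal{E}$-Birkhoff property (so $C\to T_nC$ is $(n+1)$-cubic), the Hopf-formula fact that $n$-acyclicity forces $T_nC$ to be the trivialisation of $C$ (so that the induction hypothesis applies to $T_nC\to TC$), and closure of cubic extensions under composition. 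Verifying that the acyclicity hypotheses propagate coherently through this tower is where the care is needed; once that is in place, the theorem itself is immediate.
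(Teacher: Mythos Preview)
Your proposal is correct and follows precisely the paper's approach: the theorem is stated after ``Thus we proved:'' and has no separate proof, being assembled from the preceding chain of results (the Key Lemma, the proposition on $C\to TC$, the weak-initiality proposition, and the direction computation via the Hopf formula), which is exactly what you do. Your identification of the Key Lemma and its supporting proposition as the technical heart, with the inductive argument combining the strong $\mathcal{E}$-Birkhoff property, the trivialisation characterisation of acyclicity, and closure under composition, matches the paper's development step for step.
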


\subsection{Comparison With the Standard Definition}\label{Classical}
Universal central (one-fold) extensions are usually defined as initial objects amongst all central extensions of a given (necessarily perfect) object---see~\cite{CVdL, Gran-VdL} for an account in the context of semi-abelian categories. Let us explain why universality as defined above coincides with this standard concept. We assume that $X$ is a perfect object in a semi-abelian category with enough projectives satisfying the condition (UCE)~\cite{CVdL, GrayVdL1} and $\R=\Ab(\X)$. Then indeed, given a central extension
\[
\xymatrix{0 \ar[r] & H \ar[r]^{h} & U \ar[r]^{u} & X \ar[r] & 0}
\]
both $\langle \eta_{U},u\rangle$ and $\langle0,u\rangle\colon U\to \ab(U)\times X$ induce maps $H\to \ab(U)$. Uniqueness of the induced map implies $\eta_{U} h=0$. Now $\eta_{U} h\colon H\to \ab(U)$ is a regular epimorphism by the long exact homology sequence and the fact that $X$ is perfect, so $\ab(U)=0$: that is, also $U$ is perfect as in~\cite[Lemma~5.5]{Milnor}. Perfectness of the middle object in a weakly universal central extension implies universality~\cite[Proposition~5.3]{CVdL}, so the result follows under the condition (UCE). As explained in~\cite{CVdL}, unlike what happens for groups~\cite{Milnor}, $2$-acyclicity of the middle object $U$ need not be sufficient for universality in general semiabelian categories.

\subsection{The Standard Definition Does Not Extend to Higher Degrees}
This standard approach to universal central extensions as initial objects does not naively extend to higher degrees. A quick argument goes as follows. Consider a double cubic central extension
\[
\xymatrix{X \ar[d] \ar[r] & C \ar[d]\\
D \ar[r] & P}
\]
 over some object $P$, and assume it is universal in the strict sense; i.e. it is an initial object amongst double cubic central extensions over $P$. Taking into account that any double cubic extension can be centralised, and that any two regular epimorphisms over $P$ give rise to some double cubic extension, we see that, say, $f\colon C\to P$ factors \emph{uniquely} over any given regular epimorphism. As a consequence, $C$, and hence also $P$, is trivial. To see this, it suffices to note that $\langle g,f\rangle\colon C\to A\times P$ can only be unique for any given $A$ if $C$ is an initial object, so $C=0$.

\appendix

\section{Homological Considerations}
\label{sec:HomologyBackground}%

The following diagrammatic view of chain complexes and homology is intended to reveal the completely elementary nature of the proofs of~\ref{Lemma-Yoneda-Commutes-Homology} and~\ref{Theorem-Yoneda-Connecting-Map}. As in Mac\,Lane~\cite[p.~259]{MacLane:Homology}, let \LM \ChnCat\RM be the category depicted thus:
$$
\xymatrix@R=5ex@C=3em{
\cdots \ar[r] &
 n+1 \ar[r]^-{d_{n+1}} \ar[rd]_{t_{n+1}} &
 n \ar[r]^-{d_n} \ar@<+2pt>[d]^{t_n} &
 n-1 \ar[r] &
 \cdots \\
&& z \ar@<+2pt>[u]^{i_n} \ar[ru]_{i_{n-1}}
}
$$
The object set is \LM \CPrdct{\ZNr}{\{ z\}}\Rm, and the morphisms are generated by the ones displayed (for each \LM n\in \ZNr\Rm), subject to the relations $t_ni_n = \IdMap{z}$ and $d_nd_{n+1} = i_{n-1}t_{n+1}$. The category of chain complexes in an abelian category $\A$ is isomorphic to the category of functors \LM \ChnCat \to \A\RM sending $z$ to $0$. If \LM C\from (\ChnCat,z)\to (\A,0)\RM is a chain complex, we write $d^{C}_{n}\from C_n \to C_{n-1}$ for $C(d_n)\from C(n) \to C(n-1)$. The category \LM \ChnCat\RM is self dual via the isomorphism determined by $\omega\from \ChnCat \to \ChnCat^{\op}$, $\omega(n) \DefEq -n$, $\omega(z)\DefEq z$.

\subsection{Homology}\label{subsec:Homology}%
Given a chain complex $C$ in $\A$, its homology in position $n$ is usually introduced via the cokernel construction below:
$$
\xymatrix@R=5ex@C=3em{
& \Ker{d^{C}_{n}} \ar@{-{ >>}}[r] \ar@{{ |>}->}[d] &
 \CoKer{\widetilde{d}^{C}_{n+1}} \EqDef \H_nC \EqDef \HmlgyCoKer{n}{C} \\
C_{n+1} \ar[r]_-{d^{C}_{n+1}} \ar[ru]^{\widetilde{d}^{C}_{n+1}} &
 C_n \ar[r]_-{d^{C}_{n}} &
 C_{n-1}
}
$$
In addition, we obtain a chain complex in \LM \A^{\op}\RM via the composite \LM D\DefEq C^{\op}\omega\Rm:
$$
\xymatrix@R=5ex@C=3em{
\ChnCat \ar[r]^-{\omega} &
 \ChnCat^{\op} \ar[r]^-{C^{\op}} &
 \A^{\op}
}
$$
Then \LM \HmlgyCoKer{-n}{D}\RM is given by the diagram on the left below, which resolves to the diagram on the right.
\[
\resizebox{\textwidth}{!}{
\xymatrix@R=5ex@C=2em{
& \Ker{(d^{C}_{n+1})^{\op}} \ar@{-{ >>}}[r] \ar@{{ |>}->}[d] &
 \HmlgyCoKer{n}{C^{\op}} = \HmlgyCoKer{-n}{D} \\
C_{n-1} \ar[r]_-{(d^{C}_{n})^{\op}} \ar[ru]^{\widetilde{({d}^{C}_{n})^{\op}}} &
 C_n \ar[r]_-{(d^{C}_{n+1})^{\op}} &
 C_{n+1}
}\quad
\xymatrix@R=5ex@C=2em{
& \CoKer{d^{C}_{n+1}} \ar[ld]_{\overline{d}^{C}_{n}} &
 \HmlgyCoKer{n}{C^{\op}} \EqDef \HmlgyKer{n}{C} \ar@{{ |>}->}[l] \\
C_{n-1} &
 C_n \ar[l]^-{d^{C}_{n}} \ar@{-{ >>}}[u] &
 C_{n+1} \ar[l]^-{d^{C}_{n+1}}
}}
\]
Thus the cokernel construction of the homology on \LM C^{\op}\RM is a kernel construction of a homology object of $C$. Via the snake lemma applied to the diagram below, we see that these two constructions are naturally isomorphic.
\[
{\xymatrix@R=5ex@C=3em{
C_{n+1} \ar[r]^{d^{C}_{n+1}} \ar[d]_{\widetilde{d}^{C}_{n+1}} &
 C_n \ar@{=}[d] \ar@{-{ >>}}[r] &
 \CoKer{d^{C}_{n+1}} \ar[d]^{\overline{d}^{C}_{n}} \\
\Ker{d^{C}_n} \ar@{{ |>}->}[r] &
 C_n \ar[r]_{d^{C}_n} &
 C_{n-1}
}}
\]
Now consider a short exact sequence of chain complexes:
$$
\xymatrix@R=5ex@C=1.5em{
0 \ar[r] &
 A \ar@{{ |>}->}[rr] &&
 B \ar@{-{ >>}}[rr] &&
 C \ar[r] &
0
}
$$
There is an associated long exact sequence of kernel constructed homology objects. We explain the construction of the connecting morphism \LM \partial^{\mathrm{k}}_{n}\from \HmlgyKer{n+1}{C} \to \HmlgyKer{n}{B}\RM via the diagram below.
\[
\xymatrix@R=5ex@C=2em{
& A_{n+2} \ar@{{ |>}->}[r] \ar[d] &
 B_{n+2} \ar@{-{ >>}}[r] \ar[d] &
 C_{n+2} \ar[d]_{d^{C}_{n+2}} &
 \HmlgyKer{n+1}{C} =\Ker{\overline{d}^{C}_{n+1}} \ar@{{ |>}->}[d] \\
\HmlgyKer{n}{A} \ar@{{ |>}->}[d] &
 A_{n+1} \ar@{{ |>}->}[r] \ar[d]^{d^{A}_{n+1}} &
 B_{n+1} \ar@{-{ >>}}[r] \ar[d] &
 C_{n+1} \ar@{-{ >>}}[r] \ar[d]_{d^{C}_{n+1}} &
 \CoKer{d^{C}_{n+2}} \ar[ld]^{\overline{d}^{C}_{n+1}} \\
\CoKer{d^{A}_{n+1}} \ar[rd]_{\overline{d}^{A}_{n}} &
 A_n \ar[d]^{d^{A}_{n}} \ar@{{ |>}->}[r] \ar[d] \ar@{-{ >>}}[l] &
 B_n \ar@{-{ >>}}[r] \ar[d] &
 C_n & \\
& A_{n-1} \ar@{{ |>}->}[r] &
 B_{n-1}
}
\]
\begin{enumerate}[(1)]
\item Form the pullback
$$
\xymatrix@R=5ex@C=3em{
X_{n+1} \pullback \ar@{-{ >>}}[rr] \ar[d] &&
 \HmlgyKer{n+1}{C} \ar@{{ |>}->}[d] \\
B_{n+1} \ar@{-{ >>}}[r] &
 C_{n+1} \ar@{-{ >>}}[r] &
 \CoKer{d^{C}_{n+2} }
}
$$
\item The composite \LM X_{n+1} \to B_{n+1} \to B_n\to C_{n}\RM is $0$. So \LM {X_{n+1} \to B_{n+1} \to B_n}\RM lifts uniquely to an arrow \LM X_{n+1}\to A_{n}\Rm, and composes to \LM {X_{n+1}\to \CoKer{d^{A}_{n+1} }}\Rm.
\item The composite \LM X_{n+1}\to \CoKer{d^{A}_{n+1}}\to A_{n-1}\RM vanishes. So the morphism \LM X_{n+1}\to \CoKer{d^{A}_{n+1}}\RM lifts uniquely to a morphism \LM \overline{\partial}^{\mathrm{k}}_{n+1}\from X_{n+1}\to \HmlgyKer{n}{A}\Rm.
\item Let \LM K\DefEq \Ker{X_{n+1}\to \HmlgyKer{n+1}{C}} = \Ker{ B_{n+1}\to \CoKer{d^{C}_{n+2}} }\Rm. We claim that
the composite \LM K \to X_{n+1} \to \HmlgyKer{n}{A}\RM vanishes. Indeed, the canonical morphisms \LM A_{n+1}\to X_{n+1}\RM and \LM B_{n+2}\to X_{n+1}\RM lift to $K$ and conspire to form an epimorphism \LM A_{n+1}\oplus B_{n+2} \to K\Rm. Composing this arrow with \LM \overline{\partial}^{\mathrm{k}}_{n+1}\RM gives zero. Hence \Lm \overline{\partial}^{\mathrm{k}}_{n+1}\RM factors uniquely through \LM \HmlgyKer{n+1}{C}\RM to give \LM \partial^{\mathrm{k}}_{n+1}\from {\HmlgyKer{n+1}{C}\to \HmlgyKer{n}{A}}\Rm.
\end{enumerate}

Essential in the construction of \LM \partial^{\mathrm{k}}_{n+1}\RM is that it only involves a certain pullback, combined with unique lifting/factorisation situations. Therefore dualisation turns the chain complexes into cochain complexes, the kernel construction of homology objects into the cokernel construction of cohomology objects, and the connecting morphism \LM \delta_{\mathrm{c}}^{n+1}\from \CoHmlgyCoKer{n}{A^{\op}}\to \CoHmlgyCoKer{n+1}{C^{\op}}\RM now relies on a certain pushout, combined with unique factorisation/lifting situations. In the proof of Theorem~\ref{Theorem-Yoneda-Connecting-Map}, cochain complexes of Hom-functors assume the role of the dual complexes of $A$, $B$, and~$C$. Applying now \LM \NatTrafos{\cdot}{T}\RM just reverses the dualisation procedure by which we obtained \LM \delta_{\mathrm{c}}^{n+1}\Rm, and yields the connecting morphism \LM \partial^{\mathrm{k}}_{n+1}\RM of the short exact sequence of chain complexes \LM 0\to TA\to TB\to TC\to 0\Rm. This is the reason why the Yoneda-homology commutation is compatible with the connecting morphism.

The following lemma is needed in Section~\ref{sec:UCEs}.

\begin{lemma}
\label{lem:AbelianUCT}
In an abelian category $\A$ with enough projectives, fix $n\geq 1$ and consider a bounded-below positionwise projective chain complex
$$
\xymatrix@R=5ex@C=3em{
\cdots \to C_{n+1} \ar[r]^-{d_{n+1}} &
 C_n \ar[r]^-{d_n} &
 C_{n-1} \to \cdots \to C_0\to 0
}
$$
with \LM \H_iC=0\Rm, whenever \LM 0\leq i\leq n-1\Rm. Then for any $A$ in $\A$:
\begin{enumerate}
\item \Lm \H^n(C,A) \cong \HomFnctr{\A}{\H_nC }{ A }\Rm;
\item \Lm \H^i(C,A) = 0\RM for \LM 0\leq i\leq n-1\Rm.
\end{enumerate}
\end{lemma}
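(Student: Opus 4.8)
The plan is to split off the ``low-degree part'' of $C$ as a contractible direct summand and then read the cohomology off what remains. First I would set $Z_n\DefEq\Ker{d_n}$, $B_n\DefEq\im(d_{n+1})$ (so that $\H_nC=Z_n/B_n$), and let $W_n\DefEq\im(d_n)\subseteq C_{n-1}$, noting $C_n/Z_n\cong W_n$. The hypothesis $\H_iC=0$ for $0\leq i\leq n-1$ says exactly that
\[
0\to W_n\to C_{n-1}\xrightarrow{d_{n-1}}C_{n-2}\to\cdots\to C_0\to 0
\]
is exact. Since $C_0,\dots,C_{n-1}$ are projective, the epimorphism $C_1\twoheadrightarrow C_0$ splits off $C_0$ as a direct summand, leaving a shorter exact sequence of the same type; iterating, this sequence is split exact. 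In particular $W_n$ is projective, and the bounded complex $Q\DefEq(0\to W_n\to C_{n-1}\to\cdots\to C_0\to 0)$, placed in degrees $n,n-1,\dots,0$, is contractible.

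Next, because $W_n\cong C_n/Z_n$ is projective, the epimorphism $C_n\twoheadrightarrow W_n$ splits, giving $C_n\cong Z_n\oplus W_n$. The differential $d_n$ vanishes on $Z_n$ and restricts on the $W_n$-summand to the inclusion $W_n\hookrightarrow C_{n-1}$, while $\im(d_{n+1})\subseteq\Ker{d_n}=Z_n$ forces $d_{n+1}$ to factor through the $Z_n$-summand; so the splitting is compatible with the differentials and yields an isomorphism of chain complexes $C\cong S\oplus Q$, where $S$ is the complex $\cdots\to C_{n+1}\to Z_n\to 0\to\cdots$ concentrated in degrees $\geq n$ (with $C_{n+1}\to Z_n$ the corestriction of $d_{n+1}$), and $Q$ is as above. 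Applying the additive functor $\hom_{\A}(-,A)$ gives $\hom_{\A}(C,A)\cong\hom_{\A}(S,A)\oplus\hom_{\A}(Q,A)$; the second summand is contractible, hence acyclic, so $\H^i(C,A)\cong\H^i(\hom_{\A}(S,A))$ for every $i$. Finally $\hom_{\A}(S,A)$ is the cochain complex $0\to\hom_{\A}(Z_n,A)\to\hom_{\A}(C_{n+1},A)\to\cdots$ living in cohomological degrees $\geq n$, so it has no cohomology below degree $n$ --- which is~(2) --- while its cohomology in degree $n$ is the kernel of precomposition with the corestriction $C_{n+1}\to Z_n$, i.e.\ the morphisms $Z_n\to A$ vanishing on $\im(d_{n+1})=B_n$, namely $\hom_{\A}(Z_n/B_n,A)=\hom_{\A}(\H_nC,A)$ --- which is~(1).

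The one step with genuine content is the splitting of the truncated exact complex $0\to W_n\to C_{n-1}\to\cdots\to C_0\to 0$: this is where both the projectivity of the $C_i$ and the acyclicity $\H_iC=0$ ($i<n$) are used, and it is what upgrades $Q$ from merely exact to contractible --- mere exactness would not survive the left exact contravariant functor $\hom_{\A}(-,A)$. Everything afterwards is formal manipulation of direct sums and the definition of (co)homology.
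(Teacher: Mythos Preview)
Your proof is correct and follows essentially the same route as the paper's. The paper is terser: it notes that $K\DefEq\Ker{d_{n-1}}$ (your $W_n$) is ``syzygied to~$0$'' via the exact tail $K\hookrightarrow C_{n-1}\to\cdots\to C_0\to 0$, hence projective, and then leaves part~(i) to ``an elementary computation'' and deduces part~(ii) from $\H^i(C,A)\cong\ExtFnctr{i}{0}{A}=0$. Your direct-sum decomposition $C\cong S\oplus Q$ with $Q$ contractible is precisely that elementary computation made explicit, and your derivation of~(ii) is the same dimension-shift read off from the decomposition rather than from the $\Ext$ identification.
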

\begin{proof}
(i)\quad Consider the exact sequence below:
$$
\xymatrix@R=5ex@C=2em{
K\DefEq \Ker{d_{n-1}} \ar@{{ |>}->}[r] &
 C_{n-1} \ar[r] &
 \cdots \ar[r] &
 C_0 \ar@{-{ >>}}[r] &
 0
}
$$
The key is that, omitting $K$, we are left with a segment of a projective resolution of the $0$-object, and so $K$ is ``syzygied'' to~$0$. This implies the claim with an elementary computation. %
 --- Part (ii) follows because, for  \LM 0\leq i\leq n-1\Rm,
\[
\H^i(C,A) \cong \Ext^i(0,A)\cong 0.\qedhere
\]
\end{proof}

\subsection*{Added in proof}
At the 2015 Categorical Algebra workshop in Gargnano, Italy, George Janelidze informed us of his perception that the homological Yoneda isomorphism should really be seen as the  \LM \AbGrps\Rm-enriched Yoneda isomorphism plus `something else' about the category of functors  \LM \A\to \AbGrps\Rm. This perception turned out to be fruitful, as it resulted in the following new insights:
\begin{enumerate}[(i)]
\item Right exact functors are injective relative to the kernels of maps between representable functors  \LM \HomFnctr{\A}{ X }{ - }\Rm.
\item In the setting of Lemma \ref{Lemma-Yoneda-Commutes-Homology}, the homological Yoneda-isomorphism follows then with this computation:
$$
\H_nTC \cong \H_n\NatTrafos{ \HomFnctr{\A}{ C }{ - } }{T} \cong \NatTrafos{ \H^n \HomFnctr{\A}{ C }{ - } }{T}\ .
$$
\end{enumerate}
We observe further \cite[7.11]{Freyd} that globally injective functors are always right exact. However, \cite[p.~Chap. 7]{Freyd}, the converse is not true as there are right exact functors, and even exact ones, which are not globally injective.

%\bibliography{tim}

\begin{thebibliography}{10}

\bibitem{Barr}
M.~Barr, \emph{Exact categories}, Exact categories and categories of sheaves,
  Lecture Notes in Math., vol. 236, Springer, 1971, pp.~1--120.

\bibitem{Barr-Beck}
M.~Barr and J.~Beck, \emph{Homology and standard constructions}, Seminar on
  triples and categorical homology theory ({ETH}, {Z\"u}rich, 1966/67), Lecture
  Notes in Math., vol.~80, Springer, 1969, pp.~245--335.

\bibitem{Borceux-Semiab}
F.~Borceux, \emph{A survey of semi-abelian categories}, {G}alois Theory, {H}opf
  Algebras, and Semiabelian Categories (G.~Janelidze, B.~Pareigis, and
  W.~Tholen, eds.), Fields Inst. Commun., vol.~43, Amer. Math. Soc., 2004,
  pp.~27--60.

\bibitem{Borceux-Bourn}
F.~Borceux and D.~Bourn, \emph{Mal'cev, protomodular, homological and
  semi-abelian categories}, Math. Appl., vol. 566, Kluwer Acad. Publ., 2004.

\bibitem{Bourn1991}
D.~Bourn, \emph{Normalization equivalence, kernel equivalence and affine
  categories}, Category {T}heory, {P}roceedings {C}omo 1990 (A.~Carboni, M.~C.
  Pedicchio, and G.~Rosolini, eds.), Lecture Notes in Math., vol. 1488,
  Springer, 1991, pp.~43--62.

\bibitem{Bourn2001}
D.~Bourn, \emph{{$3\times 3$} {L}emma and protomodularity}, J.~Algebra
  \textbf{236} (2001), 778--795.

\bibitem{Bourn-Gran-CategoricalFoundations}
D.~Bourn and M.~Gran, \emph{Regular, protomodular, and abelian categories},
  {C}ategorical Foundations: Special Topics in Order, Topology, Algebra and
  Sheaf Theory (M.~C. Pedicchio and W.~Tholen, eds.), Encyclopedia of Math.
  Appl., vol.~97, Cambridge Univ. Press, 2004, pp.~165--211.

\bibitem{Bourn-Gran-Torsion}
D.~Bourn and M.~Gran, \emph{Torsion theories in homological categories}, J.~Algebra
  \textbf{305} (2006), 18--47.

\bibitem{Brown-Ellis}
R.~Brown and G.~J. Ellis, \emph{Hopf formulae for the higher homology of a
  group}, Bull. Lond. Math. Soc. \textbf{20} (1988), no.~2, 124--128.

\bibitem{Buchsbaum:ExactCats}
D.~A. Buchsbaum, \emph{Exact categories and duality}, Trans. Amer. Math. Soc.
  \textbf{80} (1955), no.~1, 1--34.

\bibitem{CVdL}
J.~M. Casas and T.~Van~der Linden, \emph{Universal central extensions in
  semi-abelian categories}, Appl.\ Categ.\ Structures \textbf{22} (2014),
  no.~1, 253--268.

\bibitem{Donadze-Inassaridze-Porter}
G.~Donadze, N.~Inassaridze, and T.~Porter, \emph{{$n$-F}old {{\v C}}ech derived
  functors and generalised {H}opf type formulas}, K-Theory \textbf{35} (2005),
  no.~3--4, 341--373.

\bibitem{Tomasthesis}
T.~Everaert, \emph{An approach to non-abelian homology based on {C}ategorical
  {G}alois {T}heory}, Ph.D. thesis, Vrije Universiteit Brussel, 2007.

\bibitem{EverHopf}
T.~Everaert, \emph{Higher central extensions and {H}opf formulae}, J.~Algebra
  \textbf{324} (2010), 1771--1789.

\bibitem{EGoeVdL}
T.~Everaert, J.~Goedecke, and T.~Van~der Linden, \emph{Resolutions, higher
  extensions and the relative {M}al'tsev axiom}, J.~Algebra \textbf{371}
  (2012), 132--155.

\bibitem{Everaert-Gran-TT}
T.~Everaert and M.~Gran, \emph{Protoadditive functors, derived torsion theories
  and homology}, J.~Pure Appl.\ Algebra \textbf{219} (2015), no.~8, 3629--3676.

\bibitem{EGVdL}
T.~Everaert, M.~Gran, and T.~Van~der Linden, \emph{Higher {H}opf formulae for
  homology via {G}alois {T}heory}, Adv.~Math. \textbf{217} (2008), no.~5,
  2231--2267.

\bibitem{Fiore}
T.~M. Fiore, \emph{Pseudo limits, biadjoints, and pseudo algebras: categorical
  foundations of conformal field theory}, Mem.\ Amer.\ Math.\ Soc., vol. 182,
  Amer. Math. Soc., Providence, 2006.

\bibitem{Freyd}
P.~Freyd, \emph{Abelian categories}, Harper and Row, New York, 1964,
  Republished in: {\emph{Reprints in Theory and Applications of Categories},
  no.~3 (2003)}.

\bibitem{Gerstenhaber}
M.~Gerstenhaber, \emph{A categorical setting for the {B}aer extension theory},
  Applications of Categorical Algebra, New York 1968, Proc. Sympos. Pure Math.,
  vol. XVII, Amer. Math. Soc., Providence, R.I., 1970, pp.~50--64.

\bibitem{GVdL2}
J.~Goedecke and T.~Van~der Linden, \emph{On satellites in semi-abelian
  categories: Homology without projectives}, Math. Proc. Cambridge Philos. Soc.
  \textbf{147} (2009), no.~3, 629--657.

\bibitem{Gran-VdL}
M.~Gran and T.~Van~der Linden, \emph{On the second cohomology group in
  semi-abelian categories}, J.~Pure Appl.\ Algebra \textbf{212} (2008),
  636--651.

\bibitem{GrayVdL1}
J.~R.~A. Gray and T.~Van~der Linden, \emph{Peri-abelian categories and the
  universal central extension condition}, J. Pure Appl. Algebra \textbf{219}
  (2015), no.~7, 2506--2520.

\bibitem{Tohoku}
A.~Grothendieck, \emph{Sur quelques points d'alg{\`e}bre homologique}, Tohoku
  J. Math. \textbf{9} (1957), 119--221.

\bibitem{Guitart-Bril}
R.~Guitart and L.~Van~den Bril, \emph{Calcul des satellites et
  pr{\'e}sentations des bimodules {\`a} l'aide des carr{\'e}s exacts, {I} and
  {II}}, Cah. Topol. G{\'e}om. Diff{\'e}r. Cat{\'e}g. \textbf{XXIV} (1983),
  no.~3 and 4, 299--330 and 333--369.

\bibitem{Hilton-Rees}
P.~J. Hilton and D.~Rees, \emph{Natural maps of extension functors and a
  theorem of {R.~G.~Swan}}, Math. Proc. Cambridge Philos. Soc. \textbf{57}
  (1961), 489--502.

\bibitem{Hilton-Stammbach}
P.~J. Hilton and U.~Stammbach, \emph{A course in homological algebra}, second
  ed., Grad. Texts in Math., vol.~4, Springer, 1971.

\bibitem{Holt}
D.~Holt, \emph{An interpretation of the cohomology groups {$H^{n}(G,M)$}},
  J.~Algebra \textbf{60} (1979), 307--318.

\bibitem{Huebschmann}
J.~Huebschmann, \emph{Crossed $n$-fold extensions of groups and cohomology},
  Comment. Math. Helv. \textbf{55} (1980), 302--314.

\bibitem{Huq}
S.~A. Huq, \emph{Commutator, nilpotency and solvability in categories}, Q. J.
  Math. \textbf{19} (1968), no.~2, 363--389.

\bibitem{Janelidze-Satellites}
G.~Janelidze, \emph{On satellites in arbitrary categories}, Bull. Acad. Sci.
  Georgian SSR \textbf{82} (1976), no.~3, 529--532, in Russian, English
  translation \texttt{arXiv:0809.1504v1}.

\bibitem{Janelidze:Double}
G.~Janelidze, \emph{What is a double central extension? ({T}he question was asked by
  {R}onald {B}rown)}, Cah. Topol. G{\'e}om. Differ. Cat{\'e}g. \textbf{XXXII}
  (1991), no.~3, 191--201.

\bibitem{Janelidze:Hopf-talk}
G.~Janelidze, \emph{Higher dimensional central extensions: A categorical approach to
  homology theory of groups}, Lecture at the International Category Theory
  Meeting CT95, Halifax, 1995.

\bibitem{Janelidze-Kelly}
G.~Janelidze and G.~M. Kelly, \emph{Galois theory and a general notion of
  central extension}, J.~Pure Appl. Algebra \textbf{97} (1994), no.~2,
  135--161.

\bibitem{Janelidze-Marki-Tholen}
G.~Janelidze, L.~M{\'a}rki, and W.~Tholen, \emph{Semi-abelian categories},
  J.~Pure Appl. Algebra \textbf{168} (2002), no.~2--3, 367--386.

\bibitem{Kelly:Enriched}
G.~M. Kelly, \emph{Basic concepts of enriched category theory}, London
  Mathematical Society Lecture Notes Series, vol.~64, Cambridge University
  Press, 1982, Republished in: {\emph{Reprints in Theory and Applications of
  Categories}, no.~10 (2005)}.

\bibitem{MacLane:Duality}
S.~{Mac\,Lane}, \emph{Duality for groups}, Bull. Amer. Math. Soc. \textbf{56}
  (1950), 485--516.

\bibitem{MacLane:Homology}
S.~{Mac\,Lane}, \emph{Homology}, Grundlehren math. Wiss., vol. 114, Springer, 1967.

\bibitem{MacLane}
S.~{Mac\,Lane}, \emph{Categories for the working mathematician}, second ed., Grad.
  Texts in Math., vol.~5, Springer, 1998.

\bibitem{MM-NC}
S.~Mantovani and G.~Metere, \emph{Normalities and commutators}, J.~Algebra
  \textbf{324} (2010), no.~9, 2568--2588.

\bibitem{Milnor}
J.~Milnor, \emph{Introduction to algebraic {K}-theory}, Princeton University
  Press, 1972.

\bibitem{Mitchell:Categories}
B.~Mitchell, \emph{Theory of categories}, Academic Press, 1965.

\bibitem{Orzech}
G.~Orzech, \emph{Obstruction theory in algebraic categories {I} and {II}},
  J.~Pure Appl. Algebra \textbf{2} (1972), 287--314 and 315--340.

\bibitem{Peschke-UE}
G.~Peschke, \emph{Universal extensions}, C. R. Math. Acad. Sci. Paris
  \textbf{349} (2011), 501--504.

\bibitem{Quillen}
D.~G. Quillen, \emph{Homotopical algebra}, Lecture Notes in Math., vol.~43,
  Springer, 1967.

\bibitem{RVdL3}
D.~Rodelo and T.~Van~der Linden, \emph{Higher central extensions via
  commutators}, Theory Appl. Categ. \textbf{27} (2012), no.~9, 189--209.

\bibitem{RVdL2}
D.~Rodelo and T.~Van~der Linden, \emph{Higher central extensions and cohomology}, preprint
  {\texttt{arXiv:1502.05282}}, 2015.

\bibitem{Smith}
J.~D.~H. Smith, \emph{{M}al'cev varieties}, Lecture Notes in Math., vol. 554,
  Springer, 1976.

\bibitem{VdLinden:Simp}
T.~Van~der Linden, \emph{Simplicial homotopy in semi-abelian categories},
  J.~K-Theory \textbf{4} (2009), no.~2, 379--390.

\bibitem{Weibel}
Ch.~A. Weibel, \emph{An introduction to homological algebra}, Cambridge Stud.
  Adv. Math., vol.~38, Cambridge Univ. Press, 1997.

\bibitem{Yoneda-Exact-Sequences}
N.~Yoneda, \emph{On {E}xt and exact sequences}, J. Fac. Sci. Univ. Tokyo
  \textbf{1} (1960), no.~8, 507--576.

\end{thebibliography}
%\bibliographystyle{amsplain}
%%\newpage
%% .bbl:

\providecommand{\noopsort}[1]{}
\providecommand{\bysame}{\leavevmode\hbox to3em{\hrulefill}\thinspace}
\providecommand{\MR}{\relax\ifhmode\unskip\space\fi MR }
% \MRhref is called by the amsart/book/proc definition of \MR.
\providecommand{\MRhref}[2]{%
  \href{http://www.ams.org/mathscinet-getitem?mr=#1}{#2}
}
\providecommand{\href}[2]{#2}

\end{document}